\definecolor{red}{rgb}{1,0,0}
\def\red{\color{red}}
\definecolor{blue}{rgb}{0,0,.7}
\definecolor{green}{rgb}{0,.6,0}
\definecolor{purp}{rgb}{.5,0,.5}
\numberwithin{figure}{section}   
\newtheorem{thm}{Theorem}[section]
\newtheorem{cor}[thm]{Corollary}
\newtheorem{lem}[thm]{Lemma}
\newtheorem{prop}[thm]{Proposition}
\theoremstyle{definition}
\newtheorem{rem}[thm]{Remark}
\theoremstyle{definition}
\theoremstyle{definition}
\newtheorem{ex}[thm]{Example}
\newcommand{\Z}{\operatorname{Z}}
\newcommand{\ZFS}{\operatorname{ZFS}}
\newcommand{\zfg}{\mathscr{Z}}
\newcommand{\PCN}{\operatorname{P}}
\newcommand{\PC}{\mathcal{P}}
\newcommand{\C}{\mathcal{C}}
\newcommand{\clf}{\mathscr{F}}
\newcommand{\dist}{\operatorname{dist}}
\newcommand{\x}{\times}
\newcommand{\wt}{\widetilde}
\newcommand{\bit}{\begin{itemize}}
\newcommand{\eit}{\end{itemize}}
\newcommand{\ben}{\begin{enumerate}}
\newcommand{\een}{\end{enumerate}}
\newcommand{\beq}{\begin{equation}}
\newcommand{\eeq}{\end{equation}}
\newcommand{\bea}{\begin{eqnarray*}} 
\newcommand{\eea}{\end{eqnarray*}}
\newcommand{\bpf}{\begin{proof}}
\newcommand{\epf}{\end{proof}\ms}
\newcommand{\bmt}{\begin{bmatrix}}
\newcommand{\emt}{\end{bmatrix}}
\newcommand{\ms}{\medskip}
\newcommand{\cp}{\, \Box\,}
\newcommand{\lc}{\left\lceil}
\newcommand{\rc}{\right\rceil}
\newcommand{\noi}{\noindent}
\title{Reconfiguration graphs of zero forcing sets}
\author{Jesse Geneson\thanks{Department of Mathematics and Statistics, San Jos\'e State University, One Washington Square, San Jos\'e, CA 95192, USA
(geneson@gmail.com).} \and Ruth Haas\thanks{Department of Mathematics, University of Hawai'i at M\=anoa, 2565 McCarthy Mall, Honolulu, HI 96822, USA (rhaas@math.hawaii.edu)}\and Leslie Hogben\thanks{Department of Mathematics, Iowa State University,
Ames, IA 50011, USA and American Institute of Mathematics, 600 E. Brokaw Road, San Jos\'e, CA 95112, USA
(hogben@aimath.org).}}
\begin{document}
\maketitle\vspace{-20pt}

\begin{abstract}
This paper begins the study of reconfiguration of zero forcing sets, and more specifically,  the zero forcing graph. Given a base graph $G$,   its zero forcing graph,  $\zfg(G)$,  is  the graph whose vertices are the minimum zero forcing sets of $G$ with an edge between vertices $B$ and $B'$ of $\zfg(G)$ if and only if $B$ can be obtained from $B'$ by changing a single vertex of $G$.  It is shown that the zero forcing graph of a forest is connected, but that many zero forcing graphs are disconnected. We characterize the  base graphs whose zero forcing graphs are either a path or the complete graph, and show that the star cannot be a zero forcing graph.   We show that  computing $\zfg(G)$ takes $2^{\Theta(n)}$ operations in the worst case for a graph $G$ of order $n$. \end{abstract}

\noi {\bf Keywords} reconfiguration, zero forcing, zero forcing graph

\noi{\bf AMS subject classification} 68R10, 05C50, 05C57 

\section{Introduction}

Reconfiguration is concerned with relationships among solutions to a problem instance.   Reconfiguration of one  feasible solution into another is accomplished through a sequence of steps, where each step follows a reconfiguration rule, and  such that each intermediate solution is also feasible. The reconfiguration graph is the set of all feasible solutions to the problem with an edge between two solutions if one solution 
can be obtained from the other  by one application of the reconfiguration rule. Research in reconfiguration  addresses both structural questions and algorithmic ones.

One of the most well studied reconfiguration scenarios is  vertex coloring where all 
proper colorings  for a specific  graph are the feasible solutions and the reconfiguration rule  is to change the color on exactly one vertex.  The reconfiguration graph in this case is called the coloring graph and it naturally arises in 
theoretical physics when studying the Glauber dynamics of an anti-ferromagnetic Potts model at zero temperature 
\cite{dyer2006randomly, jerrum1995very, molloy2004glauber, vigoda2001improved}. Structural properties, such as 
when this graph  is connected or has a Hamilton cycle have been considered; see  for example, 
\cite{ cereceda2008connectedness,  choo2011gray}. In Beier et al.~\cite{Beier2016ClassifyingCG} the question of which graphs can be coloring graphs is addressed.
Variations of the coloring graph have been considered as well, including restricting to  only non-isomorphic colorings \cite{haas2011canonical},  or using a different coloring rule such as Kempe-chains \cite{mohar2007kempe}.

Several recent survey papers cover different aspects of reconfiguration. The   paper by  Nishimura \cite{Nishimura-survey} summarizes the state of understanding of algorithmic and complexity questions for a wide range of reconfiguration settings. 
A good overview of reconfiguration for   graph coloring problems  and dominating sets problems is given in  the recent paper of Mynhardt
and  Nasserasr
\cite{Mynhardt-survey}.

In this paper we begin the  study of reconfiguration for zero forcing sets.  Zero forcing is a coloring process on a graph that has seen much  recent attention  (see \cite{HLA2} and the references therein) in part because of its connections to linear algebra \cite{AIM},  power domination \cite{reuf15, Deanetal}, and control of quantum systems \cite{graphinfect}. The color change rule is: A blue vertex $u$ can change the color of a white vertex $w$ to blue if $w$ is the unique white neighbor of $u$.  Given a graph $G$, a subset of vertices $B\subseteq V(G)$ is a zero forcing set if when  $B$ is the initial set of blue vertices and the coloring rule is applied repeatedly,  all vertices are eventually colored blue.  The zero forcing number, denoted by $\Z(G)$, is the minimum of $|B|$ over all zero forcing sets $B \subseteq V(G)$. A  zero forcing set $B\subseteq V(G)$ such that $|B|=\Z(G)$ is a {\em minimum zero forcing set}.

We study reconfiguration among minimum zero forcing sets using a token jumping reconfiguration rule, that is, we move directly between a minimum zero forcing set $B$ to minimum forcing set   $B'$ if the symmetric difference of $B$ and  $B'$ contains exactly two elements.  The reconfiguration graph in this context will be called the {\em zero forcing graph} and denoted by $\zfg(G)$; the graph $G$ is called the {\em base graph}. 
Specifically,  given a base graph $G$, we define the  zero forcing graph $\zfg(G)$ of $G$ to be the graph whose vertices are the minimum zero forcing sets of $G$ with an edge between vertices $u$ and $v$ if and only if $u$ can be obtained from $v$ by changing a single vertex.

This paper primarily addresses the structural properties of the zero forcing graph. In Section \ref{s:disc}, we construct infinite families of graphs with disconnected zero forcing graphs. In Section \ref{s:spec}, we show that paths, cycles and complete graphs can be zero forcing graphs, but that stars can not. 
In Section \ref{s:forest} we show that the zero forcing graph of every forest is connected, and that $\zfg(T)$ is $\left\{C_3,C_4\right\}$-free if and only if $T$ is a path. 
While this paper briefly considers the complexity of computing  the zero forcing graph, we do not address the fundamental question  of the complexity of determining whether  two zero forcing sets are in the same connected component of $\zfg(G)$, other than when $G$ is a forest.

%
%
%
%
%
%
%
%
%

\section{Preliminaries}\label{s:intro}

In this section we  present some initial examples of the zero forcing graph for several families of base graphs and  then develop some useful tools based on previous results on zero forcing.  

We first review   some notation; most concepts and notation are standard and can be found in \cite{Diestel} or other common books on graph theory. All graphs are simple, undirected, and finite.   The minimum and maximum degrees of vertices in $G$ are denoted by $\delta(G)$ and $\Delta(G)$, respectively. The maximum order of a clique in $G$ is denote by $\omega(G)$.  The union of disjoint  sets $X$ and $Y$ is denoted by $X\sqcup Y$.  For any two  graphs $G_1$ and $G_2$, the disjoint union of $G_1$ and $G_2$ can be denoted by $G_1 \sqcup G_2$ (if necessary by renaming the vertices so that $V(G_1)$ and $V(G_2)$ are disjoint sets). A {\em leaf} is a vertex of degree one.  The {\em Cartesian product} of $G=(V,E)$ and $G'=(V',E')$, denoted by $G\cp G'$, is the graph with vertex set $V\x V'$ and  $(v,v')$ and $(u,u')$ are adjacent if and only if 
 $v=u$ and $\{v',u'\}\in E'$, or
 $v'=u'$ and $\{v,u\}\in E$.


\subsection{Zero forcing graphs of some common families of graphs}

\begin{rem}\label{r:zfg-path} Since the zero forcing number of a path is one and a path of order $n\ge 2$ has exactly two zero forcing sets (its endpoints), $\zfg(P_n)=K_2$. 
\end{rem}

\begin{prop}\label{p:cyc}
For all $n \geq 3$, $\zfg(C_n) = C_n$.
\end{prop}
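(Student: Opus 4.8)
I need to prove that for all $n \geq 3$, the zero forcing graph of the cycle $C_n$ is itself isomorphic to $C_n$.

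Let me recall the key facts:
- The zero forcing number $\Z(C_n) = 2$ for all $n \geq 3$.
- I need to find all minimum zero forcing sets (size 2), then determine the edges of $\zfg(C_n)$.

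**Finding the minimum zero forcing sets of $C_n$.**

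Label the vertices $v_1, v_2, \ldots, v_n$ around the cycle, with $v_i$ adjacent to $v_{i+1}$ (indices mod $n$).

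Each vertex has degree 2. For zero forcing to work: a blue vertex $u$ can force a white vertex $w$ only if $w$ is the unique white neighbor of $u$. Since every vertex has exactly 2 neighbors, a blue vertex can force only when exactly one of its two neighbors is still white.

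**Claim:** The minimum zero forcing sets of $C_n$ are exactly the sets of two adjacent vertices, i.e., $\{v_i, v_{i+1}\}$ for $i = 1, \ldots, n$.

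Let me verify:
- **Two adjacent vertices work:** Take $B = \{v_1, v_2\}$. Now $v_2$ has neighbors $v_1$ (blue) and $v_3$ (white), so $v_2$ forces $v_3$. Then $v_3$ forces $v_4$, etc. Similarly $v_1$ could force $v_n$. This propagates all the way around — yes, this forces everything.

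- **Two non-adjacent vertices fail:** Take $B = \{v_i, v_j\}$ with $v_i, v_j$ non-adjacent. Each blue vertex has two white neighbors initially, so neither can force. No forcing can happen — the set is stuck. So non-adjacent pairs are NOT zero forcing sets.

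- **Size 1 fails:** A single vertex $v_i$ has two white neighbors, so it can never force. Hence $\Z(C_n) \geq 2$, confirming $\Z(C_n) = 2$.

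So the minimum zero forcing sets are exactly $\{v_i, v_{i+1}\}$ for $i = 1, \ldots, n$, giving **exactly $n$ vertices** in $\zfg(C_n)$.

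**Determining the edges of $\zfg(C_n)$.**

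Two minimum zero forcing sets $B, B'$ are adjacent in $\zfg(C_n)$ iff their symmetric difference has exactly 2 elements (i.e., they differ by swapping one vertex).

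Let $B_i = \{v_i, v_{i+1}\}$ (indices mod $n$). When are $B_i$ and $B_j$ adjacent?

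$|B_i \triangle B_j| = 2$ means $|B_i \cap B_j| = 1$ (since both have size 2).

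$B_i = \{v_i, v_{i+1}\}$ and $B_j = \{v_j, v_{j+1}\}$ share exactly one vertex when:
- $j = i+1$: then $B_i = \{v_i, v_{i+1}\}$, $B_{i+1} = \{v_{i+1}, v_{i+2}\}$, sharing $v_{i+1}$. ✓
- $j = i-1$: then $B_{i-1} = \{v_{i-1}, v_i\}$, sharing $v_i$ with $B_i$. ✓

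If $|i - j| \geq 2$ (mod $n$), the two pairs of consecutive vertices share no vertex, so $|B_i \cap B_j| = 0$, giving $|B_i \triangle B_j| = 4 \neq 2$. Not adjacent.

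**Special small cases to check:** For $n = 3$, is there any issue? $B_1 = \{v_1, v_2\}$, $B_2 = \{v_2, v_3\}$, $B_3 = \{v_3, v_1\}$. Each pair shares exactly one vertex, so all three are mutually adjacent — this gives $C_3 = K_3$. ✓ (Note: for $n=3$, "adjacent" index differences wrap around so that $B_1$–$B_3$ corresponds to $|i-j| = 2 \equiv -1 \pmod 3$, which is a valid neighbor.)

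For $n = 4$: $B_1, B_2, B_3, B_4$. $B_1 \cap B_3 = \{v_1,v_2\} \cap \{v_3,v_4\} = \emptyset$. So $B_1$–$B_3$ not adjacent. We get a 4-cycle. ✓

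**Conclusion.**

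The map $B_i \mapsto i$ identifies the vertex set of $\zfg(C_n)$ with $\mathbb{Z}/n\mathbb{Z}$, and $B_i$ is adjacent to $B_j$ iff $j \equiv i \pm 1 \pmod n$. This is precisely the cycle $C_n$. Therefore $\zfg(C_n) \cong C_n$. $\blacksquare$

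---

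**Summary of proof structure:**
1. Show $\Z(C_n) = 2$ and identify minimum ZFSs as adjacent-vertex pairs $\{v_i, v_{i+1}\}$ (using degree-2 structure).
2. Count them: exactly $n$ such sets → $n$ vertices in $\zfg(C_n)$.
3. Analyze the token-jumping adjacency: $B_i \sim B_j \iff |B_i \cap B_j| = 1 \iff j = i \pm 1$.
4. Recognize the resulting graph as $C_n$.
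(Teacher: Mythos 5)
Your proof is correct and follows essentially the same approach as the paper: identify the minimum zero forcing sets as the $n$ pairs of adjacent cycle vertices and observe that two such pairs are adjacent in $\zfg(C_n)$ exactly when they share a vertex, yielding the cycle structure. The paper simply asserts these facts without the verification you supply, so your write-up is a more detailed version of the same argument.
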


\begin{proof}
Let the vertices of $C_n$ be labeled $v_0, \dots, v_{n-1}$, with $v_i$ adjacent to $v_{i-1}$ and $v_{i+1}$ for each $0 \leq i \leq n-1$ and perform index arithmetic mod $n$. Then the minimum zero forcing sets are $B_i = \left\{v_i, v_{i+1}\right\}$ for each $0 \leq i \leq n-1$, and $B_i$ is adjacent to only $B_{i-1}$ and $B_{i+1}$ in $\zfg(C_n)$, so $\zfg(C_n) = C_n$.
\end{proof}

\begin{prop}\label{p:Kn}
For all $n \geq 1$, $\zfg(K_n) = K_n$.
\end{prop}
\begin{proof}
The minimum zero forcing sets of $K_n$ are all possible subsets of $n-1$ vertices. There are $n$ of these subsets, and they are all connected in $\zfg(K_n)$, so $\zfg(K_n) = K_n$.
\end{proof}

Observe that cycles and complete graphs have the unusual property that $G = \zfg(G)$ (we say unusual because these are the only graphs that have this property as far as we know).

\begin{prop}\label{p:K1n}
For all $n \geq 1$, $\zfg(K_{1,n-1}) = K_{n-1}$.
\end{prop}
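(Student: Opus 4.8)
The plan is to characterize the minimum zero forcing sets of the star $K_{1,n-1}$ explicitly and then read off the adjacency structure of $\zfg(K_{1,n-1})$. Write $c$ for the center and $\ell_1,\dots,\ell_{n-1}$ for the leaves, and take $n\ge 3$ so that there are at least two leaves (the remaining small cases $n\le 2$ reduce to $K_1$ and $K_2$ and can be handled directly). First I would establish a single structural fact: in a star, a white leaf can only ever be forced by $c$, and $c$ can force a white leaf only when that leaf is its \emph{unique} white neighbor. Consequently, if two leaves are white at the start of the process then neither can be forced: as long as both remain white, $c$ has at least two white neighbors and is blocked, and no leaf can force another leaf. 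Hence every zero forcing set must already contain at least $n-2$ of the leaves.

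This fact pins everything down. On the one hand, for each $j$ the all-but-one-leaf set $B_j := \{\ell_1,\dots,\ell_{n-1}\}\setminus\{\ell_j\}$ is a zero forcing set of size $n-2$: any blue leaf forces $c$ (its only neighbor), after which $c$ has the single white neighbor $\ell_j$ and forces it. On the other hand, the structural fact shows any zero forcing set contains at least $n-2$ leaves, so $\Z(K_{1,n-1})=n-2$ and a \emph{minimum} zero forcing set has size exactly $n-2$. A size-$(n-2)$ set containing at least $n-2$ leaves must consist of exactly $n-2$ leaves and omit the center, i.e.\ it equals $B_j$ for some $j$ (a size-$(n-2)$ set containing $c$ would have only $n-3$ leaves, leaving two white leaves, which the structural fact forbids). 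Thus the vertices of $\zfg(K_{1,n-1})$ are exactly $B_1,\dots,B_{n-1}$, and there are $n-1$ of them.

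It then remains to check adjacency. For $i\ne j$ we have $B_i \triangle B_j = \{\ell_i,\ell_j\}$, a set of size two, so every pair $B_i,B_j$ is joined by an edge; since this holds for all pairs, the graph on $B_1,\dots,B_{n-1}$ is complete, giving $\zfg(K_{1,n-1})=K_{n-1}$. The only real content is the structural fact of the first paragraph — in effect, the assertion that the center never belongs to a minimum zero forcing set — and this is where I would be most careful; the counting and the adjacency check are then routine. An alternative route to the same characterization is via forcing chains: every maximal forcing chain in a star is a path on at most three vertices $\ell_i\,c\,\ell_j$, so a minimum collection of chains covers $c$ together with two leaves by one chain and each remaining leaf by a singleton chain, whose initial vertices are precisely the members of some $B_j$.
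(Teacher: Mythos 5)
Your proof is correct and takes essentially the same approach as the paper: both identify the minimum zero forcing sets of the star as exactly the $(n-2)$-element subsets of the leaves and then observe that any two such sets have symmetric difference of size two, so they are pairwise adjacent, giving $K_{n-1}$. The only difference is that the paper asserts this characterization without justification while you prove it via the structural fact about the center; one small caution is that the cases $n\le 2$ you defer are exactly where the statement as literally written breaks down (e.g.\ $\zfg(K_{1,1})=\zfg(K_2)=K_2$, not $K_1$), a defect shared by the paper's own proof.
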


\begin{proof}
The minimum zero forcing sets of $K_{1,n-1}$ are all possible subsets of $n-2$ leaf vertices. There are $n-1$ of these subsets, and they are all connected in $\zfg(K_{1,n-1})$, so $\zfg(K_{1,n-1}) = K_{n-1}$.
\end{proof}

\subsection{Tools from zero forcing and some implications for $\zfg (G)$}

Let $B$ be a  zero forcing set of a graph $G$.  
A {\em chronological list of forces} of $B$ is an ordered list  of  forces which when performed one at a time, color all vertices of the graph blue starting with the vertices in $B$ blue.  A {\em set of forces} is  the (unordered) set of all forces in a chronological list of forces.  Note that a chronological list of forces carries more information and several chronological lists of forces may have the same set of forces.   Furthermore, $B$ may have several sets of forces.  However, for any subset of vertices, the set of vertices that can be colored blue by repeated application of the color change rule does not depend on the forces chosen or their order.
Given a set of forces, a {\em forcing chain}  is a maximal  sequence of vertices $(v_1,v_2,\dots,v_k)$ such that for $i=1,\dots,k-1$,  $v_i \to v_{i+1}$. 
 A {\em reversal}  of $B$ is the set of last vertices of the zero forcing chains of a chronological list of forces of $B$, i.e., the set of vertices that do not perform forces.  Note that a given minimum zero forcing set may have more than one reversal  (because it may have more than one set of forces), and that every reversal of a zero forcing set is also a zero forcing set \cite{BBFHHVS}.

\begin{thm}\label{t:exclude}{\rm\cite{BBFHHVS}} (Exclusion Theorem) If $G$ is a connected graph of order two or more, then for every vertex $v$ of $G$ there exists a minimum zero forcing set that does not include  $v$. That is,
\[\cap\{B:B \mbox{ is a minimum zero forcing set}\}=\emptyset.\]
\end{thm}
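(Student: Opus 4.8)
The plan is to produce, for an arbitrary vertex $v$, a single minimum zero forcing set that omits $v$; the displayed equation then follows at once, since it asserts only that no vertex lies in all minimum zero forcing sets. The main tool will be the reversal operation described above: if $B$ is a minimum zero forcing set and $\mathcal{F}$ is a set of forces of $B$, then the forcing chains of $\mathcal{F}$ partition $V(G)$ into exactly $\Z(G)$ chains (one initiated by each vertex of $B$), the first vertices of the chains are exactly $B$, and the last vertices form a reversal $B'$ which is again a minimum zero forcing set. The key observation is that $v\in B'$ precisely when $v$ is the last vertex of its chain, so a vertex of $B$ that performs at least one force cannot lie in $B'$.

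First I would fix any minimum zero forcing set $B$. If $v\notin B$ we are done, so assume $v\in B$; then $v$ is the first vertex of its forcing chain. If, for some set of forces of $B$, the chain containing $v$ has length at least two---that is, $v$ performs a force---then $v$ is not the last vertex of its chain, so the corresponding reversal is a minimum zero forcing set not containing $v$, and we are done.

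The remaining, and genuinely delicate, case is when $v$ performs no force in the chosen process, so that $\{v\}$ is a chain by itself and $v$ lies in both $B$ and its reversal. Here I would reroute the forcing to manufacture a force out of $v$. Since $G$ is connected of order at least two, $v$ has a neighbor, and because every vertex is eventually colored blue, the neighbors of $v$ all turn blue during the process; let $w$ be the neighbor of $v$ colored blue last in the chronological list, and let $u\to w$ be the force that colors it. At the moment just before this force, $w$ is white while every other neighbor of $v$ is already blue, so $w$ is the unique white neighbor of the (permanently blue) vertex $v$; hence $v\to w$ is also a legal force at that instant. I would therefore replace the force $u\to w$ by $v\to w$, leaving all earlier forces unchanged and, crucially, all later forces unchanged as well: after this single step the set of blue vertices is identical to that in the original process, so every subsequent force remains valid and the whole graph is still colored. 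In this modified set of forces $v$ performs the force $v\to w$, so $v$ is not the last vertex of its chain, and the reversal of the modified process is a minimum zero forcing set omitting $v$.

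I expect the only real obstacle to be the verification in this last case that swapping $u\to w$ for $v\to w$ yields a bona fide set of forces for $B$---in particular that discarding the force $u\to w$ does not strand any vertex. This is handled by the observation that immediately after the swapped force the set of blue vertices coincides with the original set at the same stage, so the tail of the original forcing process can be reused verbatim.
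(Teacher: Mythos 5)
Your overall strategy---pass to a reversal, and when $v$ lies in both $B$ and its reversal, reroute a single force so that $v$ performs it---is sound, and it is the natural route to this result via the reversal machinery (note that the paper itself gives no proof of this theorem; it only cites \cite{BBFHHVS}). However, there is one genuine gap. In your final case you take $w$ to be ``the neighbor of $v$ colored blue last in the chronological list'' and let $u\to w$ be ``the force that colors it.'' This presupposes that at least one neighbor of $v$ is colored by a force, i.e., lies outside $B$. If instead every neighbor of $v$ belongs to $B$, then all of $N_G(v)$ is blue at time zero, no force ever colors a neighbor of $v$, and the vertex $w$ and the force $u\to w$ that your swap argument manipulates do not exist. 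As written, your proof is silent on this possibility, so the case analysis is not exhaustive.

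The gap is repairable, because that configuration contradicts minimality of $B$: if $v\in B$ and $N_G(v)\subseteq B$, pick any neighbor $s$ of $v$ (one exists since $G$ is connected of order at least two) and consider $B\setminus\{s\}$. Starting from $B\setminus\{s\}$, the vertex $v$ is blue and $s$ is its unique white neighbor, so $v\to s$ is a valid force; after it is performed the blue set is exactly $B$, which then forces the rest of $G$. Hence $B\setminus\{s\}$ is a zero forcing set of size $\Z(G)-1$, which is impossible. Once this short lemma is inserted, the remaining case is exactly the one you treat: some neighbor of $v$ lies outside $B$, so the last-forced neighbor $w$ and the force $u\to w$ exist, and your swap is correct---$v$ is blue and $w$ is its unique white neighbor at that instant, the blue sets of the modified and original processes agree at every subsequent step (so the tail of the chronological list remains valid), and the reversal of the modified list is a minimum zero forcing set omitting $v$. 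With that one case closed, your proof is complete.
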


\begin{prop}\label{p:orderZplus1}
Let $G$ be a connected graph of order $n\ge 2$ or more such that $\zfg(G)$ is connected.  Then $|V(\zfg(G))|\ge \Z(G)+1$. 
\end{prop}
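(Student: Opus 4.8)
The plan is to translate the statement into a purely combinatorial fact about the vertex set of $\zfg(G)$ and then exploit the connectivity hypothesis through a running-intersection argument. Writing $Z = \Z(G)$, the vertices of $\zfg(G)$ are exactly the minimum zero forcing sets $B_1, \dots, B_N$ of $G$, each a $Z$-element subset of $V(G)$, and two of them are adjacent precisely when their symmetric difference has size $2$, i.e.\ one is obtained from the other by deleting a single vertex and inserting a single vertex. Since $G$ is connected of order at least two, the Exclusion Theorem (Theorem \ref{t:exclude}) gives $\bigcap_{j=1}^N B_j = \emptyset$. Thus it suffices to show: if $N$ many $Z$-subsets form a connected swap graph and have empty common intersection, then $N \ge Z+1$.

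Because $\zfg(G)$ is connected, I would first fix an ordering $B_1, B_2, \dots, B_N$ of its vertices such that every $B_i$ with $i \ge 2$ is adjacent in $\zfg(G)$ to some $B_j$ with $j < i$; such an ordering is obtained from any spanning tree of $\zfg(G)$ (for instance a breadth-first ordering from an arbitrary root). Then I would track the running intersection $I_i := B_1 \cap B_2 \cap \cdots \cap B_i$, noting $|I_1| = Z$ at the start and $|I_N| = 0$ at the end by the Exclusion Theorem.

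The heart of the argument is to show that each step can shrink the running intersection by at most one element, that is $|I_{i+1}| \ge |I_i| - 1$. Fix the earlier neighbor $B_j$ (with $j \le i$) of $B_{i+1}$ and write $B_{i+1} = (B_j \setminus \{x\}) \cup \{y\}$ with $x \in B_j$ and $y \notin B_j$. Since $I_i \subseteq B_j$ (as $B_j$ is among $B_1, \dots, B_i$), the new element $y$ lies outside $I_i$, and the only element of $I_i$ that $B_{i+1}$ can fail to contain is $x$; a short computation then gives $I_{i+1} = I_i \setminus \{x\}$, so the size drops by at most one. Telescoping from $|I_1| = Z$ down to $|I_N| = 0$ across the $N-1$ steps forces $N - 1 \ge Z$, which is the desired bound.

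I expect the one delicate point to be precisely this ``drops by at most one'' step, where the swap structure ($|B_{i+1}\triangle B_j| = 2$) and the containment $I_i \subseteq B_j$ must be combined correctly; everything else (the existence of the ordering, and the two endpoint values of $|I_i|$) is routine once the Exclusion Theorem is invoked. It is also worth checking against the extremal cases $\zfg(K_n)=K_n$ and $\zfg(K_{1,n-1})=K_{n-1}$, where $N = Z+1$ exactly, confirming that the bound is tight and that no step of the telescope can be wasted there.
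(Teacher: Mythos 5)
Your proposal is correct and takes essentially the same approach as the paper's proof: both use the connectivity of $\zfg(G)$ to order the minimum zero forcing sets so each new set is adjacent to an earlier one, both use the swap structure (symmetric difference two) to bound the change per step by one, and both close the argument with the Exclusion Theorem. The only difference is bookkeeping --- you track the running intersection $I_i$ shrinking by at most one per step, while the paper tracks its complement, the set of vertices of $G$ excluded by at least one $B_j$ so far, growing by at most one per step; these are De Morgan duals of the same telescoping argument.
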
 
\bpf Let $|V(G)|=n$, $\Z(G)=z$, and $|V(\zfg(G))|=y$.  Choose a minimum zero forcing set $B_1$ for $G$. 
Since  $\zfg(G)$ is connected, we can construct it from $B_1$ as a sequence of subgraphs $G_1=(B_1,\emptyset),\dots,G_y=\zfg(G)$ by adding one zero forcing set at each step with each subgraph being connected. Let $B_k$ be the  vertex of $\zfg(G)$ added at the $k$th step (to create $G_k$). 
By Theorem \ref{t:exclude}, every vertex of $G$ is excluded by some minimum zero forcing set.  Observe that $B_1$ excludes $n-z$ vertices of $G$. When a new vertex $B_k\in V(\zfg(G))$ is added, it excludes at most one vertex of $G$ that was in its neighbor zero forcing set in $G_{k-1}$, and thus excludes at most one vertex of $G$ that has not already been excluded by some $B_i, i=1,\dots,k-1$.  Thus the sets $B_1,\dots,B_y$ exclude at most $n-z+(y-1)$ vertices.  Since $n-z+(y-1)\ge n$, $y\ge z+1$.  
\epf

The assumption that $\zfg(G)$ is connected is necessary, as seen in Example \ref{e:zfg-discon-large}.

\begin{prop}\label{disjoint_union}
For any graphs $G_1$ and $G_2$, $\zfg(G_1 \sqcup G_2) = \zfg(G_1) \cp \zfg(G_2)$.
\end{prop}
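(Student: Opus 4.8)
The plan is to show that $\zfg(G_1\sqcup G_2)$ and $\zfg(G_1)\cp\zfg(G_2)$ have the same vertex set, after a natural identification, and the same edges. First I would pin down the minimum zero forcing sets of a disjoint union. Since $G_1$ and $G_2$ share no edges, the color change rule never propagates between the two parts, so a set $B\subseteq V(G_1)\sqcup V(G_2)$ is a zero forcing set of $G_1\sqcup G_2$ if and only if $B\cap V(G_1)$ forces $G_1$ and $B\cap V(G_2)$ forces $G_2$. Writing $B_i=B\cap V(G_i)$, we get $|B|=|B_1|+|B_2|\ge \Z(G_1)+\Z(G_2)$, with equality exactly when each $B_i$ is a minimum zero forcing set of $G_i$. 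Hence $\Z(G_1\sqcup G_2)=\Z(G_1)+\Z(G_2)$, and the minimum zero forcing sets of $G_1\sqcup G_2$ are precisely the sets $B_1\sqcup B_2$ with $B_i$ a minimum zero forcing set of $G_i$. This gives a bijection between $V(\zfg(G_1\sqcup G_2))$ and $V(\zfg(G_1))\x V(\zfg(G_2))$, which is the vertex set of the Cartesian product.

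It remains to match the edges under the identification $B_1\sqcup B_2\leftrightarrow(B_1,B_2)$. Because $V(G_1)$ and $V(G_2)$ are disjoint, for any two minimum zero forcing sets of the union we have $(B_1\sqcup B_2)\triangle(B_1'\sqcup B_2')=(B_1\triangle B_1')\sqcup(B_2\triangle B_2')$, so the size of the symmetric difference is $|B_1\triangle B_1'|+|B_2\triangle B_2'|$. I would then invoke the parity observation that two sets of equal cardinality have a symmetric difference of even size; since $|B_i|=|B_i'|=\Z(G_i)$, each summand $|B_i\triangle B_i'|$ is even. Therefore the total equals $2$ --- the adjacency condition in $\zfg(G_1\sqcup G_2)$ --- if and only if one summand is $2$ and the other is $0$. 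Unpacking this yields exactly two cases: $B_1=B_1'$ with $B_2B_2'$ an edge of $\zfg(G_2)$, or $B_2=B_2'$ with $B_1B_1'$ an edge of $\zfg(G_1)$. This is precisely the definition of adjacency in the Cartesian product $\zfg(G_1)\cp\zfg(G_2)$, completing the identification.

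The bulk of the argument is routine; the only point requiring care is the characterization of minimum zero forcing sets of the union, which rests on the fact that forcing cannot cross between components together with the observation that minimality of the whole forces minimality on each part. I expect the main obstacle to be ensuring that the size-$1$ case for $|B_i\triangle B_i'|$ genuinely cannot arise, since it is this that would otherwise permit "diagonal" edges absent from the Cartesian product. The parity argument handles this cleanly and is the crux that makes the edge sets coincide exactly.
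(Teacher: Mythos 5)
Your proposal is correct and follows essentially the same route as the paper's proof: identify the minimum zero forcing sets of $G_1 \sqcup G_2$ as exactly the sets $B_1 \sqcup B_2$ with each $B_i$ a minimum zero forcing set of $G_i$, then check that adjacency matches the Cartesian product adjacency. The paper simply asserts both steps, whereas you justify them --- in particular your parity argument ruling out the ``diagonal'' case $|B_1 \triangle B_1'| = |B_2 \triangle B_2'| = 1$ is precisely the detail the paper leaves implicit, so your write-up is, if anything, more complete.
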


\begin{proof}
Every vertex in $\zfg(G_1 \sqcup G_2)$ is of the form $X \sqcup Y$, where $X$ is a vertex of $\zfg(G_1)$ and $Y$ is a vertex of $\zfg(G_2)$. Two vertices $X_1 \cup Y_1$ and $X_2 \cup Y_2$ are adjacent in $\zfg(G_1 \sqcup G_2)$ if and only if either $X_1$ and $X_2$ are adjacent in $\zfg(G_1)$ and $Y_1 = Y_2$ or $Y_1$ and $Y_2$ are adjacent in $\zfg(G_2)$ and $X_1 = X_2$. Thus $\zfg(G_1 \sqcup G_2) = \zfg(G_1) \cp \zfg(G_2)$.
\end{proof}


\begin{cor}\label{disjoint_connected}
If $G_1$ and $G_2$ are graphs with $\zfg(G_1)$ and $\zfg(G_2)$ connected, then $\zfg(G_1 \sqcup G_2)$ is connected.
\end{cor}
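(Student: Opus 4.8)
The plan is to reduce the claim immediately to a statement about the Cartesian product. By Proposition~\ref{disjoint_union} we have $\zfg(G_1 \sqcup G_2) = \zfg(G_1) \cp \zfg(G_2)$, so it suffices to show that the Cartesian product of two connected graphs is connected. This is a classical fact, but I would include the short argument for completeness, since it is the entire content of the corollary.

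First I would fix notation: write $H_1 = \zfg(G_1)$ and $H_2 = \zfg(G_2)$, both connected by hypothesis, and recall that the vertices of $H_1 \cp H_2$ are ordered pairs $(X,Y)$ with $X \in V(H_1)$ and $Y \in V(H_2)$, where $(X,Y)$ is adjacent to $(X',Y')$ exactly when $X = X'$ and $\{Y,Y'\} \in E(H_2)$, or $Y = Y'$ and $\{X,X'\} \in E(H_1)$.

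Next I would take two arbitrary vertices $(X_1,Y_1)$ and $(X_2,Y_2)$ of $H_1 \cp H_2$ and build a walk between them in two stages. Since $H_1$ is connected, there is a walk $X_1 = A_0, A_1, \dots, A_r = X_2$ in $H_1$; holding the second coordinate fixed at $Y_1$ produces a walk $(A_0,Y_1), (A_1,Y_1), \dots, (A_r,Y_1)$ in $H_1 \cp H_2$ from $(X_1,Y_1)$ to $(X_2,Y_1)$, since consecutive terms agree in the second coordinate and are adjacent in the first. Similarly, connectivity of $H_2$ gives a walk from $(X_2,Y_1)$ to $(X_2,Y_2)$ obtained by holding the first coordinate fixed at $X_2$. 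Concatenating the two walks connects $(X_1,Y_1)$ to $(X_2,Y_2)$, so $H_1 \cp H_2$ is connected.

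There is no real obstacle here: the substance of the corollary is carried entirely by Proposition~\ref{disjoint_union}, and what remains is the routine ``grid walk'' argument for Cartesian products. The only point requiring a moment's care is verifying that each move in the two walks is genuinely an edge of the product—one coordinate fixed while the other changes along an edge—which is immediate from the adjacency rule stated above.
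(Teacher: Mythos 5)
Your proposal is correct and follows exactly the route the paper intends: the corollary is stated without proof precisely because it is Proposition~\ref{disjoint_union} combined with the classical fact that a Cartesian product of connected graphs is connected, which is the two-stage ``grid walk'' argument you supply. Writing out that walk explicitly is fine but adds nothing beyond what the paper treats as immediate.
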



The observation about chronological lists of forces in the next remark is well known (and might be described as folklore).
\begin{rem}\label{p:delta}(Neighbor Trading)
Let $G$ be a graph. Let $B$ be a minimum zero forcing set of $G$ and $v\in B$ with $\deg_G(v)\ge 2$. Suppose there is chronological list of forces $\clf$ such that $v\to w$ is the first force performed.  Let $u\in N_G(v)$ and $u\ne w$.  Then $u\in B$ and $B\setminus \{u\}\cup \{w\}$ is a  minimum zero forcing set with chronological list of forces obtained from $\clf$ by replacing $v\to w$ by $v\to u$.  Observe that $B$ and  $B\setminus \{u\}\cup \{w\}$ are adjacent in $\zfg(G)$. 
Thus $\delta(G)\le \omega(\zfg(G))$. 
 \end{rem}

\begin{prop}\label{p:DeltaZ}
Let $G$ be a graph such that $\zfg(G)$ does not have a $K_3$ subgraph. Then $\Delta(\zfg(G))\le Z(G)$.  
 \end{prop}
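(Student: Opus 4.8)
The plan is to bound the degree of an arbitrary vertex of $\zfg(G)$ directly, so fix a minimum zero forcing set $B$, viewed as a vertex of $\zfg(G)$, and note $|B|=\Z(G)$. Every neighbor of $B$ in $\zfg(G)$ has the form $B'=(B\setminus\{x\})\cup\{y\}$ for a unique pair with $x\in B$ and $y\in V(G)\setminus B$; indeed $\{x\}=B\setminus B'$ and $\{y\}=B'\setminus B$ are recovered from $B'$, since adjacency means the symmetric difference has exactly two elements. Thus $\deg_{\zfg(G)}(B)$ equals the number of such pairs $(x,y)$ for which $(B\setminus\{x\})\cup\{y\}$ is again a minimum zero forcing set. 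The first step is to partition the neighbors of $B$ according to the deleted vertex $x\in B$, which yields exactly $|B|=\Z(G)$ classes.

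The key step is to show that each class contains at most one neighbor, and this is precisely where the hypothesis that $\zfg(G)$ contains no $K_3$ is used. Suppose, toward a contradiction, that some $x\in B$ admitted two distinct added vertices $y_1\ne y_2$, producing distinct neighbors $B_1=(B\setminus\{x\})\cup\{y_1\}$ and $B_2=(B\setminus\{x\})\cup\{y_2\}$. I would then compute the symmetric difference $B_1\triangle B_2=\{y_1,y_2\}$, which has exactly two elements, so $B_1$ and $B_2$ are themselves adjacent in $\zfg(G)$. Since $B$ is adjacent to both $B_1$ and $B_2$, the three vertices $B,B_1,B_2$ would induce a triangle, contradicting the assumption. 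Hence each of the $\Z(G)$ classes contributes at most one neighbor, and summing gives $\deg_{\zfg(G)}(B)\le\Z(G)$. As $B$ was arbitrary, $\Delta(\zfg(G))\le\Z(G)$.

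The argument is short and the only point requiring genuine care, which I regard as the main obstacle, is the verification that two neighbors of $B$ sharing the same deleted vertex are themselves adjacent. This rests entirely on the symmetric-difference bookkeeping, so I would confirm that $y_1\notin B_2$ and $y_2\notin B_1$ (each follows from $y_1,y_2\notin B$ together with $y_1\ne y_2$), ensuring that $B_1\triangle B_2$ equals $\{y_1,y_2\}$ exactly and not a proper subset. Everything else is routine counting, and no property of zero forcing beyond $|B|=\Z(G)$ is needed.
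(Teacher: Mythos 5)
Your proof is correct and takes essentially the same approach as the paper: both arguments partition the neighbors of a fixed minimum zero forcing set $B$ according to the shared $(\Z(G)-1)$-element subset (equivalently, the deleted vertex $x$), and use the $K_3$-free hypothesis to show that each of the $\Z(G)$ classes contains at most one neighbor. The only cosmetic difference is that you bound the degree of an arbitrary vertex of $\zfg(G)$, while the paper fixes a vertex of maximum degree at the outset.
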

\bpf Let $z=\Z(G)$ and let $B=\{x_1,\dots,x_z\}\in V(\zfg(G))$ such that $\deg_{\zfg(G)}(B)=\Delta(\zfg(G))$.  In order to be a neighbor of $B$, a vertex of $\zfg(G)$ must differ by exactly one vertex from $B$.    If $S\subsetneq  B$ and there exist distinct vertices $a,b\not \in B$ such  that $S\cup\{a\}$ and $S\cup\{b\}$ are zero forcing sets, then \{$B$, $S\cup\{a\}$,  $S\cup\{b\}$\} induces a $K_3$ in $\zfg(G)$.  Thus each subset of $B$  of order $z-1$ appears in at most one minimum zero forcing set other than $B$.  Since there are exactly $z$ subsets of $B$ having $z-1$ vertices, there are at most $z$ minimum zero forcing sets  with a symmetric difference of two from $B$.  Thus $\Delta(\zfg(G))\le Z(G)$.
\epf

The hypothesis of no $K_3$ in $\zfg(G)$ is necessary in Proposition \ref{p:DeltaZ}, since $\Z(K_3\circ K_1)=2$ and  $\Delta(\zfg(K_3\circ K_1))=6$, where $K_3\circ K_1$ is constructed by adding a leaf to each vertex of $K_3$ (the corona of $K_3$ with $K_1$).

\section{Disconnected zero forcing graphs}\label{s:disc}

A fundamental question in reconfiguration is: for two particular solutions to a problem, can one reconfigure between them? 
Or more generally, is it true that one can reconfigure between any pair of solutions to a particular problem? That is, is the reconfiguration graph connected? 
In this section we exhibit families of graphs for which the zero forcing graph is disconnected. In particular, we show that for many base graphs $G$, 
judiciously appending leaves to certain vertices of $G$ results in a graph $G'$ such that  $\zfg(G')$ is disconnected.  We start with an example that is generalized in Proposition \ref{p:add-leaves-discon}.

\begin{ex}\label{e:zfg-discon-large} For $r\ge 3$ let $G_r$ be the graph obtained from $K_r\cp P_3$ by adding a leaf to each vertex in each of  the two end copies of $K_r$. Figure \ref{fig:G3} shows $G_3$.  The only minimum zero forcing sets are the two sets of $r$  vertices of degree one at each end  ($\{u,v,w\}$ and $\{x,y,z\}$  in Figure \ref{fig:G3}).  Observe that $|V(\zfg(G))|=2$ and $ \Z(G)=r$. \vspace{-8pt}
\end{ex}
  \begin{figure}[!h]
\begin{center}
\scalebox{.5}{\includegraphics{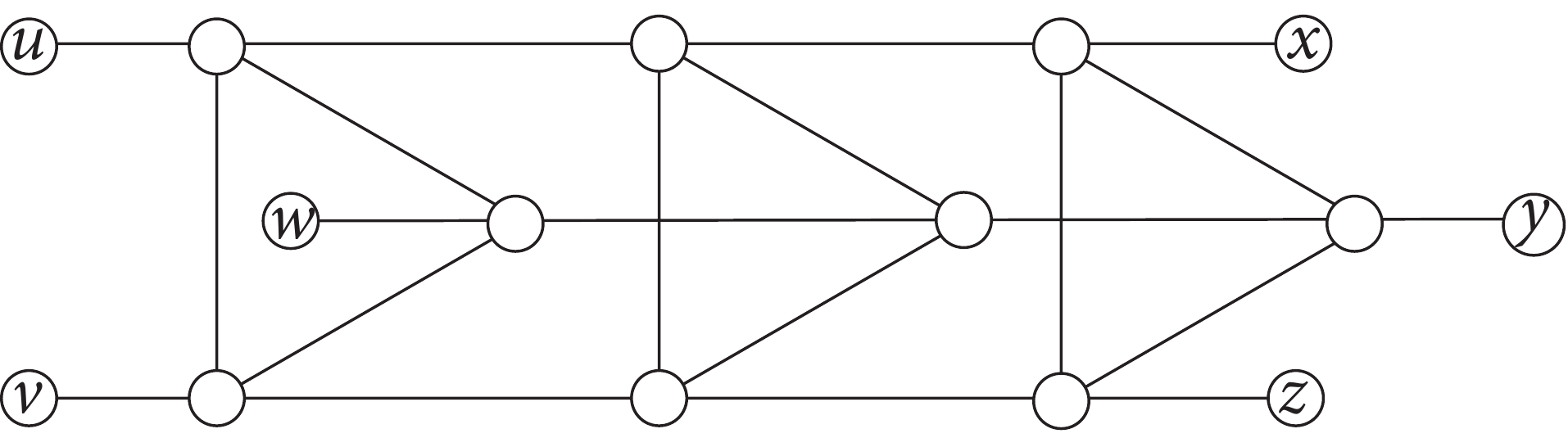}}
\caption{The graph $G_3$.}\label{fig:G3}\vspace{-15pt}
\end{center}
\end{figure}

The next two results give general operations that we can use to build graphs that have disconnected zero forcing graphs.

\begin{prop}\label{p:add-leaves-discon}
Suppose that $G$ is a graph with $\Z(G) = r$ for $r \geq 2$. Let $B$ and $B'$ be two disjoint minimum zero forcing sets of $G$ that are reversals of each other and for which there is no minimum zero forcing set in $G$ that intersects both $B$ and $B'$. Let $\wt G$ be the graph of order $|V(G)|+2r$ obtained by adding $r$ leaves $a_1, \dots, a_r$ to $B$ and $r$ leaves $a'_1, \dots, a'_r$ to $B'$ with one leaf adjacent to each vertex of $B$ and $B'$. Then $\Z(\wt G) = r$ and $\zfg(\wt G)$ is disconnected.
\end{prop}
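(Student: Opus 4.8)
The plan is to first pin down $\Z(\wt G)=r$ and then to show that $A:=\{a_1,\dots,a_r\}$ and $A':=\{a'_1,\dots,a'_r\}$ are the \emph{only} minimum zero forcing sets of $\wt G$. Since their symmetric difference consists of all $2r\ge 4$ leaves, they will be non-adjacent in $\zfg(\wt G)$, which is then a disconnected graph on two vertices.

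For the upper bound $\Z(\wt G)\le r$, I would verify directly that $A$ is a zero forcing set. Each $a_i$ forces its unique neighbor $b_i$, coloring $B$ blue; then I replay a set of forces witnessing that $B$ is a zero forcing set of $G$ with reversal $B'$. The key point is that the vertices of $B'$ are chain-ends that perform no forces, so their still-white far-leaves never block anything, while every vertex of $B$ that forces has its near-leaf already blue; hence every $G$-force remains valid in $\wt G$ and $V(G)$ is colored. Finally each $b'_j$ forces its far-leaf $a'_j$. Applying the same argument to $B'$ (also a minimum zero forcing set of $G$, with reversal $B$) shows $A'$ is a zero forcing set.

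The lower bound and the structural characterization both come from counting forcing-chain endpoints, which I expect to be the crux. Fix any zero forcing set $S$ of $\wt G$ together with a set of forces; its forcing chains partition $V(\wt G)$ into exactly $|S|$ paths. Each of the $2r$ leaves has degree one, so it cannot be interior to a chain and must be an endpoint; as a chain has at most two endpoints, $2r\le 2|S|$, giving $\Z(\wt G)\ge r$, and hence $\Z(\wt G)=r$. For a \emph{minimum} $S$ the inequality is tight, which forces every chain to be a path joining two leaves (in particular rules out single-vertex chains) and forces $S$ to consist of the $r$ leaves that start these chains. These leaves force their neighbors, yielding a set $S^\ast\subseteq B\cup B'$ with $|S^\ast|=r$; the delicate step is to argue $S^\ast$ is a minimum zero forcing set of $G$. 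This holds because a vertex carrying a still-white pendant leaf can never force a vertex of $G$, so the only forces coloring $V(G)$ are the valid $G$-forces issuing from $S^\ast$.

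It then remains to invoke the hypotheses on $G$. Since $S^\ast$ is a minimum zero forcing set of $G$ contained in $B\cup B'$, and no minimum zero forcing set of $G$ meets both $B$ and $B'$, we get $S^\ast\subseteq B$ or $S^\ast\subseteq B'$; as $|S^\ast|=|B|=|B'|=r$, either $S^\ast=B$ (so $S=A$) or $S^\ast=B'$ (so $S=A'$). Thus $A$ and $A'$ are the only minimum zero forcing sets, completing the proof. The main obstacle is the structural claim that every minimum zero forcing set is a set of leaves; once the endpoint count is set up carefully (including ruling out single-vertex chains) and the purely blocking role of white pendant leaves is used to identify $S^\ast$ as a minimum zero forcing set of $G$, the disjointness hypothesis finishes everything cleanly.
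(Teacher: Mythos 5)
Your proposal is correct and follows essentially the same route as the paper's proof: show $\{a_1,\dots,a_r\}$ and $\{a'_1,\dots,a'_r\}$ are zero forcing sets via the reversal hypothesis, use the count of $2r$ leaves versus $r$ forcing chains to conclude every minimum zero forcing set of $\wt G$ consists of leaves, project to the neighbor set $S^\ast$ (a minimum zero forcing set of $G$ inside $B\cup B'$), and invoke the non-intersection hypothesis. The only cosmetic difference is that the paper gets the lower bound $\Z(\wt G)\ge r$ from the fact that appending leaves cannot decrease the zero forcing number, whereas you derive it from the same endpoint-counting argument you need anyway for the structural step.
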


\begin{proof}
First note that adding leaves to a graph does not decrease the zero forcing number, so $\Z(\wt G) \geq r$. Moreover, $\left\{a_1, \dots, a_r \right\}$ and $\left\{a'_1, \dots, a'_r \right\}$ are both zero forcing sets of $\wt G$, so $\Z(\wt G) = r$. Leaves must be endpoints of zero forcing chains, and there are $2r$ leaves and $r$ zero forcing chains for any minimum zero forcing set $B$ of $\wt G$, so every endpoint of the zero forcing chains must be a leaf. In particular, the minimum zero forcing set $B$ must consist only of vertices from $a_1, \dots, a_r$ and $a'_1, \dots, a'_r$. 
Observe that if $\wt B$ is a zero forcing set of $\wt G$, then the set of neighbors of vertices in $\wt B$ is a zero forcing set of $G$.  Thus the only minimum zero forcing sets are  $\wt B = \left\{a_1, \dots, a_r \right\}$ and $\wt B' = \left\{a'_1, \dots, a'_r \right\}$, or else there is a minimum zero forcing set of $G$ that intersects both $B$ and $B'$.
\end{proof}

Let $B$ be a minimum zero forcing set, let $\clf$ be a chronological list of forces, let $B'$ be the associated reversal. The list $\clf'$ of forces obtained from $\clf$ by reversing the order of the list and reversing each force is called the \emph{reversal of $\clf$}.  Note that $\clf'$ is a chronological list of forces of $B'$, and using this list produces $B$ as the reversal of $B'$. If $\Z(G)<|V(G)|$, then the vertex of $B$ that performs the first force is the last vertex forced starting with $B'$ and using $\clf'$, and vice versa.

\begin{rem}  
 If $\delta(G) = \Z(G) = r \geq 1$,  and $B$ is  any minimum zero forcing set $B$ of $G$, then there is a vertex in $G$ of degree $r$ that is not in $B$: Since $\delta(G) = \Z(G)$,   $r < |V(G)|$ is immediate.   The vertex $v$   that is forced last starting with $B$ and using $\clf$ is not in $B$ (since it is forced) and must have degree $r$ because it performs the first force starting with $B'$ and using $\clf'$. \end{rem}

\begin{prop}\label{p:G+H-discon} 
Suppose that $G$ and $H$ are any graphs with $\delta(G) = \delta(H) = \Z(G) = \Z(H) = r$ for $r \geq 2$. Let $X$ be any graph of order $|V(G)|+|V(H)|+r$ obtained from $G \sqcup H$ by choosing a minimum zero forcing set $B_G = \left\{v_1, \dots, v_r\right\}$ of $G$ of size $r$ and a minimum zero forcing set $B_H = \left\{w_1, \dots, w_r\right\}$ of $H$ of size $r$, adding $r$ new vertices $u_1, \dots, u_r$, making $u_1, \dots, u_r$ a complete graph with $\binom{r}{2}$ edges, and adding $2r$ edges of the form $v_i u_i$ and $u_i w_i$ for $i = 1, \dots, r$. Then $\Z(X) = r$ and $\zfg(X)$ is disconnected.
\end{prop}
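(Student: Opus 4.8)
The plan is to establish two things: that $\Z(X)=r$, and that every minimum zero forcing set of $X$ is contained in $V(G)$ or in $V(H)$. Granting the second statement, write $U=\{u_1,\dots,u_r\}$ and partition $V(\zfg(X))$ into the sets contained in $V(G)$ and those contained in $V(H)$. If $B\subseteq V(G)$ and $B'\subseteq V(H)$ are minimum zero forcing sets, then they are disjoint, so they differ in $|B|+|B'|=2r\ge 4$ vertices and hence are nonadjacent in $\zfg(X)$. Thus no edge joins the two parts, and since (as shown below) both parts are nonempty, $\zfg(X)$ is disconnected. The hypothesis $r\ge 2$ is used exactly here: for $r=1$ two disjoint singletons would differ in precisely two vertices.

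For $\Z(X)=r$, the bound $\Z(X)\ge r$ follows from $\Z(X)\ge\delta(X)\ge r$, since every interior vertex of $G$ or $H$ keeps degree at least $\delta=r$, each $v_i$ and $w_i$ gains one neighbor, and each $u_i$ has degree $r+1$. For the reverse inequality I will produce a minimum zero forcing set inside $V(G)$. Fix a chronological list of forces $\clf$ for $B_G$ in $G$, let $B_G'$ be the associated reversal, and let $\clf'$ be the reversal of $\clf$; the vertices performing no force under $\clf'$ are exactly those of $B_G$, so every force of $\clf'$ is performed by a vertex of $V(G)\setminus B_G$, none of which is adjacent to any $u_i$. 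Hence $\clf'$ is a valid list in $X$ and colors all of $G$ from $B_G'$. Now each $v_i$ has $u_i$ as its only white neighbor, so $v_i\to u_i$; then each $u_i$ has $w_i$ as its only white neighbor, so $u_i\to w_i$; finally $B_H$ is blue and colors $H$. Thus $B_G'\subseteq V(G)$ (and symmetrically $B_H'\subseteq V(H)$) is a minimum zero forcing set of $X$, so both parts above are nonempty.

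The crux, and where I expect the real difficulty, is the structural claim. Fix a minimum zero forcing set $B$ with a set of forces $\clf$, and set $b_G=|B\cap V(G)|$, $b_H=|B\cap V(H)|$, $b_U=|B\cap U|$, so $b_G+b_H+b_U=r$. I first argue $b_U=0$. Because the only edges from $V(H)$ to the rest of $X$ are the spokes $u_iw_i$, the set $S_H=(B\cap V(H))\cup\{\,w_i:(u_i\to w_i)\in\clf\,\}$ is a zero forcing set of $H$, since every interior $H$-force of $\clf$ stays valid in $H$; hence $b_H+h\ge\Z(H)=r$, where $h$ is the number of indices $i$ with $u_i\to w_i$ in $\clf$. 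Symmetrically $b_G+g\ge r$, where $g$ counts $u_i\to v_i$. Each $u_i$ forces at most once, so $g+h\le r$; adding the two inequalities gives $b_G+b_H\ge 2r-(g+h)\ge r$, and since $b_G+b_H\le r$ we conclude $b_G+b_H=r$ and $b_U=0$.

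Finally I rule out the \emph{straddling} case $b_G,b_H\ge 1$, where necessarily $b_G,b_H<r$. Since $b_U=0$, every $u_i$ is forced, and the first vertex of $U$ to be colored must be forced across a spoke, say $v_i\to u_i$ (the case $w_i\to u_i$ is symmetric). At that instant no $u_j$ is colored, so no spoke $u_j\to v_j$ has fired and every blue vertex of $G$ arose from forcing inside $G$ starting at $B\cap V(G)$. But if $b_G<r=\delta(G)$, no interior $G$-force can ever fire: each $x\in B\cap V(G)$ has at least $\deg_G(x)-(b_G-1)\ge r-b_G+1\ge 2$ white neighbors. Hence the blue set of $G$ never grows beyond $B\cap V(G)$ before that spoke, forcing $N_G(v_i)\subseteq B\cap V(G)$ and so $\deg_G(v_i)\le b_G<r$, contradicting $\delta(G)=r$. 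Therefore $b_G=0$ or $b_H=0$, which together with $b_U=0$ yields $B\subseteq V(G)$ or $B\subseteq V(H)$, completing the argument. I expect the counting step for $b_U=0$ and this ``no interior force can fire'' observation to be the two points needing the most care.
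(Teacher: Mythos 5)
Your proof is correct, and while its skeleton matches the paper's (establish $\Z(X)=r$ via the reversal of $B_G$, show every minimum zero forcing set lies entirely in $V(G)$ or entirely in $V(H)$, then conclude disconnectedness from the symmetric difference of size $2r\ge 4$), your proof of the confinement step is genuinely different. The paper does it in one stroke by examining the \emph{first force} of an arbitrary minimum zero forcing set $R$: the forcing vertex $x$ must have all but one of its neighbors inside $R\setminus\{x\}$, hence degree at most $r$, while $\delta(X)\ge r$ forces degree exactly $r$; since every $u_i$, $v_i$, and $w_i$ has degree at least $r+1$, the vertex $x$ is an interior vertex of $G$ (or of $H$), and then $R\subseteq\{x\}\cup N_G(x)\subseteq V(G)$ immediately. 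This is shorter and yields the stronger conclusion that every minimum zero forcing set consists of a degree-$r$ interior vertex together with $r-1$ of its neighbors. You instead argue in two stages: the projection inequality (that $(B\cap V(H))\cup\{w_i:(u_i\to w_i)\in\clf\}$ is a zero forcing set of $H$, so $b_H+h\ge r$, and symmetrically for $G$), combined with the fact that each $u_i$ forces at most once, gives $b_U=0$; then the analysis of the first $U$-vertex to be colored, together with the frozen-side invariant (no force can fire inside $G$ while its blue set has fewer than $\delta(G)$ vertices), rules out straddling sets. Your route is longer, and both the projection claim and the frozen-side invariant deserve the careful replay/induction justification you allude to, but each is sound. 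What your approach buys is independence from the connector gadget: neither of your two stages uses that $U$ induces a clique (only that each $u_i$ has one spoke into each side), whereas the paper's argument leans on $\deg_X(u_i)=r+1>r$; in fact your counting step even reproves $\Z(X)\ge r$ without the minimum-degree bound, so your confinement argument would survive replacing the clique on $U$ by other graphs, where the paper's would not.
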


\begin{proof}
Since $\delta(X) = r$, any zero forcing set of $X$ must have size at least $r$. Let $\clf$ be a chronological list of forces of $B_G$ that produces a set $B_G'$ as its reversal in $G$.  Then $B_G'$ is still a zero forcing set in $X$: 
To begin, $B_G'$ colors all of $G$ blue by the reversal $\clf'$ of $\clf$ (because the only vertices of $G$ adjacent to the $u_i$ in $X$ are in $B_G$, and the vertices of $B_G$ do not perform forces).  Then the vertices of $B_G$ color the new vertices $u_1, \dots, u_r$ blue,  $u_1, \dots, u_r$ color the vertices of $B_H$ blue, and finally $B_H$ colors all of $H$ blue. Thus $\Z(X) = r$.  Similarly, any reversal of $B_H$ in $H$ is still a zero forcing set in $X$. In any minimum zero forcing set $R$ of $X$, the vertex that makes the first force must have degree $r$ and it must initially have $r-1$ blue neighbors, so the vertex that makes the first force in $R$ cannot be a new vertex, and it cannot be $v_i$ or $w_i$ for $i = 1, \dots, r$. It must be some vertex from $G$ (respectively, $H$) that is not in $B_G$ (respectively, $B_H$). But now  its $r-1$ blue neighbors must be  the other elements of $R$, and so are also from $G$ (respectively, $H$). Thus the minimum zero forcing sets in $X$ are disconnected in $\zfg(X)$.
\end{proof}

Note that the operations in Propositions \ref{p:add-leaves-discon} and \ref{p:G+H-discon} can be applied to sufficiently large cycles and other circulant graphs to create an assortment of base graphs with disconnected zero forcing graphs.


\section{Graphs having a specified zero forcing graph}\label{s:spec}

In this section we characterize graphs whose zero forcing graph is a complete graph or a path.   We also construct graphs  whose zero forcing graph is a cycle or a hypercube but we suspect that there are other graphs  with the same zero forcing graph in these cases. Note that the hypercube $Q_d$ contains an induced $K_{1, d}$, but we show for $n \geq 3$ that there are no graphs $G$ having $\zfg(G)=K_{1,r}$. Hence zero forcing graphs will not have a forbidden subgraph characterization.

\begin{prop}\label{p:zfg=K2}
For connected graphs $G$, $\zfg(G) = K_2$ if and only if $G = P_n$ for some $n \geq 2$. 
\end{prop}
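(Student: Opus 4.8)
The plan is to prove both implications, with the forward direction being essentially immediate and the reverse direction reducing to the fact that graphs with zero forcing number one are exactly paths. For the direction $G = P_n \Rightarrow \zfg(G) = K_2$, I would simply invoke Remark~\ref{r:zfg-path}, which already records that $\zfg(P_n) = K_2$ for $n \ge 2$.

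For the reverse direction, the first step is to pin down $\Z(G)$. Since $\zfg(G) = K_2$ has two distinct vertices, $G$ has two distinct minimum zero forcing sets; as these have equal cardinality, $|V(G)| \ge 2$. Because $\zfg(G) = K_2$ is connected with exactly two vertices, Proposition~\ref{p:orderZplus1} gives $2 = |V(\zfg(G))| \ge \Z(G) + 1$, so $\Z(G) \le 1$. As $G$ is connected of order at least two, $\Z(G) \ge 1$, and hence $\Z(G) = 1$.

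It then remains to show that $\Z(G) = 1$ forces $G$ to be a path. I would fix a minimum zero forcing set $B = \{v_1\}$. Since the initial blue set is a single vertex, the set of forces consists of a single forcing chain, and because every vertex is eventually colored, this chain lists all of $V(G)$, say $v_1 \to v_2 \to \cdots \to v_n$; in particular each $\{v_i, v_{i+1}\}$ is an edge. To rule out any other edge, suppose $\{v_a, v_b\}$ is an edge with $a < b$ and $b \ne a+1$. At the moment $v_a$ performs the force $v_a \to v_{a+1}$, the blue vertices are exactly $v_1, \dots, v_a$, so both $v_{a+1}$ and $v_b$ are white neighbors of $v_a$, contradicting the color change rule, which requires $v_{a+1}$ to be the unique white neighbor of $v_a$. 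Thus the only edges of $G$ are the $\{v_i, v_{i+1}\}$, i.e.\ $G = P_n$.

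The only slightly delicate point is the claim that a single initial blue vertex yields a single forcing chain exhausting $V(G)$; I expect this to be the main thing to justify with care. It follows directly from the definitions of forcing chain and set of forces: the forcing chains partition the colored vertices and each begins at a vertex of $B$, so $|B| = 1$ leaves exactly one chain, which must contain all $n$ vertices since $B$ is a zero forcing set.
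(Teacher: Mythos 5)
Your proposal is correct and follows essentially the same route as the paper: the forward direction cites Remark~\ref{r:zfg-path}, and the reverse direction applies Proposition~\ref{p:orderZplus1} to get $\Z(G)=1$ and then concludes $G$ is a path, ruling out $n=1$. The only difference is that the paper invokes the known fact that $\Z(G)=1$ exactly for paths, whereas you re-derive it via the single-forcing-chain argument (and you also justify $|V(G)|\ge 2$ explicitly before using Proposition~\ref{p:orderZplus1}); both are fine.
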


\begin{proof}
That $\zfg(P_n)=K_2$ for $n\ge 2$ was noted in Remark \ref{r:zfg-path}. Suppose that $\zfg(G) = K_2$. 
Then $\Z(G)=1$ by Proposition \ref{p:orderZplus1}, 
so $G = P_n$ for some $n \geq 1$. However when $n = 1$, $\zfg(G) = K_1$, so we must have $n \geq 2$.
\end{proof}

\begin{prop}
For connected graphs $G$ and $r \geq 3$, $\zfg(G) = K_r$ if and only if $G = K_r$ or $G = K_{1, r}$. 
\end{prop}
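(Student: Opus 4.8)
The forward (``if'') direction is immediate from the earlier computations: Proposition~\ref{p:Kn} gives $\zfg(K_r)=K_r$, and Proposition~\ref{p:K1n} with $n=r+1$ gives $\zfg(K_{1,r})=K_r$. For the converse, suppose $G$ is connected, $r\ge 3$, and $\zfg(G)=K_r$. Write $z=\Z(G)$. Since $K_r$ is connected, Proposition~\ref{p:orderZplus1} gives $r=|V(\zfg(G))|\ge z+1$, so $z\le r-1$. The vertices of $\zfg(G)$ are the $r$ minimum zero forcing sets of $G$, each of size $z$, and because $\zfg(G)=K_r$ every two of them are adjacent; that is, any two minimum zero forcing sets have symmetric difference exactly $2$.

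The heart of the plan is a purely combinatorial dichotomy for such a family, which I would isolate as a lemma: if $\clf$ is a family of at least three distinct $z$-element sets pairwise at symmetric difference $2$, then either (the \emph{star} case) all members share a common $(z-1)$-set $S$ and have the form $S\cup\{a\}$, or (the \emph{anti-star} case) all members are contained in a common $(z+1)$-set $T$ and have the form $T\setminus\{b\}$. To prove it, fix one member $B_1$ and write every other member as $B_1\setminus\{c_i\}\cup\{d_i\}$ with $c_i\in B_1$ and $d_i\notin B_1$; the symmetric-difference-$2$ condition forces, for each pair $i\ne j$, that \emph{exactly} one of $c_i=c_j$ or $d_i=d_j$ holds, and a short transitivity argument (checked directly when $|\clf|=3$, and via a third index when $|\clf|\ge 4$) then forces either all the $c_i$ equal (star) or all the $d_i$ equal (anti-star). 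Applying this to the minimum zero forcing sets of $G$, the star case is impossible: all members would contain $S$, so $S\subseteq\bigcap\{B:B\text{ a minimum zero forcing set}\}=\emptyset$ by the Exclusion Theorem (Theorem~\ref{t:exclude}), forcing $z=1$; but then $G$ is a path with only two minimum zero forcing sets, contradicting $r\ge 3$. Hence we are in the anti-star case. Since the $r$ members are distinct subsets of the $(z+1)$-set $T$, we get $r\le z+1$, which combined with $z\le r-1$ yields $z=r-1$ and $|T|=r$. Thus the minimum zero forcing sets of $G$ are exactly the $r$ sets $T\setminus\{t\}$, $t\in T$.

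It then remains to recover $G$ from this description. Let $W=V(G)\setminus T$ be the set of vertices lying in no minimum zero forcing set. If $W=\emptyset$, then $|V(G)|=r$ and $\Z(G)=r-1=|V(G)|-1$, so no set of $r-2$ vertices is a zero forcing set. A two-white-vertex set $V(G)\setminus\{u,v\}$ is a zero forcing set precisely when some other vertex is adjacent to exactly one of $u,v$ (force that one, then the last white vertex is forced since $G$ is connected); the failure of this for every pair means every vertex outside $\{u,v\}$ is adjacent to both or neither of $u,v$, i.e. every two vertices of $G$ are twins (have the same neighbours apart from themselves). A connected graph in which every two vertices are twins has no non-adjacent pair, since a non-adjacent twin pair together with connectivity yields a contradiction; hence $G=K_r$.

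The remaining case $W\ne\emptyset$ is where I expect the real work to lie, and the goal is to show $G=K_{1,r}$. Here every vertex of $W$ must be an interior vertex of every forcing chain: it is neither a chain start (starts lie in a minimum zero forcing set, hence in $T$) nor a chain endpoint (endpoints form a reversal, again in $T$). The plan is to establish successively that $T$ is independent, that some vertex of $W$ is adjacent to all of $T$, that $T$ is dominated by a single such vertex with $|W|=1$, and that each vertex of $T$ is a leaf; together these force $G=K_{1,r}$. I would prove the independence of $T$ and the leaf structure by contradiction: from an unwanted adjacency one should be able to build a zero forcing set of size $r-1$ that contains a vertex of $W$ (equivalently, analyse the obstructing fort meeting $T$ in exactly two vertices), contradicting that the minimum zero forcing sets are exactly the $T\setminus\{t\}$. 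The bound $\delta(G)\le\omega(\zfg(G))=r$ from Neighbor Trading (Remark~\ref{p:delta}) is a useful auxiliary handle. This $W\ne\emptyset$ analysis---ruling out all ``caterpillar-like'' alternatives whose extra vertices would sneak into a minimum zero forcing set---is the main obstacle; by contrast the reduction to the anti-star family in the first two paragraphs is the part I am confident is clean.
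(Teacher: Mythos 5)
The parts you completed are correct, and they take a genuinely different route from the paper as far as they go. Your star/anti-star dichotomy for families of $z$-sets that are pairwise at symmetric difference two is sound, and combining it with the Exclusion Theorem (Theorem \ref{t:exclude}) and Proposition \ref{p:orderZplus1} cleanly pins down $\Z(G)=r-1$ and shows that the minimum zero forcing sets are exactly the $r$ sets $T\setminus\{t\}$, $t\in T$, for an $r$-set $T$. (The paper reaches comparable structure differently: completeness of $\zfg(G)$ forces any set of forcing chains to have at most one non-singleton chain, because a reversal must be equal or adjacent to the set it reverses.) Your twin argument in the case $W=\emptyset$ is also correct, and replaces the paper's citation of \cite{AIM} for $\Z(G)=|V(G)|-1$ implying completeness.

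However, the case $W\neq\emptyset$ --- the entire $G=K_{1,r}$ half of the converse --- is a plan, not a proof, and that is precisely where the paper's proof does its real work (its second case, where both endpoints of the long chain have degree one). This is a genuine gap. Moreover, your intermediate goals are in an unworkable order: neither ``$T$ is independent'' nor ``some vertex of $W$ is adjacent to all of $T$'' admits a direct attack from your setup; the step that unlocks the case is the tool you relegate to an ``auxiliary handle,'' neighbor trading (Remark \ref{p:delta}). Concretely: for $B=T\setminus\{t\}$, since $B$ and its reversal both have the form $T\setminus\{\cdot\}$, at most one forcing chain is a non-singleton; the singleton chains lie in $T$, so the unique long chain runs from some $s\in B$ through all of $W$ (as interior vertices) to $t$, and its first force is $s\to w_1$ with $w_1\in W$. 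If $\deg_G(s)\geq 2$, Remark \ref{p:delta} would produce a minimum zero forcing set containing $w_1\in W$, contradicting that every minimum zero forcing set is a subset of $T$; hence $\deg_G(s)=1$, and the same argument applied to the reversal gives $\deg_G(t)=1$. Letting $t$ range over $T$ shows every vertex of $T$ is a leaf; the forcing constraints along the chain show $W$ induces a path, so $G$ is a caterpillar with spine $W$ and $r$ pendant leaves. Finally, if $|W|\geq 2$, then since $r\geq 3$ some leaf $x\notin\{s,t\}$ hangs on a spine vertex, and splitting the long path of the natural minimum path cover at $x$ gives another minimum path cover in which some path ends at a spine vertex; by the path-cover/zero-forcing correspondence for trees (\cite{AIM}, used throughout Section \ref{s:forest}), choosing that endpoint yields a minimum zero forcing set meeting $W$, a contradiction. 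So $|W|=1$ and $G=K_{1,r}$. Without this argument (or the paper's own chain-and-singleton case analysis), your proposal does not establish the theorem.
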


\begin{proof}
It was established in Propositions \ref{p:Kn} and \ref{p:K1n} that $\zfg(G)=K_r$ for $G = K_r$ and $G = K_{1, r}$.  
Suppose that $\zfg(G) = K_r$. Let $P^{(1)}, \dots, P^{(z)}$ denote a set of zero forcing chains for particular minimum  zero forcing set of $G$. Recall that a reversal of a zero forcing set is also a zero forcing set, and that two minimum zero forcing sets are adjacent in $\zfg(G)$ if their symmetric difference has size two. Thus there can be at most one $i$ with $|V(P^{(i)})|\geq 2$.

Moreover there must be some $i$ with $|V(P^{(i)})| \geq 2$, or else the zero forcing set would not be minimum. Without loss of generality, we assume that $|V(P^{(1)})| \geq 2$, $|V(P^{(i)})| = 1$ for each $i > 1$, and $P^{(1)}$ has endpoints $a$ and $b$. We refer to the paths $P^{(i)}$ for $i > 1$ as {\em singletons}.

Suppose that some singleton $w$ is not adjacent to any vertex on $P^{(1)}$. Then there would be a zero forcing set for $G$ of size $z-1$ consisting of $a$ and all of the singletons except for $w$, which is a contradiction. Thus every singleton is adjacent to at least one vertex on $P^{(1)}$.

We break the argument into cases depending on whether there is a singleton adjacent to an endpoint of $P^{(1)}$. For the first case, suppose that there is a singleton $w$ adjacent to an endpoint of $P^{(1)}$, and without loss of generality let the endpoint be $a$. Then $|V(P^{(1)})| = 2$, or else by neighbor trading, $\zfg(G)$ would have two non-adjacent vertices: the minimum zero forcing set containing $b$ and all singletons; and the minimum zero forcing set containing $a$, the neighbor of $a$ on $P^{(1)}$, and all singletons besides $w$. Thus $\Z(G) = |V(G)|-1$, which implies that $G$ is complete \cite{AIM}. Since $\zfg(G) = K_r$ and $G$ is complete, we have $G = K_r$.

For the second case, suppose that $a$ and $b$ both have degree $1$. Let $c$ be the high degree vertex of $G$ vertex on $P^{(1)}$  that is closest to $a$, and let $w$ be a singleton that is adjacent to $c$. If there were any vertex on $P^{(1)}$ between $b$ and $c$, then $\zfg(G)$ would have two non-adjacent vertices: the minimum zero forcing set containing $b$ and all singletons; and the minimum zero forcing set containing $a$, the neighbor of $c$ on $P^{(1)}$ that is closest to $b$, and all singletons besides $w$. Thus there is no vertex on $P^{(1)}$ between $b$ and $c$, so $c$ is the only high degree vertex of $G$ on $P^{(1)}$. By an analogous argument, there is no vertex on $P^{(1)}$ between $a$ and $c$. Thus $P^{(1)}$ has $3$ vertices and every singleton is adjacent to the middle vertex. If any singletons $w$ and $z$ were adjacent to each other, then there would be a minimum zero forcing set containing $a$ and all of the singletons except for $w$, a contradiction. Thus $G$ is a star, and $G = K_{1,r}$ because $\zfg(G) = K_r$.
\end{proof}

\begin{cor}\label{knzf}
If $G$ is connected and $\zfg(G) = K_r$ for $r \geq 2$, then $\Z(G) = r-1$. 
\end{cor}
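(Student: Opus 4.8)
The plan is to derive the corollary directly from the two preceding propositions, using a short case analysis on the value of $r$. The statement to prove is that if $G$ is connected and $\zfg(G) = K_r$ for $r \geq 2$, then $\Z(G) = r-1$. Since the characterization theorems already tell us exactly which base graphs produce a complete zero forcing graph, the strategy is simply to read off the zero forcing number in each case.

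First I would dispose of the boundary case $r = 2$ separately, because the proposition characterizing $\zfg(G) = K_r$ is stated only for $r \geq 3$. For $r = 2$ we have $\zfg(G) = K_2$, and by Proposition \ref{p:zfg=K2} this forces $G = P_n$ for some $n \geq 2$; since $\Z(P_n) = 1 = r - 1$, the claim holds in this case. Next, for $r \geq 3$ I would invoke the immediately preceding proposition, which says $G = K_r$ or $G = K_{1,r}$. In the first case $\Z(K_r) = r - 1$, and in the second case $\Z(K_{1,r}) = r - 1$; both of these zero forcing numbers follow from the explicit descriptions of the minimum zero forcing sets given in Propositions \ref{p:Kn} and \ref{p:K1n} (a complete graph on $r$ vertices has zero forcing number $r-1$, and the star $K_{1,r}$ likewise requires all but one leaf). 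In every case $\Z(G) = r - 1$, which completes the argument.

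There is essentially no main obstacle here, since the heavy lifting was done in the characterization results; the only point requiring care is that the preceding proposition covers only $r \geq 3$, so I must remember to handle $r = 2$ via the $K_2$ characterization in Proposition \ref{p:zfg=K2}. A secondary subtlety worth double-checking is the indexing convention: Proposition \ref{p:K1n} states $\zfg(K_{1,n-1}) = K_{n-1}$, so that the star whose zero forcing graph is $K_r$ is $K_{1,r}$ (a star with $r$ leaves), and its zero forcing number is indeed $r-1$, consistent with the claim. With these conventions lined up, the proof is a clean two-line case split.
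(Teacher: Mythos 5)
Your proof is correct and matches the paper's intended argument: the corollary is stated without proof precisely because it follows by reading off $\Z(G)$ from the characterizations, with Proposition \ref{p:zfg=K2} handling $r=2$ and the $r\ge 3$ proposition giving $G = K_r$ or $G = K_{1,r}$, both with zero forcing number $r-1$. Your attention to the boundary case $r=2$ and to the indexing in Proposition \ref{p:K1n} is exactly the care the paper's implicit argument requires.
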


 \begin{cor}  $\zfg(G) = K_r$ for $r \geq 2$ if and only if   one connected component  of $G$ is either $K_r$ or $K_{1,r}$ and all the other connected components are isolated vertices.\end{cor}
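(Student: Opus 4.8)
The plan is to reduce the corollary to the proposition that was just proved, which characterizes connected base graphs $G$ with $\zfg(G)=K_r$, together with the disjoint union formula of Proposition \ref{disjoint_union}. The key observation is that $K_r$ is a connected graph, and a Cartesian product $\zfg(G_1)\cp\zfg(G_2)$ is $K_r$ only in degenerate situations, so most of the work is in controlling how the components of $G$ contribute to $\zfg(G)$.

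First I would write $G$ as the disjoint union of its connected components $H_1,\dots,H_k$. By repeated application of Proposition \ref{disjoint_union}, $\zfg(G)=\zfg(H_1)\cp\cdots\cp\zfg(H_k)$. A Cartesian product of graphs, each on at least one vertex, is a complete graph only if it has no pair of independent edges coming from different factors; concretely, $A\cp B$ contains an induced $C_4$ whenever both $A$ and $B$ have an edge, so at most one factor can have an edge when $r\ge 2$ forces $\zfg(G)$ to have an edge (since $K_r$ is not edgeless). Thus exactly one component, say $H_1$, has $\zfg(H_1)$ with at least one edge, and every other $\zfg(H_i)$ is a single vertex $K_1$. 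An isolated vertex as a base graph has a unique (empty) minimum zero forcing set, so $\zfg(K_1)=K_1$; and more generally I would check that $\zfg(H_i)=K_1$ forces $H_i=K_1$, because any connected graph on two or more vertices has at least two minimum zero forcing sets by the Exclusion Theorem (Theorem \ref{t:exclude}).

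Next, since $\cp$ with $K_1$ factors does not change the graph, we get $\zfg(G)=\zfg(H_1)=K_r$ with $H_1$ connected. Now the already-proved proposition applies directly: for $r\ge 3$, $H_1=K_r$ or $H_1=K_{1,r}$, and for the boundary case $r=2$ I would invoke Proposition \ref{p:zfg=K2}, which gives $\zfg(H_1)=K_2$ iff $H_1=P_n$; noting $K_2=K_2$ and $K_{1,2}=P_3$ are both paths, the statement ``$K_r$ or $K_{1,r}$'' remains correct at $r=2$. The converse direction is immediate: if one component is $K_r$ or $K_{1,r}$ and the rest are isolated vertices, then $\zfg(G)=\zfg(K_r)\cp K_1\cp\cdots=K_r$ (or the analogous computation with $K_{1,r}$), using Propositions \ref{p:Kn} and \ref{p:K1n}.

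The main obstacle I anticipate is the Cartesian-product step: making precise and airtight the claim that $\zfg(H_1)\cp\cdots\cp\zfg(H_k)=K_r$ forces all but one factor to be trivial. The clean way to argue this is to recall that in a nontrivial Cartesian product $A\cp B$ (both factors having an edge), any two vertices $(a,b)$ and $(a',b')$ with $a\ne a'$ and $b\ne b'$ are at distance $\dist_A(a,a')+\dist_B(b,b')\ge 2$, so the product is not complete; equivalently a complete graph has diameter $1$ while a nontrivial product has diameter at least $2$. This diameter argument handles the reduction crisply and avoids case analysis. The remaining care is bookkeeping around the degenerate factors $K_1$ and confirming the $r=2$ endpoint, both of which are routine once the reduction is in place.
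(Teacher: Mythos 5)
Your overall reduction is sound and is essentially the intended derivation of this corollary: decompose $G$ into components $H_1,\dots,H_k$, apply Proposition \ref{disjoint_union} to get $\zfg(G)=\zfg(H_1)\cp\cdots\cp\zfg(H_k)$, note that a complete graph on at least two vertices cannot be a Cartesian product with two factors each having at least two vertices, conclude that exactly one factor is nontrivial, use Theorem \ref{t:exclude} to see that $\zfg(H_i)=K_1$ forces $H_i=K_1$, and finish with the connected-graph characterization plus Propositions \ref{p:Kn} and \ref{p:K1n} for the converse. One point of care: your intermediate claim ``at most one factor can have an edge'' is weaker than what you need, since an edgeless factor on two or more vertices must also be ruled out. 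Your own distance argument actually does this --- two product vertices $(a,b)$ and $(a',b')$ with $a\ne a'$ and $b\ne b'$ are non-adjacent regardless of whether the factors have edges --- so state it without the ``both factors having an edge'' hypothesis.

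The genuine gap is at $r=2$. Proposition \ref{p:zfg=K2} says that the nontrivial connected component $H_1$ satisfies $\zfg(H_1)=K_2$ if and only if $H_1=P_n$ for some $n\ge 2$ --- \emph{any} $n$, not just $n=2,3$. Your sentence ``noting $K_2$ and $K_{1,2}=P_3$ are both paths, the statement remains correct at $r=2$'' runs the implication backwards: that $K_2$ and $K_{1,2}$ are paths gives the ``if'' direction at $r=2$, but the ``only if'' direction fails, since for example $G=P_5$ is connected with $\zfg(P_5)=K_2$, yet $P_5$ is neither $K_2$ nor $K_{1,2}$ and has no isolated-vertex components. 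So no argument can close this case as you have framed it: the corollary as printed is false at $r=2$ (this appears to be a slip in the paper; the statement is correct for $r\ge 3$, and at $r=2$ the nontrivial component must be allowed to be an arbitrary path). Your write-up should either restrict to $r\ge 3$, where everything you did goes through, or explicitly flag the $r=2$ discrepancy rather than assert that it is fine.
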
 

  For $n\ge 5$, let $C_n(2)$ denote a cycle with vertices $1,\dots,n$ and two additional edges $\{1,3\}$ and $\{3,n\}$; 
  $C_7(2)$ is shown in Figure \ref{f:Cn(2)}.  Note that $n\ge 5$ implies there are at least two vertices of degree two in $C_n(2)$.\vspace{-8pt}
  
\begin{figure}[h!]\begin{center}
\scalebox{.5}{\includegraphics{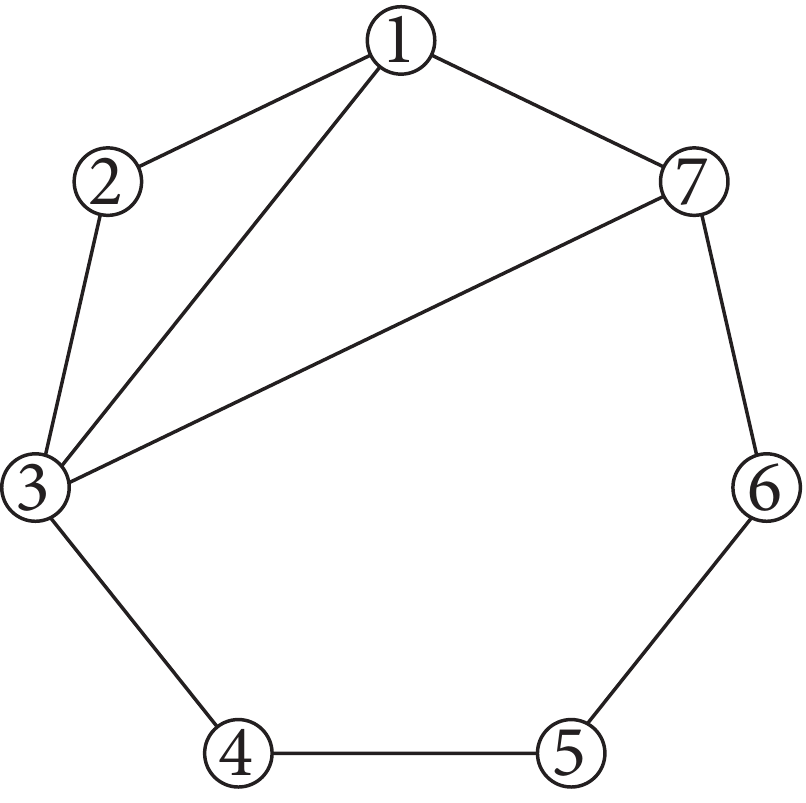}}
\vspace{-8pt}
\caption{\label{f:Cn(2)} The graph $C_7(2)$\vspace{-30pt}}
\end{center}\end{figure}

\begin{prop}\label{p:Cn+edges}
 For $n\ge 5$, $\zfg(C_n(2)) = P_{n-1}$.
\end{prop}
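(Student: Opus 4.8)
The plan is to determine all minimum zero forcing sets of $C_n(2)$ explicitly and then read off the adjacency structure of $\zfg(C_n(2))$. First I would record the degrees: vertex $3$ has degree $4$, vertices $1$ and $n$ have degree $3$, and every other vertex---namely $2,4,5,\dots,n-1$---has degree $2$; in particular $\delta(C_n(2))=2$ for $n\ge 5$. Since $\Z(G)\ge\delta(G)$ for every graph (the vertex $v$ performing the first force in any zero forcing set $B$ has a unique white neighbor at a moment when only $B$ is blue, so $\deg(v)-1\le|B|-1$, giving $\deg(v)\le|B|$), and since a direct check shows $\{1,2\}$ is a zero forcing set, I would conclude $\Z(C_n(2))=2$.

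The heart of the argument is to show that the minimum zero forcing sets are exactly the $n-1$ ``consecutive'' pairs $B_i=\{i,i+1\}$ for $1\le i\le n-1$. For necessity I would reuse the first-force observation: if $B=\{u,w\}$ is a zero forcing set of size two, the vertex performing the first force is blue, has a unique white neighbor, and (since only $u,w$ are blue at the start) has at most one blue neighbor; as $\delta\ge 2$ this forces it to have degree exactly $2$ and to be adjacent to the other element of $B$. Thus every size-two zero forcing set consists of a degree-$2$ vertex together with one of its two neighbors. Enumerating over the degree-$2$ vertices $2,4,5,\dots,n-1$ and their neighbors yields precisely the pairs $\{i,i+1\}$ with $1\le i\le n-1$; crucially, the wrap-around pair $\{n,1\}$ never arises, since neither $1$ nor $n$ has degree $2$. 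This is exactly the feature that will make $\zfg(C_n(2))$ a path rather than a cycle.

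Next I would verify sufficiency, that each $B_i=\{i,i+1\}$ really is a zero forcing set. This is a short case check on how forcing propagates: from $B_i$ one forces outward along the two arcs until vertex $3$ is reached (via $4\to 3$) and vertex $n$ is reached, after which $n\to 1$ and then $1\to 2$ complete the coloring. The precise order depends mildly on whether $i$ lies in the triangle region near $\{1,2,3,4\}$ or along the long arc $4,5,\dots,n-1$, but in every case all vertices become blue. Combining the two directions shows that the minimum zero forcing sets are exactly $B_1,\dots,B_{n-1}$.

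Finally I would identify the zero forcing graph. Two of these sets $B_i,B_j$ are adjacent in $\zfg(C_n(2))$ if and only if their symmetric difference has size two, i.e.\ if and only if they share exactly one vertex, which happens precisely when $|i-j|=1$. Hence $\zfg(C_n(2))$ is the path $B_1-B_2-\cdots-B_{n-1}$, so $\zfg(C_n(2))=P_{n-1}$. The only genuinely delicate point is the sufficiency case check in the third paragraph; the necessity direction and the final adjacency count both follow immediately from the first-force observation.
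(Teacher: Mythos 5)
Your proposal is correct and follows essentially the same route as the paper's proof: both use $\delta(C_n(2))\le\Z(C_n(2))$ for the lower bound, observe that the consecutive pairs $\{i,i+1\}$, $1\le i\le n-1$, are zero forcing sets, and argue that any size-two zero forcing set must consist of a degree-$2$ vertex together with one of its neighbors, which rules out the wrap-around pair $\{n,1\}$ and yields the path structure. You simply fill in the details (the first-force justification, the explicit enumeration, and the propagation check) that the paper leaves as observations.
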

\bpf Observe that  $2=\delta(C_n(2))\le\Z(C_n(2))$, and $\{k,k+1\}$ is a zero forcing set for $k=1,\dots,n-1$.   In order for a set of two vertices to be a zero forcing set, one of the vertices must have degree 2 and the other vertex must be a neighbor of the degree-2 vertex. Thus these are the only minimum zero forcing sets  and $\zfg(C_n(2)) = P_{n-1}$.
\epf

We have defined $C_n(2)$ with $\deg(1)=\deg(n)=3$ and $\deg(3)=4$ for convenience in the proof, but for $n\ge 7$, there $\lc\frac n 2\rc-2$ nonisomorphic graphs $G$ having $\zfg(G)=P_{n-1}$. These other graphs can be constructed as a cycle $C_n$ with the two additional edges $\{1,k\}$ and $\{n,k\}$ with $3\le k\le \lc\frac n 2\rc$.  

Since $P_2=K_2$ is the zero forcing graph of a path, we have found graphs $G$ having $\zfg(G)=P_n$ for all $n\ne 3$.  Since $P_3=K_{1,2}$, it is a consequence of the next result 
that there does not exist a graph $G$ such that $\zfg(G)=P_3$.

 \begin{prop}\label{p:no-K1n} There does not exist a graph $G$ such that $\zfg(G)=K_{1, r}$ for $r \geq 2$.
\end{prop}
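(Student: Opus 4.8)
The plan is to assume $\zfg(G)=K_{1,r}$ with $r\ge 2$ and to extract a contradiction from the rigidity of a star. First I would pin down $\Z(G)=r$: since $\zfg(G)$ is connected with $r+1$ vertices, Proposition~\ref{p:orderZplus1} gives $\Z(G)\le r$, while $K_{1,r}$ is triangle-free, so Proposition~\ref{p:DeltaZ} gives $\Delta(\zfg(G))=r\le \Z(G)$. Writing $B$ for the center and $B_1,\dots,B_r$ for the leaves, each leaf has the form $B_i=B\setminus\{p_i\}\cup\{q_i\}$ with $p_i\in B$ and $q_i\notin B$. The counting argument inside the proof of Proposition~\ref{p:DeltaZ} (each $(r-1)$-subset of $B$ lies in at most one minimum zero forcing set besides $B$) forces the $p_i$ to be distinct, hence $\{p_1,\dots,p_r\}=B$; and the non-adjacency of distinct leaves, $|B_i\triangle B_j|=4$, forces the $q_i$ to be distinct as well.

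The engine of the contradiction is a single observation about stars: any minimum zero forcing set that is a neighbor of a leaf must equal the center $B$. I would exploit this in two equivalent ways. First, if for some $i\ne j$ the ``double swap'' $C=B\setminus\{p_i,p_j\}\cup\{q_i,q_j\}$ is a minimum zero forcing set, then $|C\triangle B_i|=|C\triangle B_j|=2$ while $|C\triangle B|=4$, so $C$ is a vertex adjacent to two leaves but not to the center, which is impossible in $K_{1,r}$. Second, and more usably, neighbor trading (Remark~\ref{p:delta}) applied to a leaf $L$ produces a neighbor of $L$, which must therefore equal $B$; so whenever the first force of $L$ is performed by a vertex of degree $2$, the resulting trade is forced to reproduce $B$, a very restrictive equation. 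Thus the whole proof reduces to manufacturing either such a double swap or such a forbidden second neighbor of a leaf.

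To locate one I would use reversals. Because $\zfg(G)$ has at least three vertices, $\Z(G)=r<|V(G)|$, so any reversal $B'$ of $B$ is a minimum zero forcing set different from $B$, hence a leaf; then $|B\triangle B'|=2$ forces $B$ to have exactly one forcing chain of length at least two, say $x_1\to\cdots\to x_m$, together with $r-1$ singleton chains $s_1,\dots,s_{r-1}$ (and $r\ge 2$ guarantees at least one singleton). If $m=2$ then $\Z(G)=|V(G)|-1$, so $G$ is complete \cite{AIM} and $\zfg(G)=K_{r+1}\ne K_{1,r}$ by Proposition~\ref{p:Kn}; hence $m\ge 3$. The reversal leaf is $B'=\{x_m\}\cup\{s_1,\dots,s_{r-1}\}$, and the reversed list makes $x_m\to x_{m-1}$ its first force. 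If $\deg_G(x_m)=2$, then neighbor trading on $B'$ swaps out a singleton $s_j$ for $x_{m-1}$, producing a neighbor $B'\setminus\{s_j\}\cup\{x_{m-1}\}$ of $B'$ that contains both $x_m$ and $x_{m-1}$ (the latter $\notin B$ since $m\ge 3$), hence is not $B$ --- the forbidden second neighbor of a leaf, a contradiction.

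The hard part will be discharging the remaining case $\deg_G(x_m)=1$, together with the symmetric issues at the chain-start $x_1$ and at the singletons. Here I expect to argue that a pendant chain-end cannot coexist with the singleton structure: by tracking which vertex each leaf forces first, one shows that either $G$ collapses to a path (whence $\Z(G)=1\ne r$) or some leaf obtained by swapping out a singleton yields, via a second neighbor trade, a minimum zero forcing set adjacent to a leaf other than $B$. Converting these structural constraints into a clean contradiction across all degree-$1$ endpoint configurations is the technical heart of the argument and the step I expect to require the most care.
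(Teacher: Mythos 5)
Your proposal tracks the paper's own argument almost step for step up to the final case: $\Z(G)=r$ via Propositions \ref{p:orderZplus1} and \ref{p:DeltaZ}, the star structure with distinct swapped-out vertices $p_i$ and distinct swapped-in vertices $q_i$, the observation that any minimum zero forcing set adjacent to a leaf must equal the center, the reversal of the center producing exactly one chain $x_1\to\cdots\to x_m$ of length at least two plus $r-1$ singletons, and neighbor trading to dispose of the case $\deg_G(x_m)\ge 2$. (Write ``$\deg_G(x_m)\ge 2$'' rather than ``$=2$'': Remark \ref{p:delta} only needs degree at least two, nothing bounds $\deg_G(x_m)$ above, and your argument works verbatim in that generality.) All of this is correct, and in the paper's notation $x_m$ is the vertex $b_1$ that occurs in exactly one minimum zero forcing set.

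The genuine gap is the case you defer, $\deg_G(x_m)=1$. This is not a routine cleanup; it is the step the entire proof is built to reach, and your sketch for it (``either $G$ collapses to a path or some leaf yields a second neighbor trade'') states the desired conclusion without an argument. The paper closes this case with one explicit construction that your proposal lacks. The chain $x_m,\dots,x_1$ and the singletons $s_1,\dots,s_{r-1}$ partition $V(G)$, and $G$ is connected, so some singleton is adjacent to a chain vertex. Let $v$ be the \emph{first} such chain vertex in forcing order starting from $x_m$, say adjacent to the singleton $s_i$, and let $u$ be the successor of $v$ on the chain. Then $B'\cup\{u\}\setminus\{s_i\}$ is a zero forcing set: $x_m$ forces along the chain up to $v$ exactly as in the reversal process (by the minimality of $v$, no earlier chain vertex is adjacent to the still-white vertex $s_i$), at which point the unique white neighbor of $v$ is $s_i$ because $u$ is already blue, so $v$ forces $s_i$ and the chain forcing resumes. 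This set has size $r$, differs from $B'$ (it contains $u\notin B'$), is adjacent to the leaf $B'$ in $\zfg(G)$ (symmetric difference $\{u,s_i\}$), and differs from the center $B$ (it contains $x_m\notin B$) --- so it is exactly the ``forbidden second neighbor of a leaf'' you were trying to manufacture. That first-point-of-contact swap is the missing idea; without it (or an equally explicit substitute, including a word on why such a $v$ has a successor at all, i.e.\ why $v\ne x_1$), your proof is incomplete. Once it is in place, no separate analysis ``at the chain-start $x_1$ and at the singletons'' is needed, so that portion of your plan can be dropped.
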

\bpf Suppose that there is a graph $G$ such that $\zfg(G)=K_{1,r}$ for $r \geq 2$. By Proposition \ref{disjoint_union}, $G$ has at most one connected component of size at least $2$, so without loss of generality we may assume that $G$ is connected. By Proposition \ref{p:DeltaZ}, $Z(G)\ge r$ and by Proposition \ref{p:orderZplus1}, $r+1\ge \Z(G)+1$, so $\Z(G)= r$. \newpage

Let $B = \{a_1, \dots, a_r\}$ be the vertex of $\zfg(G)$ of maximum degree.  Without loss of generality, the zero forcing sets of $G$ are of the form $B_0 = B$ and $B_i=B\cup \left\{b_i \right\} \setminus \left\{a_i\right\}$ for each $i = 1, \dots, r$; note that $b_i,i=1,\dots,r$ are distinct and each $b_i$ appears in exactly one minimum zero forcing set of $G$. 

Each zero forcing set has a reversal.  Without loss of generality, a reversal of $B_0$ is $B_1$, so there is a zero forcing process starting with $B_1$ and ending with $B_0$  
in which $a_2, \dots, a_r$ do not perform a force.  Using this process, $G$ has $r$ disjoint forcing paths, with one path $P$ starting at $b_1$ and ending at $a_1$ and the other paths consisting of the single vertices $a_2, \dots, a_r$.  
If $\deg_G(b_1) \geq 2$, then $b_1$ is in another zero forcing set by Remark \ref{p:delta} (neighbor trading), which is a contradiction.  

So assume $\deg_G(b_1)=1$.  Since $G$ is connected, there is some $i\ge 2$ such that $a_i$ is adjacent in $G$ to a vertex of $P$.  Starting at $b_1$ and proceeding along $P$ from $b_1$ to $a_1$ in forcing order, let $v$ be the first vertex on $P$ that is adjacent to some  $a_i$ with $i \geq 2$ and let $u$ be its successor on $P$. Then $B_1 \cup \{u\} \setminus\{a_i\}$ is also a zero forcing set, which is a contradiction. \epf

While a star cannot be a zero forcing graph, stars of any size arise as induced subgraphs, such as in a hypercube. For any $d$, the hypercube $Q_d$ is a zero forcing graph for multiple base graphs. Since $Q_d=K_2\cp K_2\cp \dots \cp K_2$, it follows from Proposition \ref{disjoint_union} that $\zfg(K_2\sqcup K_2\sqcup\dots\sqcup K_2)=Q_d$.  The tree exhibited in Proposition \ref{dcube} is another example.

Recall that $\zfg(C_n)=C_n$.  But the cycle is not the only graph that has its zero forcing graph equal to a cycle: Let $C_n+e$ denote a cycle with one edge added.  
\begin{prop}\label{p:Cn+edge}
For $n\ge 4$, $\zfg(C_n+e) = C_n$.   \end{prop}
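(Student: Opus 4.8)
The plan is to pin down the minimum zero forcing sets of $C_n+e$ exactly and then read off the adjacency structure of $\zfg(C_n+e)$. Write $C_n+e$ as the cycle on $v_0,\dots,v_{n-1}$ (indices mod $n$) together with a chord $\{v_0,v_k\}$, where $2\le k\le n-2$ so that the chord joins non-adjacent cycle vertices (this is where $n\ge 4$ is used, to guarantee a genuine chord and a simple graph). The two endpoints $v_0,v_k$ are the only vertices of degree $3$, and all other vertices have degree $2$. Since $\delta(C_n+e)=2$, we have $\Z(C_n+e)\ge 2$, so it suffices to show $\Z(C_n+e)=2$ and to enumerate the size-$2$ zero forcing sets.

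First I would establish necessity: a size-$2$ set $\{a,b\}$ can begin the forcing process only if it is an edge of $C_n+e$ with a degree-$2$ endpoint. Indeed, the blue vertex performing the first force must have exactly one white neighbor; as its partner is the only other blue vertex, a degree-$3$ vertex would retain at least two white neighbors and cannot force first, while a degree-$2$ vertex can force first only when its partner is its (unique blue) neighbor. Since the chord $\{v_0,v_k\}$ has both endpoints of degree $3$, the only candidates are the $n$ cycle edges $\{v_i,v_{i+1}\}$, each of which does have a degree-$2$ endpoint (both endpoints having degree $3$ would force $v_0,v_k$ to be consecutive on the cycle, which $2\le k\le n-2$ forbids).

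The main technical step is sufficiency: I would show every cycle edge $\{v_i,v_{i+1}\}$ is a zero forcing set, which simultaneously gives $\Z(C_n+e)=2$ and shows the minimum zero forcing sets are exactly these $n$ edges. The argument is an arc-growth trace. The chord splits the cycle into the two arcs $v_0,\dots,v_k$ and $v_k,\dots,v_{n-1},v_0$. Starting from a contiguous blue arc whose endpoints have degree $2$, each endpoint forces its outside cycle-neighbor, so the blue arc grows in both directions until it fills the chord-arc containing $\{v_i,v_{i+1}\}$; in particular its boundary vertices $v_0$ and $v_k$ become blue (forced by their degree-$2$ inside neighbors), together with everything between them on that side. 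At that point $v_0$ and $v_k$ each have exactly one remaining white neighbor, namely their cycle-neighbor in the other arc (their other two neighbors now being blue), so each performs a force that launches the growth into the second chord-arc, which then fills completely. I expect the only real care here is bookkeeping the degree-$3$ endpoints $v_0,v_k$ correctly across the few positions of the starting edge (interior of an arc versus incident to $v_0$ or $v_k$); in every case the trace completes, so all $n$ cycle edges are minimum zero forcing sets and there are no others.

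Finally I would identify $\zfg(C_n+e)$. Two such sets $\{v_i,v_{i+1}\}$ and $\{v_j,v_{j+1}\}$ are adjacent in $\zfg(C_n+e)$ exactly when their symmetric difference has size $2$, i.e.\ when they share a vertex; since each $v_i$ lies on exactly two cycle edges, sharing a vertex is equivalent to the two edges being consecutive ($j\equiv i\pm 1$). Hence the vertices $\{v_0,v_1\},\{v_1,v_2\},\dots,\{v_{n-1},v_0\}$ of $\zfg(C_n+e)$ are joined in a single $n$-cycle, giving $\zfg(C_n+e)=C_n$.
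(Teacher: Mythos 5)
Your proof is correct and follows essentially the same route as the paper: lower-bound $\Z(C_n+e)$ by $\delta=2$, show the $n$ cycle edges are exactly the minimum zero forcing sets (ruling out other pairs because the first force requires a degree-$2$ vertex adjacent to its blue partner), and read off that consecutive edges give $\zfg(C_n+e)=C_n$. The only difference is that you spell out the arc-growth verification and the first-force necessity argument, which the paper states without detail.
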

\bpf Observe that  $2=\delta(C_n+e)\le\Z(C_n+e)$, and $\{1,n\}$ and $\{k,k+1\}$ for $k=1,\dots,n-1$ are all zero forcing sets.   In order for a set of two vertices to be zero forcing set, one of the vertices must have degree 2 and the other vertex must be a neighbor of the degree-2 vertex. Thus these are the only minimum zero forcing sets  and $\zfg(C_n+e) = C_{n}$.
\epf

The $H$-graph, shown in Figure \ref{f:H-graph}, is an acyclic graph that has the 4-cycle as its zero forcing graph. The minimum zero forcing sets for $H$ are $\{1,4\}, \{1,6\}, \{3,4\}$ and $\{3,6\}$.
\vspace{-3pt}
\begin{figure}[h!]\begin{center}
\scalebox{.5}{\includegraphics{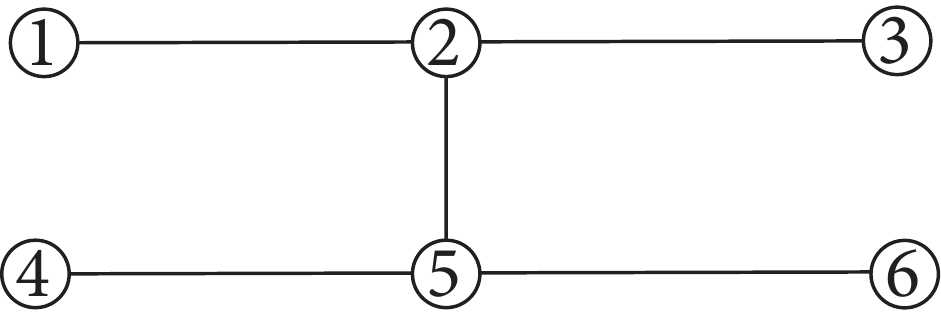}}
\vspace{-4pt}
\caption{\label{f:H-graph} The $H$-graph\vspace{-15pt}}
\end{center}\end{figure}


\section{Zero forcing graphs of forests}\label{s:forest}

In this section we show that the zero forcing graph of a forest is connected. We also show that the  zero forcing graph of a non-path tree must have  $C_3$ or $C_4$ as an induced subgraph, and construct trees that have hypercubes as their zero forcing graphs. The techniques developed for reconfiguration are then applied to prove  that every tree has a zero forcing set that consists entirely of leaves.\newpage

 A \emph{path cover} of a graph $G$ is a collection of induced paths in $G$ such that every vertex of $G$ 
is in exactly one path. The \emph{path cover number of $G$} is the minimum number of paths in a path cover of $G$, and is denoted by $\PCN(G)$.
Throughout this section, we use the following facts from \cite{AIM}: For a tree $T$, $\Z(T)=\PCN(T)$.  Every minimum path cover of a tree produces one or more minimum zero forcing sets by selecting any one endpoint from each path in the cover.  Every set of forcing chains of a zero forcing set of a graph is a path cover, and for a minimum zero forcing set of a  tree, the set of forcing chains  is a minimum path cover; the zero forcing set is a set of endpoints from this path cover.  Observe that each of these associations is usually many-to-one:  A zero forcing set often has more than one set of forcing chains, and a path cover that has $p$ paths of order two or more gives rise to $2^p$ zero forcing sets via endpoint choice.

\begin{lem}\label{induced_exterior}
If $T$ is a forest and $T'$ is a forest obtained by removing the vertices of a path $P$ in a minimum path cover of $T$, then $\zfg(T')$ is isomorphic to an induced subgraph of $\zfg(T)$.
\end{lem}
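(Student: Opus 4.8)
The plan is to exhibit an explicit isomorphism from $\zfg(T')$ onto an induced subgraph of $\zfg(T)$. Fix a minimum path cover $\mathcal{C}$ of $T$ containing $P$, write $k=\PCN(T)=\Z(T)$, and fix one endpoint $a$ of the path $P$. Define a map $\phi$ on $V(\zfg(T'))$ by $\phi(B')=B'\cup\{a\}$ for each minimum zero forcing set $B'$ of $T'$. I claim $\phi$ is injective, lands in $V(\zfg(T))$, and preserves both adjacency and non-adjacency, which is exactly what is needed.

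First I would record that $\Z(T')=k-1$. Since $T'=T-V(P)$ is an induced subforest, the paths of $\mathcal{C}\setminus\{P\}$ remain induced paths in $T'$ and cover $V(T')$, so $\PCN(T')\le k-1$. Conversely, any path cover of $T'$ together with $P$ is a path cover of $T$ (the paths of $T'$ stay induced in $T$ because $T'$ is induced, and $P$ is induced by choice), whence $\PCN(T')\ge k-1$. Thus $\PCN(T')=k-1$, and since $\Z=\PCN$ holds on forests (componentwise from the tree case), $\Z(T')=k-1$. In particular every minimum zero forcing set $B'$ of $T'$ has $|B'|=k-1$, so $|\phi(B')|=k=\Z(T)$, and $a\notin V(T')$ forces $a\notin B'$.

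The main step — and the one I expect to be the real obstacle — is showing that $\phi(B')=B'\cup\{a\}$ is actually a zero forcing set of $T$, not merely a set of the correct size. A direct forcing argument is awkward, since $T$ may contain edges between $V(P)$ and $V(T')$, so the forcing process in $T$ need not mimic the one in $T'$. Instead I would route the argument through path covers. The set of forcing chains of $B'$ is a minimum path cover $\mathcal{Q}'$ of $T'$ with $k-1$ paths, and $B'$ is precisely one endpoint chosen from each path of $\mathcal{Q}'$ (the chain-starting vertices). Then $\mathcal{Q}'\cup\{P\}$ is a path cover of $T$ with $k$ paths (again the paths of $\mathcal{Q}'$ stay induced in $T$, and $\mathcal{Q}'$ partitions $V(T')$ while $P$ covers $V(P)$), hence a minimum path cover of $T$. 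By the fact that selecting one endpoint from each path of a minimum path cover of a forest yields a minimum zero forcing set, the selection $B'\cup\{a\}$ — taking the $B'$-endpoints on $\mathcal{Q}'$ and the endpoint $a$ on $P$ — is a minimum zero forcing set of $T$. Hence $\phi$ maps into $V(\zfg(T))$.

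It remains to verify injectivity and that $\phi$ is an induced-subgraph isomorphism. Injectivity is immediate: if $B_1'\cup\{a\}=B_2'\cup\{a\}$, then since $B_1',B_2'\subseteq V(T')$ and $a\notin V(T')$, we get $B_1'=B_2'$. For the edges, note that because $a$ lies in both $\phi(B_1')$ and $\phi(B_2')$, the symmetric difference $\phi(B_1')\triangle\phi(B_2')$ equals $B_1'\triangle B_2'$ exactly. Consequently $\phi(B_1')$ and $\phi(B_2')$ differ in exactly two vertices (are adjacent in $\zfg(T)$) if and only if $B_1'$ and $B_2'$ do (are adjacent in $\zfg(T')$). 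Therefore $\phi$ is an isomorphism from $\zfg(T')$ onto the subgraph of $\zfg(T)$ induced by its image, completing the proof.
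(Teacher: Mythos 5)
Your proposal is correct and follows essentially the same route as the paper's proof: both use the map $B'\mapsto B'\cup\{a\}$ for a fixed endpoint $a$ of $P$, justify that its image consists of minimum zero forcing sets of $T$ via the correspondence between minimum path covers and minimum zero forcing sets of forests, and observe that adjacency is preserved exactly because the symmetric differences coincide. Your write-up merely makes explicit some details (the drop $\PCN(T')=\PCN(T)-1$ in both directions, and passing through the forcing chains of $B'$) that the paper leaves to the cited facts from the literature.
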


\begin{proof} 
Because $P$ is a path in a minimum path cover, the path cover  number of $T'$ is $\PCN(T')=\PCN(T)-1$, and thus $\Z(T')=\Z(T)-1$.  Fix  one endpoint $v$ of $P$.  If $B'$ is a minimum zero forcing set for $T'$, then $B'$ is a set of endpoints of a minimum path cover of $T'$,   $B' \cup \left\{v\right\}$ is a set of endpoints of a minimum path cover of $T$, and $B' \cup \left\{v\right\}$ is a minimum  zero forcing set of $T$.  If $B'_1$ and $B'_2$ are minimum zero forcing sets of $T'$, then $B'_1$ and $B'_2$ are adjacent in $\zfg(T')$ if and only if $B'_1 \cup \left\{v\right\}$ and $B'_2 \cup \left\{v\right\}$ are adjacent in $\zfg(T)$. Thus mapping $B'$ to $B' \cup \left\{v\right\}$  maps $\zfg(T')$ to an induced subgraph of $\zfg(T)$.
\end{proof}

We need some terminology.  If $G$ is a graph and $v\in V(G)$, then  the graph obtained from $G$ by removing $v$ is denoted by $G-v$.
 A \emph{high degree vertex} of a tree is a vertex of degree at least three. A tree is a {\em generalized star} if it has at most one high degree vertex; if a generalized star has a high degree vertex, this vertex is called the {\em center}.  A path $P$ of  a tree $T$ is a {\em pendent path} of vertex  $v$ if $P$ is a component of $T - v$ and  $P$ is connected in $T$ to $v$ by one of its end-points.  
A {\em pendent generalized star} of a tree $T$ is a connected induced subgraph $R$ of $T$ such that 
	there is exactly one high degree vertex $v$ of $T$ in $R$ ($v$ is called the {\em center} of $R$), all but one  of the components of $T - v$  are pendent paths (and one component is not a pendent path), and
	$R$ is induced by the vertices of the pendent path components of $T-v$  and $v$. 
It is established in  \cite{FH}  that any  tree  contains a pendent generalized star or is a generalized star.  

\begin{thm}\label{tree_con}
For every tree $T$, $\zfg(T)$ is connected.
\end{thm}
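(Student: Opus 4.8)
The plan is to translate everything into the language of minimum path covers. By the facts quoted from \cite{AIM}, a set $B$ is a minimum zero forcing set of a tree $T$ if and only if $B$ is obtained from some minimum path cover $\mathcal P$ of $T$ by selecting one endpoint of each path of $\mathcal P$, and conversely every such endpoint-selection is a minimum zero forcing set. Hence $V(\zfg(T))$ is exactly the set of all endpoint-selections of all minimum path covers of $T$, and I must show these are joined by symmetric-difference-two moves.

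I would first isolate two elementary reconfiguration moves. (A) Fix a minimum path cover $\mathcal P$. Flipping the chosen endpoint on a single path of order at least two swaps exactly one vertex for another, so it is an edge of $\zfg(T)$ joining two minimum zero forcing sets; consequently all endpoint-selections of a fixed $\mathcal P$ are connected in $\zfg(T)$ (they span a hypercube under endpoint flips). (B) Suppose two minimum path covers $\mathcal P$ and $\mathcal P'$ differ by a single reroute at a vertex $v$, that is, they agree except that the pair of branches at $v$ joined by the path through $v$ is changed from $\{A,B\}$ to $\{A,C\}$. Then selecting the far endpoints of $B$ and of $C$ yields one and the same vertex set in both covers, so $\mathcal P$ and $\mathcal P'$ possess a common endpoint-selection; thus their two families from (A) meet and lie in one component of $\zfg(T)$.

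With (A) and (B) in hand, connectivity of $\zfg(T)$ reduces to the purely combinatorial claim that any two minimum path covers of $T$ are linked by a sequence of single reroutes: chaining the common selections from (B) through the connected families from (A) then joins any two minimum zero forcing sets. I would prove reroute-connectivity by strong induction on $|V(T)|$, invoking the dichotomy of \cite{FH}. If $T$ is a generalized star, its minimum path covers are precisely the choices of which pair of legs to join at the center, and any two such choices are related by swapping one leg at a time. Otherwise $T$ has a pendent generalized star $R$ with center $v$ and pendent paths $Q_1,\dots,Q_m$ with $m\ge 2$; I would use reroutes at $v$ to bring two given covers into agreement on the routing at $v$, after which the legs not carrying the path through $v$ are forced to be single paths, so both covers restrict to minimum path covers of the strictly smaller forest obtained by deleting those legs --- this is where Lemma \ref{induced_exterior} and the inductive hypothesis apply --- and I finish by induction.

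The main obstacle is exactly this inductive gluing. I must verify that a reroute always produces another minimum (not merely legal) path cover, that the $v$-routings reachable by reroutes are rich enough to synchronize $\mathcal P$ and $\mathcal P'$ at $v$ (this is delicate when the branch one wishes to attach is covered by a path that does not have its $v$-end as an endpoint), and that after synchronizing, the reconfiguration of the part of $T$ hanging off $v$ that is supplied by induction consists of reroutes that remain valid while the deleted legs are held fixed. Moves (A) and (B) are clean and self-contained; the real work is the bookkeeping showing that the families coming from different minimum path covers cover all of $V(\zfg(T))$ and interconnect.
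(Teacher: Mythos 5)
Your two elementary moves are both correct, and they are in fact exactly the ingredients the paper uses: flipping the chosen endpoint of one path in a fixed minimum path cover is an edge of $\zfg(T)$, and two minimum covers that differ by a single reroute at a vertex share a common endpoint-selection (this is precisely the paper's construction of the set $B_0$ witnessing that its Class 1 and Class 3 intersect). The genuine gap is what comes after: you have reduced the theorem to the unproved claim that \emph{any two minimum path covers of a tree are joined by a sequence of single reroutes}, and your induction sketch for that claim does not close. Concretely, after synchronizing both covers so that the path through the center $v$ of a pendent generalized star joins two pendent legs, say $Q_1 v Q_2$, you propose to recurse on ``the strictly smaller forest obtained by deleting the legs not carrying the path through $v$.'' When the pendent generalized star has exactly two legs ($m=2$, i.e.\ $\deg_T(v)=3$, a completely typical case), there are no such legs: nothing is deleted and your ``strictly smaller'' forest is $T$ itself, so the induction makes no progress. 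The repair is to delete the whole covered star --- the path $Q_1 v Q_2$ together with the remaining legs --- and recurse on the non-pendent component $T'$, but then one must verify that the restrictions of the two synchronized covers are \emph{minimum} covers of $T'$ (a path-count argument using $\PCN(T)=\PCN(T')+m-1$), and that every reroute of covers of $T'$ supplied by the induction hypothesis lifts to a reroute of \emph{minimum} covers of $T$ with the star's paths held fixed. These verifications, together with the minimality-preservation and synchronization claims you explicitly defer in your last paragraph, are the actual substance of the proof rather than bookkeeping, so as written the argument is a plan rather than a proof.

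It is worth seeing how the paper escapes having to prove reroute-connectivity of covers at all. It fixes a single vertex $u$ whose deletion leaves at least three components, at most one of which is not a path, and sorts the minimum zero forcing sets into three classes according to whether some minimum cover realizing them contains $v_1\dots v_j$, contains $w_1\dots w_k$, or contains $v_j\dots v_1 u w_1\dots w_k$ as a path. Each class is connected by exactly your move (A) plus induction: deleting that one path leaves a smaller forest whose zero forcing graph is connected by the induction hypothesis and embeds as an induced subgraph of $\zfg(T)$ (Lemma \ref{induced_exterior}, with Corollary \ref{disjoint_connected} handling the disconnected case), and a single endpoint flip links the two embedded copies. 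Then the classes are shown to pairwise intersect by exactly your move (B), applied to one pair of covers. In other words, the paper only ever needs \emph{one} reroute between two specifically constructed covers, never a reroute path between two arbitrary covers, and that is why its induction closes cleanly where yours stalls.
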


\begin{proof} 
We prove this by strong induction on the order of $T$. Trees of order $2$ or $3$ are paths. Since the zero forcing graph of a path is connected, let $T$ be a tree that is not a path, and suppose for the induction hypothesis that $\zfg(T')$ is connected for all trees $T'$ of order less than $T$. 

Since $T$ is not a path, there exists a vertex $u \in V(T)$ such  $T-u$ has at least three connected components, at most one of which is not a path. We label the vertices of two of these paths $v_1 \dots v_j$ and $w_1 \dots w_k$ for $j, k \geq 1$ where $\dist(v_i, u) = i$ for each $1 \leq i \leq j$ and $\dist(w_i, u) = i$ for each $1 \leq i \leq k$. 

Let $\PC = \left\{R_1, \dots, R_z\right\}$ 
be a minimum path cover of $T$, where $z = \Z(T)$. Let $R_i$ be the path in $\PC$ that contains the degree-$1$ vertex $v_j$. The path $R_i$ must contain at least $j$ vertices (since  $\PC $ is a minimum path cover).
 The path $R_i$ consists of $v_1 \dots v_j$, or $R_i$ consists of $v_j \dots v_1 u w_1 \dots w_k$, or $R_i$ consists of $v_j \dots v_1 u$ and additional vertices but not $w_1 \dots w_k$, in which case there is a path $R'' \in \PC$ that consists of $w_1 \dots w_k$. Thus we can group all minimum zero forcing sets into three classes, where overlap is allowed between classes: Class 1 consists of minimum zero forcing sets $B$ such that $B$ is the set of endpoints of some minimum path cover containing $v_1 \dots v_j$ as a path in the cover, Class 2 consists of minimum zero forcing sets $B$ such that $B$ the set of endpoints of some minimum path cover containing $w_1 \dots w_k$ as a path in the cover, and Class 3 consists of minimum zero forcing sets $B$ such that $B$ is the set of endpoints of some minimum path cover containing $v_j \dots v_1 u w_1 \dots w_k$ as a path in the cover.  Since $T-u$ has at least three connected components, each  of the classes is nonempty.

Every minimum zero forcing set is in at least one of  Class 1, 2, or 3. We first prove that Class 1 and Class 3 are each connected in $\zfg(T)$. 
Then we show that Class 1 intersects Class 3. 
The proofs that Class 2 is connected and that Class 2 intersects Class 3 are similar. 
This will suffice to show that $\zfg(T)$ is connected.

Let $T'$ be the tree obtained from $T$ by removing $v_1, \dots ,v_j$. Class 1 consists of minimum zero forcing sets of the form $\left\{v_1\right\} \cup B'$ or $\left\{v_j\right\} \cup B'$ where $B'$ is any minimum zero forcing set of $T'$. Since $\zfg(T')$ is connected by induction hypothesis, the subgraph of $\zfg(T)$ restricted to minimum zero forcing sets of the form $\left\{v_1\right\} \cup B'$ is also connected by Lemma \ref{induced_exterior}. Similarly, the subgraph of $\zfg(T)$ restricted to minimum zero forcing sets of the form $\left\{v_j\right\} \cup B'$ is also connected. Moreover, if we fix a particular minimum zero forcing set $B'$ of $T'$, then there is an edge between $\left\{v_1\right\} \cup B'$ and $\left\{v_j\right\} \cup B'$ in $\zfg(T)$ if $v_1 \neq v_j$, so the subgraph of $\zfg(T)$ restricted to Class 1 is connected. 

Let $T''$ be the graph obtained from $T$ by removing $v_j \dots v_1 u w_1 \dots w_k$ ($T''$ is not necessarily connected). Class 3 consists of minimum zero forcing sets of the form $\left\{v_j\right\} \cup B''$ or $\left\{w_k\right\} \cup B''$ where $B''$ is any minimum zero forcing set of $T''$. For each connected component $X$ of $T''$, $\zfg(X)$ is connected by induction hypothesis, so $\zfg(T'')$ is also connected by Corollary \ref{disjoint_connected}. Thus the subgraph of $\zfg(T)$ restricted to minimum zero forcing sets of the form $\left\{v_j\right\} \cup B''$ is also connected. Similarly, the subgraph of $\zfg(T)$ restricted to minimum zero forcing sets of the form $\left\{w_k\right\} \cup B''$ is also connected. Moreover, if we fix a minimum zero forcing set $B''$ of $T''$, then there is an edge between $\left\{v_j\right\} \cup B''$ and $\left\{w_k\right\} \cup B''$ in $\zfg(T)$, so the subgraph of $\zfg(T)$ restricted to Class 3 is connected. 

Now we prove that Class 1 intersects Class 3. We consider a minimum path cover $\PC = \left\{R_1, \dots, R_z\right\}$ of $T$ where $R_1 = v_1 \dots v_j$.  If $ w_1 \dots w_k$ is a path in  $\PC$, then there is another path that includes $u$.   
If the  path that includes $u$ does not include $w_1$, then it must have the form $y_\ell\dots y_1 u y'_1\dots y'_{\ell'}$.  We can replace the paths $ w_1 \dots w_k$ and $y_\ell\dots y_1 u y'_1\dots y'_{\ell'}$ by $ w_k \dots w_1 u y_1\dots y_\ell$ and $y'_1\dots y'_{\ell'}$. 
So without loss of generality, assume $R_2$ contains $w_k \dots w_1 u$ and some number of other vertices including $q$, which is the farthest vertex in $R_2$ from $w_k$. Let $\PC' =  \left\{R'_1, \dots, R'_z\right\}$ be obtained from $\PC$ by setting $R'_i = R_i$ for each $i \geq 3$, deleting $w_k \dots w_1 u$ from $R_2$ to form the new path $R'_2$, and adding $w_k \dots w_1 u$ to $R_1$ to form the new path $R'_1 = v_j \dots v_1 u w_1 \dots w_k$. Let $B_0$ be a minimum zero forcing set obtained from $\PC$ by selecting a single endpoint from each of $R_3, \dots, R_z$, and selecting the endpoints $v_j$ and $q$ from $R_1$ and $R_2$ respectively. Then $B_0$ is also a minimum zero forcing set that corresponds to $\PC'$, since each vertex in $B_0$ is an endpoint of a distinct path in $\PC'$. Thus $B_0$ is in both Class 1 and Class 3.  
\end{proof}

The next corollary follows from Corollary \ref{disjoint_connected} and Theorem \ref{tree_con}.

\begin{cor}\label{forest-con}
For every forest $T$, $\zfg(T)$ is connected.
\end{cor}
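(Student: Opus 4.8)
The plan is to reduce the statement to its two stated ingredients by decomposing the forest into its connected components. A forest $T$ is by definition a disjoint union of trees, so I would write $T = T_1 \sqcup T_2 \sqcup \dots \sqcup T_m$, where each $T_i$ is a connected tree and $m$ is the number of components. By Theorem \ref{tree_con}, $\zfg(T_i)$ is connected for every $i = 1, \dots, m$. The goal is then to conclude that $\zfg(T)$ is connected.

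Next I would combine the components using Corollary \ref{disjoint_connected}, which is phrased for a disjoint union of two graphs. Since a forest may have more than two components, the one genuine (though routine) step is to extend that corollary from two summands to finitely many by induction on $m$. For the base case $m = 1$, the result is immediate from Theorem \ref{tree_con}. For the inductive step, I would write $T = (T_1 \sqcup \dots \sqcup T_{m-1}) \sqcup T_m$; by the induction hypothesis $\zfg(T_1 \sqcup \dots \sqcup T_{m-1})$ is connected, and $\zfg(T_m)$ is connected by Theorem \ref{tree_con}, so Corollary \ref{disjoint_connected} applied to these two graphs yields that $\zfg(T)$ is connected.

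I expect no real obstacle here: the content lies entirely in Theorem \ref{tree_con} (connectedness for a single tree) and in Proposition \ref{disjoint_union} / Corollary \ref{disjoint_connected} (how $\zfg$ behaves under disjoint union), both of which are available. The only thing to be careful about is the bookkeeping of the induction, namely that the disjoint union is associative at the level of base graphs so that grouping the first $m-1$ components together is legitimate, and that Corollary \ref{disjoint_connected} indeed applies once both factors are known to have connected zero forcing graphs. Given these, the proof is a short two-line argument once the inductive extension is noted.
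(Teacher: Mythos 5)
Your proposal is correct and matches the paper's approach exactly: the paper derives this corollary directly from Theorem \ref{tree_con} and Corollary \ref{disjoint_connected}, treating the extension to multiple components as immediate. Your induction on the number of components simply makes explicit the routine step the paper leaves implicit.
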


Although $\zfg(G)$ is always connected for base graphs $G$ with no cycles, this is not true in general if $G$ contains a cycle, as can be seen from Example \ref{e:zfg-discon-large} and the constructions of Section \ref{s:disc}.  Given this pivotal role of cycles in the base graph, it is interesting that zero forcing graphs of acyclic base graphs that are not paths have cycles, as established in the next result.


\begin{thm}\label{c3c4}
For every tree $T$, $\zfg(T)$ is $\left\{C_3,C_4\right\}$-free if and only if $T$ is a path.
\end{thm}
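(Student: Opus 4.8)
The plan is to prove the easy direction directly and the hard direction by producing an explicit $C_3$ or $C_4$ in $\zfg(T)$ whenever $T$ is not a path. For the (``if'') direction, if $T=P_n$ then $\zfg(T)=K_2$ by Remark~\ref{r:zfg-path} (and $\zfg(P_1)=K_1$), which has no cycles at all and so is trivially $\{C_3,C_4\}$-free. All the work is in the contrapositive of the converse: assuming $T$ is not a path, I want to exhibit a triangle or a four-cycle.

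The engine of the argument is an observation I would isolate as a lemma: \emph{if $T$ has a minimum path cover containing two distinct paths $R$ and $R'$ each of order at least two, then $\zfg(T)\supseteq C_4$.} Writing $a,a'$ for the endpoints of $R$ and $b,b'$ for those of $R'$ (distinct because $|R|,|R'|\ge 2$), and letting $C$ be one fixed endpoint chosen from each remaining path of the cover, the four sets $\{a,b\}\cup C$, $\{a',b\}\cup C$, $\{a',b'\}\cup C$, $\{a,b'\}\cup C$ are distinct minimum zero forcing sets (each is one endpoint per path, using the facts from \cite{AIM}), and two of them are adjacent in $\zfg(T)$ exactly when they differ in a single coordinate; hence they induce a four-cycle. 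So it suffices to show that every non-path tree either is a star or admits a minimum path cover with two paths of order at least two.

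To find the right vertex I would invoke the structure result of \cite{FH} used just above: $T$ is a generalized star or contains a pendent generalized star. Either way there is a vertex $u$ with $\deg_T(u)\ge 3$ having at least two pendent paths $P_1,P_2$. If $T=K_{1,r}$ with $r\ge 3$, then $\zfg(T)=K_r$ by Proposition~\ref{p:K1n}, which contains $C_3$ and we are done. Otherwise I build a minimum path cover by routing a single path $P_1\,u\,P_2$ through $u$ (of order at least three) and covering every other branch of $u$ by a minimum path cover of that branch; the second required long path then comes from a leg of length at least two left unmerged when $T$ is a generalized star that is not a star, or from the unique non-path branch $N$ of $u$ (which has $|V(N)|\ge 2$, so its minimum cover contains a path of order at least two) when $T$ has a pendent generalized star. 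Applying the lemma to these two long paths yields $C_4$.

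The step needing care is checking that the cover just described is actually minimum, since the lemma requires minimum zero forcing sets. I would verify $\PCN(T)=\big(\sum_B \PCN(B)\big)-1$, where $B$ ranges over the branches of $u$, by a counting argument: at most one path of any cover passes through $u$, so for an arbitrary cover the pieces it induces on the branches form path covers of the branches, and the single $u$-path merges pieces from at most two branches into one path, giving the lower bound $\sum_B \PCN(B)-1$; our construction attains this by merging the two pendent-path branches, where routing through $u$ is always possible because the $u$-adjacent vertex of a pendent path is an endpoint. The only other routine points are confirming that the two long paths are genuinely distinct paths of the cover and that the four endpoint-sets are pairwise distinct, both immediate. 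I expect the minimality bookkeeping to be the main obstacle; everything else follows directly from the lemma.
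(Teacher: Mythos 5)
Your proposal is correct, and it takes a genuinely different route from the paper's. The paper proves the hard direction by strong induction on the order of $T$: generalized stars with at least three leaves yield a triangle (the all-leaf minimum zero forcing sets form a clique), and otherwise it picks a vertex $u$ such that $T-u$ has at least two path components, splits on whether an \emph{arbitrary} minimum path cover routes a single path through both of them, and in each case either exhibits a $C_4$ directly or passes to a strictly smaller non-path tree or component via Lemma~\ref{induced_exterior}, Corollary~\ref{forest-con}, and the induction hypothesis. You avoid induction entirely: you isolate as a standalone lemma the fact that a minimum path cover containing two paths of order at least two forces an induced $C_4$ (the paper uses exactly this four-set construction, but only inside one subcase), and then for every non-path, non-star tree you \emph{construct} such a cover using the structure theorem of \cite{FH} --- merging two pendent paths through the center $u$ and covering every other branch minimally --- while stars are dispatched by $\zfg(K_{1,r})=K_r\supseteq C_3$ (Proposition~\ref{p:K1n}). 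The price of constructing the cover rather than analyzing an arbitrary one is that you must certify minimality, and your counting argument does this correctly: exactly one path of any cover contains $u$ and splits into at most two pieces when $u$ is deleted, giving $\PCN(T)\ge\sum_B\PCN(B)-1$ over the branches $B$ of $u$, and the merged cover attains this bound. The delicate points all check out: the branch $N$ that is not a pendent path has at least one edge, so any minimum cover of it contains a path of order at least two (two adjacent singletons could be merged), and in a non-star generalized star you can always choose which two legs to merge so that a leg of order at least two survives. One small terminological slip: $N$ need not be a ``non-path branch'' --- it can be a path attached to $u$ at an interior vertex, as in the $H$-graph --- but your argument only uses $|V(N)|\ge 2$, so nothing breaks. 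As for what each approach buys: the paper's induction reuses machinery already built for Theorem~\ref{tree_con} and never needs the minimality certification; your argument is constructive, self-contained modulo the \cite{AIM} and \cite{FH} facts, and proves slightly more, namely that every non-path tree other than a star has an induced $C_4$ (so $C_3$ is genuinely needed only for stars), and the identity $\PCN(T)=\sum_B\PCN(B)-1$ at a vertex with two pendent paths is a reusable fact in its own right.
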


\begin{proof}
If $T$ is a path, then the statement is clearly true, so we prove the other direction. We will prove by strong induction on the order of $T$ that $\zfg(T)$ has a $C_3$ or $C_4$ subgraph if $T$ is a non-path tree. 
Suppose that $T$ is a non-path tree. 
 If $T$ is a generalized star with at least three leaves, then the subgraph of $\zfg(T)$ restricted to minimum zero forcing sets that only include leaves is a complete graph of order at least three, so $\zfg(T)$ contains a triangle; this covers the base case of order four.    Note the  $H$-graph, shown in Figure \ref{f:H-graph} is a tree with $\zfg(T) = C_4$.
Suppose as the induction hypothesis that $\zfg(T')$ contains $C_3$ or $C_4$ for all non-path trees $T'$ of order less than $T$.

Now we assume that $T$ is not a generalized star. As in the last proof, we use the fact that for any tree $T$ that is not a path, there exists a vertex $u \in V(T)$ such that  $T-u$  has at most one non-path component and least two connected components that are paths. We label these two paths $v_1 \dots v_j$ and $w_1 \dots w_k$ for $j, k \geq 1$ where $\dist(v_i, u) = i$ for each $1 \leq i \leq j$ and $\dist(w_i, u) = i$ for each $1 \leq i \leq k$. 
We choose an arbitrary minimum path cover of $T$, and consider two cases. 

If there is a path with both $v_j$ and $w_k$ in the minimum path cover, then let $T'$ be the forest obtained from removing that path. We have that $\zfg(T')$ is connected by Corollary \ref{forest-con}. 
Suppose first that all connected components of $T'$ are paths.  Since $T$ is not a generalized star,  at least one of the components is a path  $P$ that is not pendent, and thus has at least two vertices. Denote the two distinct endpoints of $P$ by $x$ and $y$. Then there is a subset $S \subset V(T)$ of path endpoints and four vertices of $\zfg(T)$ of the form $S \cup \left\{v_j, x\right\}, S \cup \left\{v_j, y\right\}, S \cup \left\{w_k, x\right\}, S \cup \left\{w_k, y\right\}$ that form a $C_4$ in $\zfg(T)$. So now assume $T'$ has a connected component $C$ that is not a path. 
Then by the induction hypothesis $\zfg(C)$ contains $C_3$ or $C_4$. This implies that $\zfg(T)$ also contains $C_3$ or $C_4$, since $\zfg(T)$ has a subgraph isomorphic to $\zfg(C)$ by Lemma \ref{induced_exterior}.

For the second case, suppose that there is not a path in the minimum path cover with both $v_j$ and $w_k$. Then the minimum path cover has the path $v_1 \dots v_j$ or the path $w_1 \dots w_k$. Without loss of generality, suppose that the minimum path cover has the path $v_1 \dots v_j$. Let $T'$ be the subtree of $T$ obtained by removing $v_1 \dots v_j$. Then $T'$ is not a path because $T$ is not a generalized star, and $\zfg(T)$ has a subgraph isomorphic to $\zfg(T')$, so $\zfg(T)$ has a $C_3$ or $C_4$ by induction hypothesis.
\end{proof}

\begin{cor}
For every forest $T$, $\zfg(T)$ is $\left\{C_3,C_4\right\}$-free if and only if $T$ is a disjoint union of one or more paths with at most one path having order greater than one. 
\end{cor}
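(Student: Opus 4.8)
The plan is to reduce the forest case to the tree case of Theorem~\ref{c3c4} using the product formula of Proposition~\ref{disjoint_union}. First I would write $T=T_1\sqcup\cdots\sqcup T_m$ as the disjoint union of its connected components, each a tree, and apply Proposition~\ref{disjoint_union} repeatedly (the Cartesian product is associative) to obtain $\zfg(T)=\zfg(T_1)\cp\cdots\cp\zfg(T_m)$. The one structural fact I would use throughout is that each factor embeds as a subgraph of the product: fixing a single vertex in every coordinate but one produces an induced copy of $\zfg(T_i)$ inside $\zfg(T)$, so any $C_3$ or $C_4$ occurring in a single factor $\zfg(T_i)$ also occurs in $\zfg(T)$.

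For the ``if'' direction I would assume $T$ is a disjoint union of paths with at most one path of order greater than one. Then every isolated-vertex component contributes $\zfg(P_1)=K_1$, and at most one component is a path $P_n$ with $n\ge 2$ contributing $\zfg(P_n)=K_2$ (Remark~\ref{r:zfg-path}). Since forming a Cartesian product with $K_1$ leaves a graph unchanged, $\zfg(T)$ is either $K_1$ or $K_2$; both are acyclic and hence $\{C_3,C_4\}$-free.

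For the ``only if'' direction I would argue the contrapositive, splitting into the two ways the structural condition can fail. In the first case some component $T_i$ is a non-path tree, so $\zfg(T_i)$ contains a $C_3$ or $C_4$ by Theorem~\ref{c3c4}, and hence so does $\zfg(T)$ by the embedding fact. In the second case every component is a path but at least two of them, say $P_s$ and $P_t$ with $s,t\ge 2$, have order at least two; then $\zfg(P_s)=\zfg(P_t)=K_2$, so $\zfg(T)$ contains $K_2\cp K_2=C_4$ as a subgraph. Either way $\zfg(T)$ fails to be $\{C_3,C_4\}$-free.

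I expect no serious obstacle here, since the result is essentially a packaging of Theorem~\ref{c3c4} with the disjoint-union product formula. The only points needing care are verifying that negating the structural hypothesis produces exactly the two cases above, and checking that a short cycle in one Cartesian factor genuinely persists as a subgraph of the whole product (including the degenerate identity $K_2\cp K_2=C_4$).
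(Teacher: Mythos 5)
Your proof is correct and is essentially the argument the paper intends: the corollary is stated without proof as an immediate consequence of Theorem \ref{c3c4} together with the product formula of Proposition \ref{disjoint_union}, which is exactly your reduction. The supporting facts you supply---$\zfg(P_1)=K_1$, $K_2\cp K_2=C_4$, and that fixing all but one (or two) coordinates of a Cartesian product yields an induced copy of the corresponding factor (or product of two factors)---are the routine observations needed to make that deduction explicit, and your case split in the contrapositive correctly exhausts the ways the structural hypothesis can fail.
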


Based on Theorem \ref{c3c4}, it is natural to ask what other induced cycles can occur in $\zfg(T)$ for trees $T$. The next result shows that there is no bound on the lengths of induced cycles that occur in $\zfg(T)$ for trees $T$, and that any even length at least $4$ is possible.

\begin{prop}\label{dcube}
For each $d > 0$, there exists a tree $T$ such that $\zfg(T)$ is the $d$-dimensional hypercube.
\end{prop}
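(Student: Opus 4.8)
The plan is to exhibit an explicit tree $T_d$ that generalizes both the path $P_3$ (which realizes $Q_1=K_2$) and the $H$-graph of Figure \ref{f:H-graph} (which realizes $Q_2=C_4$). I would take $d$ disjoint copies of $P_3$, writing the $i$-th copy as $\ell_i - m_i - r_i$, and then string the middle vertices together into a path by adding the edges $m_i m_{i+1}$ for $i=1,\dots,d-1$. A one-line count ($3d$ vertices, $2d+(d-1)=3d-1$ edges, connected) confirms that $T_d$ is a tree, and for $d=2$ it is exactly the $H$-graph. The leaves of $T_d$ are precisely the $2d$ vertices $\ell_i, r_i$, and the design is arranged so that forming a minimum zero forcing set amounts to $d$ independent binary choices (pick $\ell_i$ or $r_i$ for each $i$), which is the combinatorial signature of $Q_d$.

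The heart of the argument is to show that $T_d$ has a \emph{unique} minimum path cover, namely $\{\ell_i m_i r_i : 1 \le i \le d\}$. First I would compute $\PCN(T_d)=d$: each of the $2d$ leaves $\ell_i, r_i$ must be an endpoint of whatever path covers it, so any path cover uses at least $d$ paths, and the natural cover into the $d$ copies of $P_3$ attains this bound; since $\Z(T)=\PCN(T)$ for trees, $\Z(T_d)=d$. For uniqueness, I would argue that no path in a minimum cover can run through two or more middle vertices. Indeed, an induced path from a leaf at $m_a$ to a leaf at $m_b$ with $a<b$ must pass through $m_a, m_{a+1}, \dots, m_b$, and this strands the remaining $2(b-a)$ leaves hanging off those middles, since each such leaf has its only neighbor already used; those leaves are then forced to be singleton paths. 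Replacing this long path together with its $2(b-a)$ forced singletons by the $b-a+1$ natural paths $\ell_k m_k r_k$ over the same vertices changes the path count from $1+2(b-a)$ to $b-a+1$, a strict decrease whenever $a<b$, contradicting minimality. Hence every path in a minimum cover is some $\ell_i m_i r_i$, and the minimum path cover is unique.

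With uniqueness in hand the conclusion follows from the tree facts quoted from \cite{AIM}: every minimum zero forcing set of a tree is obtained by selecting one endpoint from each path of a minimum path cover, and conversely every such selection is a minimum zero forcing set. Because the minimum path cover of $T_d$ is unique, its minimum zero forcing sets are exactly the $2^d$ sets $B_\epsilon = \{x_i^{\epsilon_i} : 1 \le i \le d\}$, where $\epsilon \in \{0,1\}^d$ and $x_i^0=\ell_i$, $x_i^1=r_i$. Two such sets have symmetric difference of size exactly $2$, i.e. $|B_\epsilon \triangle B_{\epsilon'}| = 2$, precisely when $\epsilon$ and $\epsilon'$ differ in a single coordinate; this is exactly the adjacency relation of $Q_d$, so $\zfg(T_d)=Q_d$.

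The step I expect to be the main obstacle is the uniqueness of the minimum path cover, since everything downstream depends on there being no ``extra'' minimum zero forcing sets beyond the $2^d$ endpoint choices (extra sets could add unwanted vertices or edges to $\zfg(T_d)$ and destroy the hypercube structure). Once the stranded-leaf counting rules out any path spanning two or more middles, the remainder is routine bookkeeping matching endpoint choices to hypercube coordinates.
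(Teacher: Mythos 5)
Your proposal is correct and is essentially the paper's own proof: your tree $T_d$ ($d$ copies of $P_3$ with the middle vertices strung into a path) is exactly the paper's $G_d$, obtained from $P_d$ by adding two leaves to every vertex, and both arguments rest on showing $\Z(T_d)=d$ and that the minimum path cover is unique, so the $2^d$ endpoint choices give $Q_d$. The only difference is that you supply the leaf-counting and path-replacement details that the paper leaves implicit.
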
 

\begin{proof}
Let $G_d$ be the tree obtained from $P_d$ by adding two leaves to every vertex. Then $\Z(G_d) = d$ and the only minimum path cover of $G_d$ is the path cover where every path consists of a vertex from $P_d$ and its two leaves. Thus $\zfg(G_d)$ is the $d$-dimensional hypercube.
\end{proof}

The $d$-dimensional hypercube contains an induced $2d$-cycle for each $d \geq 2$: For example, consider the $2d$ binary strings of length $d$ consisting of the $d$ binary strings with $d-1$ zeroes and a single one, the $d-1$ binary strings with $d-2$ zeroes and $2$ adjacent ones, and the string with $d-2$ zeroes and ones at the beginning and end. These $2d$ strings form an induced $2d$-cycle in the $d$-dimensional hypercube.  

\begin{cor}
For each even $d \geq 4$, there exists a tree $T$ such that $\zfg(T)$ contains an induced cycle of length $d$.
\end{cor}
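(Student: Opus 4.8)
The plan is to combine the two preceding results into an immediate consequence, so the main work is already done and I only need to assemble the pieces cleanly. The final statement, the corollary asserting that for each even $d \geq 4$ there is a tree $T$ with an induced $d$-cycle in $\zfg(T)$, follows by writing $d = 2m$ with $m \geq 2$ and invoking Proposition \ref{dcube} together with the displayed fact about induced $2m$-cycles in the hypercube.

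First I would set $m = d/2$, noting $m \geq 2$ since $d \geq 4$ is even. By Proposition \ref{dcube}, there exists a tree $T$ (namely $G_m$, obtained from $P_m$ by adding two leaves at every vertex) such that $\zfg(T) = Q_m$, the $m$-dimensional hypercube. Second, I would appeal to the explicit construction stated just before the corollary, which exhibits $2m$ binary strings of length $m$ forming an induced $2m$-cycle in $Q_m$; this requires $m \geq 2$ for the list of strings to be well-defined and genuinely cyclic. Since $2m = d$, this produces an induced cycle of length $d$ in $\zfg(T)$, completing the argument.

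The only point demanding any care is verifying that the $2m$ strings really induce a $2m$-cycle rather than merely containing one as a subgraph, i.e., that no two nonconsecutive strings in the cyclic order are at Hamming distance one. This is the content of the sentence preceding the corollary, so I would either cite it directly or, if a self-contained check is preferred, briefly confirm that consecutive strings in the listed order differ in exactly one coordinate while all other pairs differ in at least two. I do not anticipate a genuine obstacle here: both ingredients are in hand, and the corollary is a routine specialization. The proof can therefore be stated in a single short paragraph.

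\begin{proof}
Write $d = 2m$ with $m \geq 2$. By Proposition \ref{dcube}, there is a tree $T$ with $\zfg(T)$ equal to the $m$-dimensional hypercube $Q_m$. As noted above, $Q_m$ contains an induced $2m$-cycle for each $m \geq 2$. Hence $\zfg(T)$ contains an induced cycle of length $2m = d$.
\end{proof}
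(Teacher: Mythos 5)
Your proposal is correct and follows exactly the paper's (implicit) argument: the paper states this corollary immediately after Proposition \ref{dcube} and the remark exhibiting an induced $2d$-cycle in the $d$-dimensional hypercube, intending precisely the substitution $d = 2m$ that you carry out. Nothing is missing; your write-up simply makes explicit what the paper leaves as an immediate consequence.
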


Proposition \ref{dcube} also shows that there is no bound on the number of induced $4$-cycles that can occur in $\zfg(T)$ for a tree $T$, even when $\zfg(T)$ has no $3$-cycles. 

Another natural problem is to determine which trees have the same zero forcing graphs. The next result provides a partial answer.

\begin{prop}
Suppose that $T$ is a tree with vertices $u, w$ each of degree $2$ that have a common neighbor $v$ also of degree $2$. Then $T$ has the same minimum zero forcing sets as the tree $T'$ obtained by removing $v$ and replacing it with an edge between $u$ and $w$, so $\zfg(T) = \zfg(T')$.
\end{prop}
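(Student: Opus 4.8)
The plan is to use the path-cover description of minimum zero forcing sets of a tree from \cite{AIM}. First I would record the geometry: $T$ is exactly $T'$ with the edge $\{u,w\}$ subdivided by the new vertex $v$, and $u,w$ have degree $2$ in both $T$ and $T'$. Write $u'$ for the neighbor of $u$ other than $v$ and $w'$ for the neighbor of $w$ other than $v$; these exist because $u,w$ have degree exactly $2$, and since $T$ is a tree they are distinct from one another and from $u,v,w$ (a coincidence would create a triangle or a $4$-cycle). Thus $u'\,u\,v\,w\,w'$ is a bare path of degree-$2$ vertices in $T$. Since every minimum zero forcing set of a tree is a choice of one endpoint from each path of some minimum path cover, and conversely every set of forcing chains of a minimum zero forcing set is a minimum path cover, it suffices to set up an endpoint-preserving bijection between the minimum path covers of $T$ and those of $T'$ whose only effect is to insert or delete $v$ in the interior of a single path.

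The key step is the claim that in \emph{every} minimum path cover of $T$, the vertex $v$ is an interior vertex of a path using both edges $\{u,v\}$ and $\{v,w\}$, i.e. that path contains $u\,v\,w$ as a subpath. I would prove this by a merging/minimality argument: if some minimum path cover failed to use, say, the edge $\{u,v\}$, then $u$ and $v$ lie on distinct paths, and each has a free end there (for $u$ the only other incident edge is $\{u',u\}$, and for $v$ the only other incident edge is $\{v,w\}$). Joining those two paths across the tree edge $\{u,v\}$ then yields a single path—automatically induced, since a tree has no chords—and lowers the number of paths, contradicting minimality. The same conclusion holds for $\{v,w\}$. In particular $v$ is never a path endpoint, so $v$ lies in no minimum zero forcing set of $T$. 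Running the identical argument inside $T'$ (where $u,w$ are adjacent degree-$2$ vertices) shows that the edge $\{u,w\}$ belongs to every minimum path cover of $T'$.

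Granting the claim, the bijection is suppression of $v$: in a minimum path cover of $T$, replace the subpath $u\,v\,w$ by the single edge $\{u,w\}$ of $T'$, leaving all other paths untouched; the inverse subdivides $\{u,w\}$ by reinserting $v$. This map is size-preserving and, because $v$ is interior, leaves each path's pair of endpoints unchanged. Since all minimum path covers of $T$ use $u\,v\,w$ and all minimum path covers of $T'$ use $\{u,w\}$, the map restricts to a size-preserving bijection between the two families; in particular $\PCN(T)=\PCN(T')$, so $\Z(T)=\Z(T')$ and minimum covers correspond to minimum covers. Because endpoints are preserved and $v$ is never an endpoint, the resulting endpoint-selections are literally the same subsets of $V(T)\setminus\{v\}=V(T')$. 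Hence $T$ and $T'$ have exactly the same collection of minimum zero forcing sets. Finally, adjacency in a zero forcing graph depends only on the sets themselves (symmetric difference of size two), so identical vertex sets force identical edge sets and $\zfg(T)=\zfg(T')$.

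I expect the merging argument of the key step to be the main obstacle: one must check that the two paths being joined are genuinely distinct and that each presents a free end to the relevant edge, and that the conclusion holds for \emph{every} minimum path cover rather than merely some. It is exactly this universality that upgrades a mere isomorphism $\zfg(T)\cong\zfg(T')$ to the equality of the underlying vertex sets asserted in the statement. The remaining points—that a path following tree edges is automatically induced, and that the suppression map and its inverse are well defined—are routine.
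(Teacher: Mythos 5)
Your proof is correct and follows essentially the same route as the paper's: both rest on the path-cover characterization of minimum zero forcing sets of trees and a merging/minimality argument showing that $v$ is never a path endpoint in any minimum path cover of $T$ (equivalently, every such cover contains $u\,v\,w$ inside a path) while every minimum path cover of $T'$ uses the edge $\{u,w\}$, after which suppressing or reinserting $v$ gives the endpoint-preserving, size-preserving correspondence. Your write-up merely makes explicit some details the paper leaves implicit, such as the distinctness of the two paths being merged, the fact that paths in trees are automatically induced, and the preservation of minimality under the bijection.
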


\begin{proof}
Any minimum zero forcing set of $T$ (respectively,  $T'$) can be obtained from choosing a single endpoint from each path in a minimum path cover of $T$ (respectively,  $T'$). No minimum path cover of $T$ has a path with endpoint $v$, since otherwise that path could be combined with any path whose endpoint is adjacent to $v$ to make a path cover with fewer paths. Thus every minimum path cover of $T$ can be transformed into a path cover of $T'$ with the same endpoints. 

Moreover no minimum path cover of $T'$ has both a path with endpoint $u$ and a different path with endpoint $w$, or else these paths could be combined to make a path cover with fewer paths. Thus every minimum path cover of $T'$ can be transformed into a path cover of $T$ with the same endpoints. This implies that $T$ and $T'$ have the same minimum zero forcing sets.
\end{proof}

The last result is not true in general for non-tree graphs if there is a cycle that contains $u, v, w$, since two zero forcing paths that cover a cycle cannot be merged in general to make one zero forcing path that covers a cycle. For example, the result would fail to hold for $C_n$, because $\zfg(C_n)=C_n$.

It is often the case that the study of reconfiguration leads to techniques that are useful in the study of the original problem.  We  now apply exchange ideas developed here in the poof of Theorem \ref{t:leaf-zfs}.  An algorithm is presented in \cite{FH} in which the path cover number of a tree $T$ is computed by identifying a pendent generalized star $R$, covering $R$ with paths, deleting the vertices of $R$ from $T$, and repeating until an empty graph or generalized star is obtained.  In other words, a minimum path cover  of $T$ can be found by choosing a pendent generalized star $R$, removing it to obtain a tree $T'$, and separately covering $T'$ and $R$.  
This  is the 
approach  taken in the next proof.

\begin{thm}\label{t:leaf-zfs}
For any tree $T$, there exists a minimum path cover of $T$ in which every path has a leaf of $T$.
\end{thm}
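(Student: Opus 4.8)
The plan is to induct on $|V(T)|$, peeling off a pendent generalized star exactly as in the path-cover algorithm of \cite{FH} described above, and tracking which path endpoints are leaves. For the base case I would take $T$ to be a generalized star. If $T$ is a path, its one-path cover is $T$ itself and both endpoints are leaves. If $T$ has a center $c$ of degree $d\ge 3$ with pendent-path legs $L_1,\dots,L_d$, I would pair two legs into the single path $L_i$--$c$--$L_j$ and leave the remaining $d-2$ legs as their own paths, giving $d-1$ paths, each ending at a leaf of $T$. Since $\PCN(T-c)=d$ and deleting $c$ from any path of a cover splits it into at most two pieces, $\PCN(T)\ge d-1$, so this cover is minimum.

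For the inductive step, assume $T$ is not a generalized star, so by \cite{FH} it contains a pendent generalized star $R$ with center $v$; let $L_1,\dots,L_m$ with $m\ge 2$ be the pendent-path legs and let $W$ be the unique non-path component of $T-v$, with $w_0$ the neighbor of $v$ in $W$. I would set $T'=W$, a smaller tree, and apply the induction hypothesis to obtain a minimum path cover $\mathcal{Q}$ of $W$ in which every path ends at a leaf of $W$. Two counting facts drive the minimality argument: path cover number is additive over components, so $\PCN(T-v)=m+\PCN(W)$, and removing $v$ from a minimum cover of $T$ increases the path count by at most one, so $\PCN(T)\ge m+\PCN(W)-1$. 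I would then build a cover meeting this bound by keeping $\mathcal{Q}$ on $W$, routing one path as $L_i$--$v$--$L_j$, and leaving the other legs as their own single-leg paths; this adds exactly $m-1$ paths to $\mathcal{Q}$ and is therefore minimum.

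The main obstacle, and the only genuine content beyond bookkeeping, is that a leaf of $W$ need not be a leaf of $T$: the only vertex whose degree changes when $v$ is reattached is $w_0$, which is a leaf of $W$ precisely when $\deg_T(w_0)=2$. If no path of $\mathcal{Q}$ is forced to use $w_0$ as its \emph{only} leaf-of-$W$ endpoint, then every path of $\mathcal{Q}$ already ends at a genuine leaf of $T$ and the construction above finishes the proof. In the remaining case, where the path $P$ of $\mathcal{Q}$ through $w_0$ has $w_0$ as an endpoint and no other leaf endpoint, I would instead absorb $v$ and one leg into $P$, replacing $P$ by the induced path $L_i$--$v$--$w_0$--$\cdots$; this is genuinely induced because $v$'s only neighbors among $W\cup L_i$ are $w_0$ and the first vertex of $L_i$. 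The rerouted path now ends at the far end of $L_i$, a leaf of $T$, and the remaining $m-1$ legs become single-leg paths, again adding $m-1$ paths to $\mathcal{Q}$ and keeping the cover minimum. In verifying correctness I would only need to confirm that $w_0$ is never itself a leaf of $T$ (it has a neighbor inside the at-least-two-vertex tree $W$, so $\deg_T(w_0)\ge 2$), so that no path of the final cover can fail to end at a leaf of $T$; this completes the induction.
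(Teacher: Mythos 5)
Your proposal is correct and takes essentially the same route as the paper's proof: strong induction on the order of $T$, peeling off a pendent generalized star with center $v$, applying the induction hypothesis to the non-path component $W$, and then using the same two constructions (either cover the star separately as $L_i$--$v$--$L_j$ plus single legs, or extend the path of $W$ ending at $w_0$ through $v$ into one leg). The only differences are matters of detail rather than approach: you key the case split on whether $w_0$ is a path's \emph{only} leaf-of-$W$ endpoint instead of on whether $w_0$ is an endpoint at all, and you verify minimality by an explicit count ($\PCN(T)\ge \PCN(T-v)-1 = m+\PCN(W)-1$) where the paper leans on the path-cover algorithm of \cite{FH}.
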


\begin{proof}
The proof is by strong induction on the order of $T$.  The theorem is clearly true for  generalized stars (including paths), so we suppose that $T$ is not a generalized star and that every tree $T'$ of order less than $T$ has a minimum path cover  in which every path has a leaf of $T'$.  
Choose a pendent generalized star $R$ of $T$ with center $u$, denote the path components of $T-u$ by $R_1,R_2,\dots,R_k$,  let $T'$ be the  component of $T-u$ that is not a pendent path, and let $w$ be the neighbor of $u$ in $T$ that is in   $T'$.  By the induction hypothesis, there is a minimum path cover $\PC=\{P^{(1)},\dots,P^{(p)}\}$ of $T'$ in which every path has a leaf of $T'$.  Without loss of generality, $P^{(1)}$ is the path in this cover that contains $w$.  If $w$ is not an endpoint of $P^{(1)}$, then $P^{(1)}$ has a leaf endpoint in $T$.  Thus $\{P^{(1)},\dots,P^{(p)}\}\cup\{R_1uR_2,R_3,\dots,R_k\}$ is a minimum path cover of $T$ in which  every path has a leaf of $T$.  
 Now suppose $w$ is an endpoint of $P^{(1)}$, so $P^{(1)}=Uw$ where $U$ is a nonempty subpath of $P^{(1)}$.  Then $\{P^{(2)},\dots,P^{(p)}\}\cup\{U w u R_1,R_2, R_3,\dots,R_k\}$ is a minimum path cover of $T$ in which  every path has a leaf of $T$.  
\end{proof}

Since  a path cover of a forest is the union of path covers of the trees that are its connected components, Theorem \ref{t:leaf-zfs} extends to forests.  

\begin{cor}\label{zfsleaf}
For any forest $T$, $\zfg(T)$ contains a vertex consisting entirely of leaves, i.e. $T$ has a minimum zero forcing set in which each vertex is a  leaf.
\end{cor}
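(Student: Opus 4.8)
The final statement to prove is Corollary \ref{zfsleaf}: for any forest $T$, the zero forcing graph $\zfg(T)$ contains a vertex consisting entirely of leaves, equivalently $T$ has a minimum zero forcing set in which every vertex is a leaf.

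The plan is to derive this directly from Theorem \ref{t:leaf-zfs} together with the correspondence between minimum path covers and minimum zero forcing sets that is recalled at the start of Section \ref{s:forest}. First I would invoke Theorem \ref{t:leaf-zfs} to obtain, for each tree, a minimum path cover in which every path contains a leaf of that tree. The key observation is that, as noted in the excerpt, every minimum path cover of a tree produces a minimum zero forcing set by selecting one endpoint from each path; the task is to make sure we can select the endpoints so that each chosen endpoint is itself a leaf, not merely that the path passes through some leaf.

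The main step is therefore to argue that we may take the selected endpoint of each path to be a leaf of $T$. Since every path $P$ in the cover supplied by Theorem \ref{t:leaf-zfs} contains a leaf $\ell$ of $T$, and a leaf has degree one, $\ell$ cannot be an interior vertex of any induced path in the cover (an interior vertex of a path has at least two neighbors within that path). Hence $\ell$ must be an endpoint of $P$, and I choose $\ell$ as the representative endpoint for $P$. Doing this for every path in the cover yields a set $B$ of endpoints, one per path, each of which is a leaf of $T$; by the cited facts, $B$ is a minimum zero forcing set of $T$, and by construction it consists entirely of leaves.

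Finally I would extend this from trees to forests. As remarked just before the corollary, a path cover of a forest is the disjoint union of path covers of its connected components, so applying the tree case to each component and taking the union of the resulting leaf-only minimum zero forcing sets gives a minimum zero forcing set of the forest consisting entirely of leaves. I do not expect a genuine obstacle here: the only subtlety is the elementary but essential point that a leaf lying on an induced path must be an endpoint of that path, which is what upgrades ``every path has a leaf'' to ``every path has a leaf available as its chosen endpoint.'' The restatement in terms of $\zfg(T)$ containing such a vertex is then immediate from the definition of the zero forcing graph.
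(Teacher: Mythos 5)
Your proposal is correct and follows exactly the paper's route: it derives the corollary from Theorem \ref{t:leaf-zfs} via the cited fact that selecting one endpoint from each path of a minimum path cover yields a minimum zero forcing set, and extends to forests component-wise just as the remark preceding the corollary does. The only detail you add explicitly --- that a leaf of $T$ lying on an induced path must be an endpoint of that path --- is precisely the step the paper leaves implicit, and your justification of it is sound.
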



\section{Computational results}

In this section, we prove several computational results about minimum zero forcing sets and path covers by using two lemmas. The first lemma shows that we can check in polynomial time in the order of a given graph whether a given set of vertices is a zero forcing set of the graph. It uses Algorithm \ref{zfscheck} to determine whether a given set of vertices is a zero forcing set.

\begin{algorithm}
\caption{Check whether $S_0$ is a zero forcing set of a graph $G$}
\label{zfscheck}
\begin{algorithmic}
\STATE Initialize $S=S_0$.
\WHILE{ $S \neq V(G)$}
\FOR{$v \in S$}
\IF{$v$ has exactly $1$ neighbor $w$ in $V(G)-S$}
\STATE Add $w$ to $S$.
\STATE Exit for-loop and go back to the while-statement.
\ENDIF
\ENDFOR
\STATE Exit while-loop, return {not a zero forcing set} (since  $S \neq V(G)$ and no vertex can force).
\ENDWHILE
\STATE Return {is a zero forcing set} (since $S = V(G)$).
\end{algorithmic}
\end{algorithm}

\begin{lem}\label{zfspoly}
For graphs $G$ of order $n$,  whether a given subset $S \subset V(G)$ is a zero forcing set of $G$ can be determined in $O(n^3)$ operations.
\end{lem}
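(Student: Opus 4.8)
The plan is to analyze Algorithm \ref{zfscheck} and bound the number of operations it performs. First I would argue that the while-loop can execute at most $n$ times: each successful pass through the body adds exactly one new vertex $w$ to $S$ (via the \textbf{if}-branch), and since $|S_0|\ge 0$ and $S$ can never exceed $|V(G)|=n$, the loop terminates after at most $n$ vertex-additions; if on some pass no vertex can force, the loop exits immediately and reports failure. So the outer \textbf{while} contributes a factor of $O(n)$.

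Next I would bound the cost of a single pass through the while-loop. Inside the loop, the \textbf{for}-loop ranges over the (at most $n$) vertices $v\in S$, and for each such $v$ we must check whether $v$ has exactly one neighbor in $V(G)-S$. Counting the neighbors of $v$ lying outside $S$ can be done in $O(n)$ operations (for instance, scanning the row of the adjacency matrix corresponding to $v$, or equivalently traversing the adjacency list of $v$ and testing membership in $S$). Thus each pass through the while-loop costs $O(n)\cdot O(n)=O(n^2)$ operations in the worst case, namely when we must examine every vertex of $S$ and count all its neighbors before either finding a force or concluding that none exists.

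Multiplying the $O(n)$ bound on the number of iterations of the while-loop by the $O(n^2)$ cost per iteration yields a total of $O(n^3)$ operations, which is the claimed bound. I would remark that maintaining a membership array (or boolean vector) for $S$ makes the membership test ``$w\in V(G)-S$'' an $O(1)$ operation, which is what keeps the per-vertex neighbor count at $O(n)$ rather than something larger; this bookkeeping detail is worth stating explicitly so the reader sees why the inner test is constant-time. Finally, when the loop terminates, we simply compare $S$ to $V(G)$ and return the appropriate answer, which is negligible.

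I do not anticipate a genuine obstacle here, since the statement is a routine running-time analysis; the only point requiring a little care is justifying that each iteration of the while-loop strictly increases $|S|$ (so that there are at most $n$ iterations) and that no iteration is entered after $S=V(G)$. Being explicit that the algorithm is correct — that is, that it returns ``is a zero forcing set'' exactly when $S_0$ forces all of $V(G)$, which follows from the fact noted earlier in Section \ref{s:intro} that the final blue set is independent of the order of forces — also deserves a sentence, so that the lemma is not merely about an algorithm's speed but about correctly deciding the zero-forcing property.
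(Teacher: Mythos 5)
Your proposal is correct and follows essentially the same route as the paper: both analyze Algorithm \ref{zfscheck} by bounding the while-loop by $n$ iterations, the inner for-loop by $n$ iterations, and the per-vertex neighbor test by $O(n)$ operations (the paper precomputes each $N(v)$ from the edge list in $O(n^2)$ operations, while you achieve the same effect with an adjacency-matrix row scan plus a membership array), yielding $O(n^3)$ in total. Your added sentence on correctness (that the final blue set is independent of the order of forces) is a harmless refinement the paper leaves implicit.
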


\begin{proof}
Given the graph $G$ as a list of edges, run through the list in order to compute the set of neighbors $N(v)$ of each vertex $v$. This computation takes $O(n^2)$ operations since there are at most $\binom{n}{2}$ edges in $G$ and each edge updates the neighborhoods of two vertices. Now we use Algorithm \ref{zfscheck} to determine whether a given subset $S \subset V(G)$ is a zero forcing set of $G$. 

The while-loop runs at most $n$ times since at most $n$ elements can be added to the set $S$. The for-loop runs at most $n$ times since $S$ always has size at most $n$. Checking the if-condition in the interior of the for-loop takes $O(n)$ operations since we already calculated the neighborhood of each vertex. Thus checking if $S$ is a zero forcing set takes $O(n^3)$ operations.
\end{proof}

Lemma \ref{pgspoly} shows that in any non-path tree, we can find the center of a generalized star or a pendent generalized star of the tree in polynomial time in the order of the tree.  It uses Algorithm \ref{gpscheck} to find the center of a generalized star or a pendent generalized star of a non-path tree.  Note that the algorithm returns additional information that is used the proof of Theorem \ref{t:PC-leaf}.

\begin{lem}\label{pgspoly}
For non-path trees $T$ of order $n$,  the center of a generalized star or pendent generalized star of $T$ can be found in $O(n^2)$ operations.
\end{lem}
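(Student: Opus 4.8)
The plan is to give a constructive argument that traverses the tree and identifies a generalized-star structure in polynomial time. Recall that a \emph{high degree vertex} is a vertex of degree at least three, and that a tree $T$ is a generalized star if it has at most one high degree vertex, while a pendent generalized star is a connected induced subgraph whose center $v$ is the unique high degree vertex in it, with all but one component of $T-v$ being pendent paths. Since by hypothesis $T$ is a non-path tree, $T$ has at least one high degree vertex, so either $T$ is a generalized star (exactly one high degree vertex) or $T$ has two or more high degree vertices, in which case we must locate a pendent generalized star.

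First I would preprocess the tree: read the edge list and compute the degree $\deg_T(x)$ of every vertex, which takes $O(n^2)$ operations as in the proof of Lemma~\ref{zfspoly}, and simultaneously record the set $D$ of high degree vertices (those with degree at least three). If $|D|\le 1$, then $T$ is a generalized star; if $D=\emptyset$ then $T$ is a path, contradicting the hypothesis, so $|D|=1$ and I return that single vertex as the center, all within $O(n)$ additional work. The substantive case is $|D|\ge 2$, where I must exhibit a pendent generalized star. The idea is to pick any high degree vertex $u\in D$ and examine the components of $T-u$: a component is a pendent path exactly when it contains no high degree vertex of $T$ and is a path, which is detectable by checking that every vertex in it has degree at most two in $T$. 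A high degree vertex $u$ is the center of a pendent generalized star precisely when all but exactly one of the components of $T-u$ are pendent paths.

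The key step is to \emph{find} such a $u$ efficiently. The plan is to iterate over the high degree vertices; for a candidate $u$, I perform a graph search (breadth- or depth-first) from each neighbor of $u$ into its component of $T-u$, halting each search as soon as it either exhausts the component (confirming a pendent path, provided no interior high degree vertex was seen) or encounters a second high degree vertex (marking that component as ``not a pendent path''). Counting the non-pendent-path components, $u$ is a valid center if and only if exactly one component fails to be a pendent path. Since $T$ is a tree, the total vertices explored across all component searches from a single $u$ is at most $n$, so testing one candidate costs $O(n)$ operations after the degree preprocessing, and testing all candidates costs $O(n^2)$. The existence of at least one valid $u$ is guaranteed by the cited result from \cite{FH} that every tree is a generalized star or contains a pendent generalized star, so the search succeeds and I return the center of the first valid candidate together with the identification of its pendent path components.

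The main obstacle I anticipate is the bookkeeping to stay within the $O(n^2)$ bound while correctly distinguishing pendent paths from non-path or high-degree-containing components, especially ensuring that a component is flagged correctly the moment a second high degree vertex is encountered so that no search runs longer than necessary. A naive implementation that re-traverses overlapping subtrees for every candidate could inflate the cost, so I would be careful that each candidate test is a clean $O(n)$ tree traversal. Once that accounting is in place, summing $O(n)$ over the at most $n$ high degree candidates gives the claimed $O(n^2)$ bound, completing the argument.
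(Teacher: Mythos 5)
Your proposal is correct and follows essentially the same approach as the paper's proof: precompute degrees/neighborhoods in $O(n^2)$, iterate over candidate centers of degree at least three, and for each candidate spend $O(n)$ operations identifying the components of $T-v$ and testing which ones are pendent paths via the precomputed degrees (a component fails exactly when it contains a vertex of degree at least three in $T$). Your early-halting searches and the separate handling of the case of a single high degree vertex are minor implementation variants of the paper's Algorithm \ref{gpscheck}, and both arguments rest on the same existence guarantee from \cite{FH}.
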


\begin{proof}
As in Lemma \ref{zfspoly}, we first compute the neighborhood $N(v)$ of all vertices $v \in T$ in $O(n^2)$ operations, recording both the neighborhoods and the degrees. Then we run Algorithm \ref{gpscheck} to find the center vertex of a generalized star or a pendent generalized star of $T$, assuming that $T$ is not a path. In  this algorithm, we check each vertex $v \in V(T)$ to see whether it is the center of a generalized star or a pendent generalized star until we find a center, after verifying that $T$ is not a path. We use a variable $k$ to count the number of non-path connected components of $T-v$, so we require that $k \leq 1$ in order for $v$ to be the center of a generalized star or a pendent generalized star.

Checking if there is a vertex in $T$ of degree at least $3$ takes $O(n)$ operations. Assuming there is, the outer for-loop runs at most $n$ times since $T$ has order $n$. The connected components of any graph $G = (V, E)$ can be found in $O(|V|+|E|)$ operations, so computing the connected components of $T-v$ takes $O(n)$ operations since $T$ has order $n$ and thus at most $n-1$ edges. We claim that the inner for-loop and its interior take at most $O(n)$ operations: Note that   the components $C$ partition the vertices of $T-v$.  For $w\in V(C)$,  $\deg_ {T-v}(w)=\deg_T(w) -1$ if $w\in N(v)$ and $\deg_ {T-v}(w)=\deg_T(w)$ if $w\not\in N(v)$; recall we recorded the degree and the neighborhood of each vertex in $T$. Thus finding the center of a generalized star or pendent generalized star of $T$ takes $O(n^2)$ operations.
\end{proof}

\begin{algorithm}
\begin{algorithmic} 
\caption{Find the center of a generalized star or a pendent generalized star of a non-path tree $T$}
\label{gpscheck}
\IF{every vertex in $T$ has degree at most $2$}
\STATE Return {$T$ does not have a generalized star or pendent generalized star.}
\ENDIF
\FOR{$v \in V(T)$ of degree at least $3$}
\STATE $k = 0$ (\# of non-pendent paths)
\STATE Compute the connected components $\C = \left\{C_1, \dots, C_j\right\}$ of $T-v$. 
\FOR{$X \in C$}
\IF{$X$ has a vertex of degree at least $3$ in $T$}
\STATE Increment $k$.
\STATE Record the component number of $X$ in $npp$.
\ENDIF
\ENDFOR

\IF{$k = 0$}
\STATE {Return $T$ has a generalized star with center $v$ and set $\C$ of components of $T-v$.}
\ENDIF

\IF{$k = 1$}
\STATE Return {$T$ has a pendent generalized star with center $v$, and set $\C$ of components of $T-v$, and number $npp$ of the component that is not a pendent path.}
\ENDIF
\ENDFOR 
\end{algorithmic}
\end{algorithm}
\begin{thm}\label{t:PC-leaf}
For trees $T$ of order $n$,  a minimum path cover of $T$ in which every path has a leaf of $T$ can be found  in $O(n^3)$ operations. 
\end{thm}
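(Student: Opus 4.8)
The plan is to turn the inductive existence argument of Theorem \ref{t:leaf-zfs} into an explicit recursive algorithm and to bound its running time using Algorithm \ref{gpscheck} and Lemma \ref{pgspoly}. The structural fact of \cite{FH}, recalled just before Theorem \ref{t:leaf-zfs}, is that every tree either is a generalized star or contains a pendent generalized star, and the proof of Theorem \ref{t:leaf-zfs} already shows how to assemble a leaf-containing minimum path cover of $T$ from such a cover of the smaller tree obtained by deleting a pendent generalized star. I would implement precisely this recursion, so that correctness (that the output is a minimum path cover and that every path contains a leaf of $T$) is inherited directly from Theorem \ref{t:leaf-zfs}, leaving only the complexity to establish.

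First I would handle the base case. If the current tree is a generalized star, I construct the required cover directly: if there is no high-degree vertex the tree is a path and forms its own cover; otherwise, with center $u$ of degree $d$, I route one path through $u$ using two of its legs (giving a path with two leaf endpoints) and take each of the remaining $d-2$ legs as its own path. This yields $d-1$ paths, which is minimum, and every path has a leaf; it costs $O(n)$ once degrees are known. For the recursive step on a non-generalized-star tree, I would call Algorithm \ref{gpscheck}, which by Lemma \ref{pgspoly} returns in $O(n^2)$ operations the center $u$ of a pendent generalized star, the set $\C$ of components of $T-u$, and the index of the unique non-pendent-path component $T'$. Writing the pendent path components as $R_1,\dots,R_k$ and letting $w$ be the neighbor of $u$ in $T'$, I recurse on $T'$ to obtain a minimum path cover with every path containing a leaf of $T'$, then locate in $O(n)$ time the path $P^{(1)}$ of that cover containing $w$ and attach $R$ exactly as in Theorem \ref{t:leaf-zfs}: if $w$ is interior to $P^{(1)}$, I add the new paths $R_1uR_2,R_3,\dots,R_k$ (valid since $\deg_T(u)\ge 3$ forces $k+1=\deg_T(u)\ge 3$, hence $k\ge 2$); if $w$ is an endpoint so that $P^{(1)}=Uw$, I replace $P^{(1)}$ by $UwuR_1$ and add $R_2,\dots,R_k$.

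Finally I would analyze the cost. After the one-time $O(n^2)$ precomputation of neighborhoods and degrees as in Lemmas \ref{zfspoly} and \ref{pgspoly}, each recursive step deletes the center $u$ together with at least one pendent path, hence removes at least two vertices, so there are $O(n)$ steps. Each step is dominated by the $O(n^2)$ invocation of Algorithm \ref{gpscheck}, together with $O(n)$ work to identify $P^{(1)}$, test whether $w$ is an endpoint, and perform the attachment, as well as recomputing the affected degrees for the next iteration. Summing over the $O(n)$ steps gives the claimed $O(n^3)$ bound.

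The main obstacle I anticipate is not conceptual but organizational: maintaining the partial path cover and the tree data structure consistently across the recursion, and in particular efficiently identifying the path $P^{(1)}$ that contains $w$ and determining whether $w$ is one of its endpoints, so that each attachment step stays within the $O(n)$ budget and the overall bound remains $O(n^3)$ rather than larger.
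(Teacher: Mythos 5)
Your proposal is correct and takes essentially the same approach as the paper's proof: both turn the induction of Theorem \ref{t:leaf-zfs} into a recursive algorithm that calls Algorithm \ref{gpscheck} (Lemma \ref{pgspoly}) once per step to find the center of a generalized star or pendent generalized star, handles the generalized-star base case directly, and performs the identical attachment of $R_1,\dots,R_k$ to the path containing $w$. The only difference is presentational: the paper bounds the cost by strong induction with explicit constants (at most $dn^2 + c(n-1)^3 + cn \le cn^3$ operations), while you count $O(n)$ recursive steps at $O(n^2)$ each, which yields the same $O(n^3)$ bound.
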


\begin{proof}
We can check if $T$ is a path in $O(n)$ operations. If it is, then  $T$  is its own path cover. For a non-path tree,  there exists a constant $d$ such that we can find the center vertex $v$ of a pendent generalized star or generalized star and the connected components of $T-v$ in at most $d n^2$ operations by Lemma \ref{pgspoly}. We prove by strong induction on $n$ that there exists a constant $c$ such that we can find a minimum path cover of $T$ in which every path has a leaf in at most $c n^3$ operations. A base case can be done in constant $c'$ time for some constant $c'$.  Let $c$ be a constant with $c > \max(c', d)$ for which it is possible to list the vertices of $G$ in at most $c n$ operations for any graph $G$ of order $n$.

Given any non-path tree $T$ of order $n$, we find the center vertex $v$ of a generalized star or a pendent generalized star of $T$ and the connected components of $T-v$ in at most $d n^2$ operations by Lemma \ref{pgspoly}.  If $v$ is returned as  the center of the generalized star $T$, then join the  first two  components with $v$ into one path. Take that path, plus each of the remaining components, to be the path cover.

So assume $v$ is returned as the center of a pendent generalized star of $T$.  As in the proof of Theorem \ref{t:leaf-zfs}, denote the pendent path components of $T-v$ by $R_1,R_2,\dots,R_k$, let $T'$ be the component of $T-v$ that is not a pendent path, and let $w$ be the neighbor of $v$ in $T$ that is in $T'$. Note that the algorithm in Lemma \ref{pgspoly} tells us which component of $T-v$ is not a pendent path. By the induction hypothesis, we can find a minimum path cover $\PC=\{P^{(1)},\dots,P^{(p)}\}$ of $T'$ in at most $c(n-1)^3$ operations in which every path has a leaf of $T'$. Let $P^{(1)}$ be the path in this cover that contains $w$.  If $w$ is not an endpoint of $P^{(1)}$, then $P^{(1)}$ has a leaf endpoint in $T$.  
Thus $\{P^{(1)},\dots,P^{(p)}\}\cup\{R_1vR_2,R_3,\dots,R_k\}$ is a minimum path cover of $T$ in which  every path has a leaf of $T$, and we computed this path cover in at most $d n^2 + c(n-1)^3 + c n \leq c n^3$ operations.
 If $w$ is an endpoint of $P^{(1)}$, then $P^{(1)}=Uw$ where $U$ is a nonempty subpath of $P^{(1)}$, so $\{P^{(2)},\dots,P^{(p)}\}\cup\{U w v R_1,R_2, R_3,\dots,R_k\}$ is a minimum path cover of $T$ in which  every path has a leaf of $T$.  Again we found this path cover in at most $d n^2 + c(n-1)^3 + c n \leq c n^3$ operations. Thus we can find a minimum path cover of $T$ in which every path has a leaf  of $T$ in $O(n^3)$ operations. 
\end{proof}

 Given a path cover in which each path contains a leaf of $T$, we can find  a minimum zero forcing set of $T$ consisting of all leaves by checking the degrees of the vertices of each path and choosing a leaf from each path.
 
\begin{cor}
For trees $T$ of order $n$, we can find a minimum zero forcing set of $T$ consisting of all leaves in $O(n^3)$ operations. 
\end{cor}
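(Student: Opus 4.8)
The plan is to reduce the problem to Theorem~\ref{t:PC-leaf}, which already provides the computational machinery we need. The corollary asserts that for a tree $T$ of order $n$, a minimum zero forcing set consisting entirely of leaves can be found in $O(n^3)$ operations. The bridge between path covers and zero forcing sets is the standard fact (recalled at the start of Section~\ref{s:forest}) that for a tree, selecting one endpoint from each path in a minimum path cover yields a minimum zero forcing set, and the endpoints of the chosen paths are exactly the vertices of the resulting set.

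First I would invoke Theorem~\ref{t:PC-leaf} to produce, in $O(n^3)$ operations, a minimum path cover $\PC=\{P^{(1)},\dots,P^{(z)}\}$ of $T$ in which every path $P^{(i)}$ contains a leaf of $T$. Next, for each path $P^{(i)}$ I would identify a leaf endpoint: a path in the cover containing a leaf of $T$ must have that leaf as one of its two endpoints, since a leaf has degree one and an interior vertex of an induced path has degree at least two within the path. To locate it, I would examine the (at most two) endpoints of each $P^{(i)}$ and check their degrees in $T$, choosing an endpoint of degree one. Collecting one such leaf from each of the $z$ paths gives a minimum zero forcing set of $T$ all of whose vertices are leaves.

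The remaining task is to account for the cost of this selection step and confirm it does not exceed the $O(n^3)$ bound. Computing the degree of every vertex of $T$ takes $O(n^2)$ operations (as in Lemma~\ref{zfspoly}, one pass over the edge list), and the total number of path endpoints across the cover is at most $2z \le 2n$, so examining endpoints and checking their degrees against the precomputed degree list adds only $O(n)$ further operations. Thus the dominant cost is the $O(n^3)$ from Theorem~\ref{t:PC-leaf}, and the overall procedure runs in $O(n^3)$ operations.

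I do not expect a genuine obstacle here, since the corollary is essentially a bookkeeping consequence of the theorem; the only point requiring a word of care is justifying that a path in the cover containing a leaf of $T$ actually has that leaf as an \emph{endpoint} (so that the leaf is eligible to be chosen as the zero forcing vertex for that path), which follows immediately from the degree-one property of leaves in an induced path.
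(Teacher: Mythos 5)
Your proof is correct and takes essentially the same approach as the paper: invoke Theorem \ref{t:PC-leaf} to obtain, in $O(n^3)$ operations, a minimum path cover in which every path contains a leaf of $T$, then select a leaf from each path by checking vertex degrees, which adds only lower-order cost. The only difference is that you make explicit the bookkeeping the paper leaves implicit (that the leaf must be an \emph{endpoint} of its path, and that the degree checks cost $O(n^2)$), which is a harmless elaboration of the same argument.
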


Let $\ZFS(G)$ denote the function which takes a graph $G$ as input and outputs the list of minimum zero forcing sets of $G$. The next result shows that computing $\ZFS(G)$ takes at worst $2^{\Theta(n)}$ operations for graphs $G$ of order $n$. In particular, there are trees $T$ of order $n$ for which computing $\ZFS(T)$ requires $2^{\Omega(n)}$ operations. We use the next algorithm.

\begin{algorithm}

\begin{algorithmic}
\caption{ Compute $\Z(G)$ and  $\ZFS(G)$}
\label{compzfs}
\STATE Let $k = 1$, $ZFS=\emptyset$, and {\tt Size = False}. 
\WHILE{{\tt Size = False}}
\FOR{every $S \subseteq V(G)$ of size $k$}
\STATE Determine whether $S$ is a zero forcing set of $G$. 
\IF{$S$ is a zero forcing set of $G$}
\STATE {{\tt Size = True}}.  (A zero forcing set is found so $\Z(G)=k$.) 
\STATE $ZFS = ZFS\cup\{S\}$.
\ENDIF
\ENDFOR
\IF{{\tt Size = True}}
\STATE Return $k$ and $ZFS$.
\ENDIF
\STATE $k=k+1$
\ENDWHILE
\end{algorithmic}
\end{algorithm}

\begin{prop}
For graphs $G$ of order $n$, computing $\ZFS(G)$ takes $2^{\Theta(n)}$ operations in the worst case.\label{exptime}
\end{prop}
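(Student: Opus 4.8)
The plan is to prove the two matching bounds that together constitute $2^{\Theta(n)}$: an upper bound of $2^{O(n)}$ operations valid for every graph of order $n$, and a lower bound of $2^{\Omega(n)}$ operations forced by a particular family of base graphs. The upper bound certifies that Algorithm \ref{compzfs} never does worse than $2^{O(n)}$, while the lower bound exhibits instances on which any correct algorithm must spend $2^{\Omega(n)}$ operations.

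For the upper bound, I would analyze Algorithm \ref{compzfs} directly. The algorithm examines subsets $S\subseteq V(G)$ in order of increasing size $k=1,2,\dots$ and halts at the first size $k$ that contains a zero forcing set, namely $k=\Z(G)$. Consequently the number of subsets it ever inspects is at most $\sum_{k=0}^{n}\binom{n}{k}=2^{n}$. By Lemma \ref{zfspoly}, deciding whether any one subset is a zero forcing set costs $O(n^{3})$ operations, and appending a found set to the output list is cheaper still. Multiplying, the total cost is $O(2^{n}n^{3})=2^{O(n)}$, since $n^{3}=2^{O(\log n)}$.

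For the lower bound, I would invoke the tree family $G_{d}$ of Proposition \ref{dcube}, obtained from $P_{d}$ by attaching two leaves to each vertex, so that $|V(G_{d})|=3d$ and $\zfg(G_{d})=Q_{d}$. Writing $n=3d$, the number of vertices of $\zfg(G_{d})$, i.e.\ the number of minimum zero forcing sets of $G_{d}$, is exactly $|V(Q_{d})|=2^{d}=2^{n/3}$. Since by definition $\ZFS(G_{d})$ must list all $2^{n/3}$ of these sets, any correct algorithm must write at least $2^{n/3}$ sets to its output and therefore performs $2^{\Omega(n)}$ operations, regardless of how it is implemented. Combining with the upper bound gives $2^{\Theta(n)}$.

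The upper-bound half is routine bookkeeping. The only real content lies in the lower bound, and the key point is to recognize that it is an \emph{output-size} argument: no algorithmic ingenuity can beat the sheer number of objects that must be enumerated, and Proposition \ref{dcube} already supplies a family whose minimum zero forcing sets are exponentially numerous in the order. The one detail to state carefully is the constant in the exponent (here $1/3$, because each path vertex of $G_{d}$ carries two leaves), but any fixed positive constant suffices to conclude $2^{\Omega(n)}$.
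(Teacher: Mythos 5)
Your proposal is correct and follows essentially the same route as the paper: the lower bound comes from the output-size argument applied to the trees of Proposition \ref{dcube} (exponentially many minimum zero forcing sets), and the upper bound comes from brute-force enumeration via Algorithm \ref{compzfs} together with the polynomial-time membership check of Lemma \ref{zfspoly}. Your write-up is in fact slightly more careful than the paper's about keeping the graph order $n=3d$ distinct from the path parameter $d$, making the exponent constant $1/3$ explicit.
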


\begin{proof}
First we prove the lower bound. Proposition \ref{dcube} shows that there are trees $G_n$ of order $3n$ for which $\zfg(T)$ has order $2^n$. Thus the list of minimum zero forcing sets of $G_n$ has length $2^n$, so it takes $\Omega(2^n)$ operations to output the list. 

For the upper bound, we use the brute-force Algorithm \ref{compzfs} to generate $\ZFS(G)$.
Algorithm  \ref{compzfs} takes $2^{O(n)}$ operations, since there are $2^n$ subsets of $V(G)$, and checking whether a subset $S$ is a zero forcing set takes polynomial time in $n$ by Lemma \ref{zfspoly}. Thus $\ZFS(G)$ can be computed in $2^{O(n)}$ operations.
\end{proof}

As a corollary, we note that computing the zero forcing graph for graphs of order $n$ also takes $2^{\Theta(n)}$ operations, since we can compute $\zfg(G)$ in $2^{O(n)}$ operations from $\ZFS(G)$. 

\begin{cor}
For graphs $G$ of order $n$, computing $\zfg(G)$ takes $2^{\Theta(n)}$ operations in the worst case.
\end{cor}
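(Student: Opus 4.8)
The plan is to reduce the corollary directly to the two-sided bound already established for $\ZFS(G)$ in Proposition \ref{exptime}, treating $\zfg(G)$ as a derived object. The key observation is that the vertex set of $\zfg(G)$ is exactly $\ZFS(G)$, so once we have the list of minimum zero forcing sets we need only determine the adjacencies.

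For the upper bound, I would first invoke Proposition \ref{exptime} to compute $\ZFS(G)$ in $2^{O(n)}$ operations; denote the resulting list by $B_1,\dots,B_m$, where $m=|V(\zfg(G))|\le 2^n$. To build the edge set of $\zfg(G)$, compare every pair $(B_i,B_j)$ and test whether $|B_i\triangle B_j|=2$, adding an edge in $\zfg(G)$ exactly when this holds. There are at most $\binom{m}{2}\le \binom{2^n}{2} < 2^{2n}$ pairs, and computing the symmetric difference of two subsets of $V(G)$ and checking its size takes $O(n)$ operations each. Hence the adjacency computation costs at most $O(n\, 2^{2n})=2^{O(n)}$ operations, and adding this to the $2^{O(n)}$ cost of producing $\ZFS(G)$ keeps the total at $2^{O(n)}$.

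For the lower bound, I would again use Proposition \ref{dcube}: the tree $G_n$ of order $3n$ has $\zfg(G_n)$ equal to the $n$-dimensional hypercube, which has $2^n$ vertices. Since any algorithm that outputs $\zfg(G_n)$ must in particular list its $2^n$ vertices, the output alone requires $\Omega(2^n)=2^{\Omega(n')}$ operations for a graph of order $n'=3n$. Combining the matching upper and lower bounds gives $2^{\Theta(n)}$ in the worst case.

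There is essentially no hard step here; the corollary is a bookkeeping consequence of Proposition \ref{exptime} together with the cheap $O(n\,2^{2n})$ pairwise adjacency test. The only point requiring any care is confirming that the pairwise comparison does not blow the exponent past linear in the exponent of $2$, i.e.\ verifying that $2^{2n}$ is still $2^{O(n)}$, which is immediate. I would state the argument briefly since the substance was already carried by the construction in Proposition \ref{dcube} and the brute-force enumeration in Algorithm \ref{compzfs}.
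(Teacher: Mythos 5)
Your proposal is correct and follows essentially the same route as the paper: the paper's own (one-sentence) justification is that $\zfg(G)$ can be computed from $\ZFS(G)$ in $2^{O(n)}$ operations, with the lower bound inherited from the $2^{\Omega(n)}$-size output forced by the trees of Proposition \ref{dcube}, exactly as you argue. Your explicit pairwise symmetric-difference test merely spells out the adjacency computation the paper leaves implicit, so there is nothing substantive to add.
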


For trees $T$ of order $n$, we obtain the stronger result that computing all minimum path covers of $T$ takes $2^{\Theta(n)}$ operations in the worst case. This gives an alternative method for computing $\zfg(T)$ in $2^{O(n)}$ operations, since $\ZFS(T)$ can be computed from a list of all minimum path covers of $T$ in $2^{O(n)}$ operations. 

\begin{prop}
For trees $T$ of order $n$, computing a list of all minimum path covers of $T$ takes $2^{\Theta(n)}$ operations in the worst case. \label{exptimepath}
\end{prop}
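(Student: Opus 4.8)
The plan is to prove matching bounds. For the upper bound I would give a brute-force procedure running in $2^{O(n)}$ operations, and for the lower bound I would exhibit a family of trees having $2^{\Omega(n)}$ minimum path covers, so that merely writing the output list already forces $2^{\Omega(n)}$ operations. The conceptual tool for both directions is the correspondence between path covers of a tree and edge subsets: in a tree $T$ on $n$ vertices, a path cover is the same as a spanning subgraph in which every component is a path, equivalently a subset $F\subseteq E(T)$ with every vertex of degree at most $2$ (acyclicity is automatic in a tree, and every such connected piece is an induced path). Since the corresponding cover has $n-|F|$ paths, and $\Z(T)=\PCN(T)$, the minimum path covers correspond exactly to the \emph{maximum-size} such edge subsets $F$. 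This reduces everything to counting/enumerating maximum degree-$\le 2$ edge subsets.

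\textbf{Upper bound.} I would enumerate all $2^{n-1}$ subsets of $E(T)$; for each, check in polynomial time (a routine like the neighborhood computation in Lemma \ref{zfspoly}) whether every vertex has degree at most $2$, and record the number of path components; then retain those attaining the minimum number of components, i.e. $\Z(T)$. Each retained subset is a distinct minimum path cover, and the whole procedure uses $2^{n-1}\cdot\mathrm{poly}(n)=2^{O(n)}$ operations.

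\textbf{Lower bound.} The obstacle here is that the families already constructed do \emph{not} work: the trees $G_d$ of Proposition \ref{dcube} (and combs) have exponentially or polynomially many zero forcing sets but essentially one minimum path cover, because a single high-degree vertex contributes only a polynomial factor, and two \emph{adjacent} high-degree vertices can be repaired simultaneously by deleting their one shared edge, which collapses the multiplicity. The cure is a tree whose multiplicity lives in the path covers themselves. I would take the caterpillar $T_k$ with spine $s_1 s_2\cdots s_{2k+1}$ and one pendent leaf attached to each odd-indexed spine vertex. The interior odd vertices $s_3,s_5,\dots,s_{2k-1}$ each have degree $3$, and they are pairwise nonadjacent (separated by degree-$2$ spacers), so their incident edge sets are pairwise disjoint. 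Hence a maximum degree-$\le 2$ edge subset must delete exactly one of the three edges at each such vertex, every one of the three local choices is valid, and the choices at different branch vertices are independent and simply multiply. This gives $3^{k-1}$ maximum edge subsets, hence $3^{k-1}$ minimum path covers, while $|V(T_k)|=3k+2$; so $T_k$ has $2^{\Omega(n)}$ minimum path covers and listing them takes $2^{\Omega(n)}$ operations. (This parallels the lower bound of Proposition \ref{exptime}, but now the blow-up is in the path covers rather than only in $\ZFS$.)

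\textbf{Main obstacle.} The hard part is the lower-bound construction, not the calculation: one must choose a tree in which the exponential multiplicity genuinely appears among the minimum path covers. The key verification is that spacing the degree-$3$ branch vertices apart—so that no single deleted edge repairs two of them—keeps the local three-way choices independent, so the count multiplies to $3^{k-1}$ instead of collapsing to $1$ as it does for the comb. Once this independence is established via the maximum degree-$\le 2$ edge-subset characterization, both bounds follow and combine to $2^{\Theta(n)}$.
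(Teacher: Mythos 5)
Your proposal is correct, and both halves take a genuinely different route from the paper. For the lower bound, the paper uses the tree $H_n$ obtained from $P_n$ by adding \emph{three} leaves to every spine vertex: every minimum path cover is forced to consist of one order-$3$ path per spine vertex (the vertex plus two of its three leaves) and one singleton leaf, giving $3^n$ minimum path covers on $4n$ vertices. Your spaced caterpillar achieves the same effect by a different mechanism—isolating the degree-$3$ vertices so that their three-way repair choices are independent—and your verification via the maximum degree-$\le 2$ edge-subset characterization is sound (in a tree every path subgraph is induced, each component with $v$ vertices contributes $v-1$ edges, so the cover has $n-|F|$ paths and minimum covers are exactly maximum admissible $F$; disjointness of the incident edge sets at the $k-1$ branch vertices forces exactly one deletion per branch vertex, giving exactly $3^{k-1}$ maximum subsets). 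Your diagnosis that the trees $G_d$ of Proposition \ref{dcube} fail here is also correct: the paper itself notes $G_d$ has a unique minimum path cover, which is precisely why it introduces $H_n$. For the upper bound, the paper does \emph{not} use brute force over edge subsets; it runs a memoized recursion over all $2^{O(n)}$ subtrees of $T$, reusing the Class 1/2/3 decomposition from the proof of Theorem \ref{tree_con} to assemble the minimum path covers of each subtree from those of smaller subtrees. Your enumeration of all $2^{n-1}$ edge subsets, filtered by the degree-$\le 2$ condition and by minimality of the component count, is simpler, self-contained, and equally sufficient for a $2^{\Theta(n)}$ bound; what the paper's heavier machinery buys is mainly economy of exposition given that Theorem \ref{tree_con} has already been proved, whereas your edge-subset viewpoint makes both the algorithm and the counting argument transparent with no recursion or bookkeeping.
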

\begin{proof}
First we prove the lower bound. Let $H_n$ be the graph obtained from $P_n$ by adding three leaves to every vertex. Any minimum path cover of $H_n$ has all paths of length $1$ or $3$, with the paths of length $1$ all added leaves and the paths of length $3$ all having a vertex from the path $P_n$ as the middle vertex and two added leaves as the outer vertices. So the tree $H_n$ has order $4n$ and $3^n$ minimum path covers. Thus it takes $\Omega(3^n)$ operations to output a list of minimum path covers for $H_n$.

For the upper bound, we combine judicious record-keeping with the proof method in Theorem \ref{tree_con}. The input is a tree $T$ of order $n$. We check if $T$ is a path in polynomial time in $n$. If so, we output $T$.

If $T$ is not a path, then we list all subtrees of $T$ in order of size. This takes $2^{O(n)}$ operations, since there are $2^n$ subsets $S$ of vertices of $T$, and checking whether $S$ is connected takes polynomial time in $n$. 

Going through the list in order, we compute a list of minimum path covers for each subtree $T'$ of $T$ and we store each list in memory. If $T'$ is a path, then we store $T'$ in memory. If $T'$ is not a path, then as in the proof of Theorem \ref{tree_con}, we note that there exists a vertex $u \in V(T')$ such that the graph obtained from $T'$ by removing $u$ has at least two connected components that are paths of order at least $1$. We choose such a vertex and label these paths $v_1 \dots v_j$ and $w_1 \dots w_k$ for $j, k \geq 1$ where $\dist(v_i, u) = i$ for each $1 \leq i \leq j$ and $\dist(w_i, u) = i$ for each $1 \leq i \leq k$. 

As in Theorem \ref{tree_con}, we can group all minimum path covers of $T'$ into three classes, where overlap is allowed between classes: Class 1 consists of minimum path covers containing $v_1 \dots v_j$, Class 2 consists of minimum path covers containing $w_1 \dots w_k$, and Class 3 consists of minimum path covers containing $v_j \dots v_1 u w_1 \dots w_k$.

Every minimum path cover of $T'$ is in at least one of the classes 1, 2, or 3, though some classes may be empty. Let $T_0$ be the graph obtained from $T'$ by removing $v_1 \dots v_j$, let $T_1$ be the graph obtained from $T'$ by removing $w_1 \dots w_k$, and let $T_2$ be the graph obtained from $T'$ by removing $v_j \dots v_1 u w_1 \dots w_k$. $T_0$ and $T_1$ are trees, so they are subtrees of $T$, and their order is less than $T'$, so we have already computed and stored in memory lists of minimum path covers for both $T_0$ and $T_1$. $T_2$ is a forest of order less than $T'$, so its connected components are subtrees of $T$ with order less than $T'$, so we can compute the minimum path covers of $T_2$ in $2^{O(n)}$ operations from the lists of minimum path covers stored in memory.

Using lists of minimum path covers for $T_0$, $T_1$, and $T_2$ from memory, we compute a list $L$ of minimum path covers for $T'$. We initialize $L$ as an empty list. Given a list of minimum path covers $S_0$ of $T_0$, we add to $L$ each of the path covers $\left\{s \cup \left\{ v_1 \dots v_j\right\}: s \in S_0 \right\}$. These path covers are not necessarily minimum path covers of $T'$, but they are the path covers in Class 1 if they are minimum, and Class 1 is empty if they are not minimum. Similarly given a list of minimum path covers $S_1$ of $T_1$, we add to $L$ the path covers $\left\{s \cup \left\{ w_1 \dots w_k\right\}: s \in S_1 \right\}$. These are the path covers in Class 2, if they are minimum path covers of $T'$. Similarly given a list of minimum path covers $S_2$ of $T_2$, we add to $L$ the path covers $\left\{s \cup \left\{ v_j \dots v_1 u w_1 \dots w_k\right\}: s \in S_2 \right\}$. These are the path covers in Class 3. Then we go through $L$, determine the minimum size of the path covers in $L$, remove any path covers that do not have minimum size, and remove any duplicates. Forming this new list takes $2^{O(n)}$ operations, and we store it in memory as the list of minimum path covers of $T'$.

At the end of the process, we compute a list of all minimum path covers of $T$ after computing a list of all minimum path covers of $T'$ for every proper subtree $T'$ of $T$. Since there are at most $2^n$ subtrees $T'$ of $T$ and computing a list of all minimum path covers of $T'$ takes $2^{O(n)}$ operations for each subtree $T'$ of $T$, computing a list of all minimum path covers of $T$ takes $2^{O(n)}$ operations.
\end{proof}

While we have computational results about the complexity of finding the set of all minimum zero forcing sets and the zero forcing graph, many fundamental computational questions about zero forcing graphs remain open. For example, is there an efficient algorithm to determine whether two minimum zero forcing sets are in the same connected component of the zero forcing graph? Similarly, is there an efficient algorithm to compute the distance between minimum zero forcing sets in the zero forcing graph?


\section{Acknowledgment}

The authors thank Jamie Haddock for asking the question that led to Theorem \ref{zfsleaf}.



\begin{thebibliography}{20}


\bibitem{AIM} AIM Minimum Rank -- Special Graphs Work Group  
{\small (F.~Barioli, W.~Barrett,  S.~Butler,    S.~Cioaba, D.~Cvetkovi\'c,   S.~Fallat, C.~Godsil,  
W.~Haemers,   L.~Hogben,  R.~Mikkelson,  S.~Narayan,  O.~Pryporova,   
I.~Sciriha,  W.~So,   D.~Stevanovi\'c,  H.~van der Holst, K.~Vander Meulen, and A.~Wangsness Wehe}).  
Zero forcing sets and the minimum rank   of graphs.   {\em Linear Algebra Appl.},  428: 1628--1648, 2008.

\bibitem{BBFHHVS} F.~Barioli, W.~Barrett, S.~Fallat, H.~Hall, L.~Hogben, H.~van der Holst, and B.~Shader. Zero forcing parameters and minimum rank problems. {\em Linear Algebra Appl.},  433: 401--411, 2010.



\bibitem{Beier2016ClassifyingCG}
J.~Beier, J.~Fierson, R.~Haas, H.~Russell, and K.~Shavo.
\newblock Classifying coloring graphs.
\newblock {\em Discrete Math.}, 339: 2100--2112, 2016.

 \bibitem{reuf15} K.~Benson, D.~Ferrero, M.~Flagg, V.~Furst, L.~Hogben, V.~Vasilevska, and B.~Wissman. Zero forcing and power domination for graph products.  {\em Australas. J . Combin.} {70}: 221--235, 2018.

 \bibitem{graphinfect} D.~Burgarth and V.~Giovannetti. 
Full control by locally induced relaxation. {\em Phys. Rev. Lett.} 
PRL 99, 100501, 2007.
 
\bibitem{cereceda2008connectedness}
L.~Cereceda, J.~Van Den Heuvel, and M.~Johnson.
\newblock Connectedness of the graph of vertex-colourings.
\newblock {\em Discrete Math.}, 308(5):913--919, 2008.

%

\bibitem{choo2011gray}
K.~Choo and G.~MacGillivray.
\newblock Gray code numbers for graphs.
\newblock {\em Ars Math. Contemp.}, 4: 125--139, 2011.

 \bibitem{Deanetal}   N.~Dean, A.~Ilic, I.~Ramirez, J.~Shen, and K.~Tian. On the power dominating sets of hypercubes. {\em  IEEE 14th International Conference on Computational Science and Engineering (CSE)},  488--491, 2011. 

\bibitem{Diestel}  R.~Diestel, {\em Graph Theory}, 5th edition. Springer, Berlin, 2017.


\bibitem{dyer2006randomly}
M.~Dyer, A.~Flaxman, A.~Frieze, and E.~Vigoda.
\newblock Randomly coloring sparse random graphs with fewer colors than the
  maximum degree.
\newblock {\em  Random Structures Algorithms}, 29: 450--465, 2006.


\bibitem{FH}	S.~Fallat and L.~Hogben.
The minimum rank of symmetric matrices described by a graph: A survey.   
{\em Linear Algebra Appl.} 426: 558--582, 2007.  

 \bibitem{HLA2}	S.~Fallat and L.~Hogben. Minimum Rank, Maximum Nullity, and Zero Forcing Number of Graphs.  In \emph{Handbook of Linear Algebra},  2nd ed., CRC Press, Boca Raton, 2014. 

\bibitem{haas2011canonical}
R.~Haas.
\newblock The canonical coloring graph of trees and cycles.
\newblock {\em Ars Math. Contemp.}, 5: 149--157, 2011.

\bibitem{jerrum1995very}
M.~Jerrum.
\newblock A very simple algorithm for estimating the number of k-colorings of a
  low-degree graph.
\newblock {\em Random Structures Algorithms}, 7: 157--165, 1995.

\bibitem{mohar2007kempe}
B.~Mohar.
\newblock {\em Kempe equivalence of colorings}.
\newblock Springer, 2007.

\bibitem{molloy2004glauber}
M.~Molloy.
\newblock The Glauber dynamics on colorings of a graph with high girth and
  maximum degree.
\newblock {\em  SIAM J. Comput.}, 33: 721--737, 2004.

\bibitem{Mynhardt-survey}
C.~Mynhardt and S~Nasserasr.
\newblock Reconfiguration of colourings and dominating sets in graphs.
\newblock In {\em 50 Years of Combinatorics, Graph Theory, and Computing}, CRC
  Press, 2020.

\bibitem{Nishimura-survey}
N.~Nishimura.
\newblock Introduction to reconfiguration.
\newblock {\em Algorithms (Basel)}, 11: Paper No. 52, 25 pp., 2018.

\bibitem{vigoda2001improved}
E.~Vigoda.
 Improved bounds for sampling colorings. Probabilistic techniques in equilibrium and nonequilibrium statistical physics.  
\newblock {\em J. Math. Phys.}, 41: 1555--1569, 2000.












\end{thebibliography}
\end{document}